\definecolor{medblue}{rgb}{0.09,0.32,0.44} %22-84-113
\def\clap#1{\hbox to 0pt{\hss#1\hss}}
\newtheorem{thm}{Theorem}[section]
\newcommand{\bthm}{\begin{thm}} \newcommand{\ethm}{\end{thm}}
\newtheorem{prop}[thm]{Proposition}
\newcommand{\bprp}{\begin{prop}} \newcommand{\eprp}{\end{prop}}
\newtheorem{fact}[thm]{Fact}
\newcommand{\bfct}{\begin{fact}} \newcommand{\efct}{\end{fact}}
\newtheorem{prob}[thm]{Problem}
\newcommand{\bprb}{\begin{prob}} \newcommand{\eprb}{\end{prob}}
\newtheorem{lem}[thm]{Lemma}
\newcommand{\blem}{\begin{lem}} \newcommand{\elem}{\end{lem}}
\newtheorem{claim}[thm]{Claim}
\newcommand{\bclm}{\begin{claim}} \newcommand{\eclm}{\end{claim}}
\newtheorem{corollary}[thm]{Corollary}
\newcommand{\bcor}{\begin{corollary}} \newcommand{\ecor}{\end{corollary}}
\newtheorem{conj}[thm]{Conjecture}
\newcommand{\bcnj}{\begin{conj}} \newcommand{\ecnj}{\end{conj}}
\theoremstyle{definition}
\newtheorem{definition}[thm]{Definition}
\newcommand{\bdfn}{\begin{definition}} \newcommand{\edfn}{\end{definition}}
\theoremstyle{remark}
\newtheorem{rem}[thm]{Remark}
\newcommand{\brem}{\begin{rem}} \newcommand{\erem}{\end{rem}}
\newtheorem{cnv}[thm]{Convention}
\newcommand{\bcnv}{\begin{cnv}} \newcommand{\ecnv}{\end{cnv}}
\newtheorem{exam}[thm]{Example}
\newcommand{\bexm}{\begin{exam}} \newcommand{\eexm}{\end{exam}}
\newcommand{\bpf}{\begin{proof}} \newcommand{\epf}{\end{proof}}
\newtheorem{exer}[thm]{Exercise}
\newcommand{\bexer}{\begin{exer}} \newcommand{\eexer}{\end{exer}}
\newcommand{\R}{{\mathbb R}}
\newcommand{\Z}{{\mathbb Z}}
\newcommand{\N}{{\mathbb N}}
\renewcommand{\Pr}{\mathbb{P}}
\newcommand{\E}{\mathbb{E}}
\newcommand{\Var}{\textrm{Var}}
\renewcommand{\a}{\alpha}
\newcommand{\om}{\omega}
\newcommand{\Norm}{\mathcal{N}}
\newcommand{\eps}{\varepsilon}
\newcommand{\ov}{\overline}
\newcommand{\half}{\frac{1}{2}}
\newcommand{\floor}[1]{{\lfloor#1\rfloor}}
\newcommand{\ceil}[1]{{\lceil#1\rceil}}
\newcommand\ip[1]{\langle{#1}\rangle}
\title{Excited random walk with periodic cookies}
\author{
Gady Kozma
\and
Tal Orenshtein
\and
Igor Shinkar
}
\begin{document}

\begin{abstract}
    In this paper we consider an excited random walk (ERW) on $\Z$ in
    identically piled periodic environment. This is a discrete time process
    on $\Z$ defined by parameters $(p_1,\dots p_M) \in [0,1]^M$ for some
    positive integer $M$, where the walker upon the $i^\textrm{th}$ visit to $z \in \Z$
    moves to $z+1$ with probability $p_{i\pmod M}$, and moves to $z-1$ with
    probability $1-p_{i \pmod M}$.
    We give an explicit formula in terms of the parameters $(p_1,\dots,p_M)$
    which determines whether the walk is recurrent, transient to the left,
    or transient to the right.
    In particular, in the case that
$\frac{1}{M}\sum_{i=1}^{M}p_{i}=\frac 12$ all behaviors are possible,
    and may depend on the order of the $p_i$.
Our framework allows us to reprove some known results on ERW and branching processes with migration with no
additional effort.
\end{abstract}

\maketitle

\thanks{\textit{2000
Mathematics Subject Classification.} 60K35, 60J85 }

\thanks{\textit{Keywords:}\quad
excited random walk,
cookie walk,
recurrence,
transience,
Bessel process,
countable Markov chains with asymptotically zero drift,
Lyapunov function,
branching process with migration.}

\newpage

\hypersetup{linkcolor=black}
\tableofcontents
\hypersetup{linkcolor=medblue}

\newpage
%%%%%%%%%%%%%%%%%%%%%%%%%%%%%%%%%%%%%%%%%%%%%%%%%%%%%%%%%%%%%%%%%%%%%%%%%%%%%%%
\section{Introduction}\label{sec:intro}
%%%%%%%%%%%%%%%%%%%%%%%%%%%%%%%%%%%%%%%%%%%%%%%%%%%%%%%%%%%%%%%%%%%%%%%%%%%%%%%

Excited Random Walk (ERW), also known as the Cookie Walk, was
introduced by Benjamini and Wilson~\cite{benjamini2003excited} as a non
Markovian local perturbation of simple random walk on $\Z^d$, $d\ge 1$.
In this model we have a stack of cookies placed on each vertex of the
lattice, and each cookie encodes a probability distribution on the next
step of the walker (also known as the cookie monster). In each round the walker eats
the top cookie in the stack in her current position, and makes a random
step according to the probability distribution encoded by this cookie.
In their paper Benjamini and Wilson~\cite{benjamini2003excited} showed
that by placing a single biased cookie in each vertex the walk is recurrent
in $\Z$ and is transient in $\Z^d$ for all $d \geq 2$. The case $d=1$
has been later generalized by Zerner~\cite{zerner2005multi} by
placing more biased cookies in each vertex of the lattice. There has
been a lot of work done on this model, in both deterministic and
random cookie environments. For more background see the recent survey
of Kosygina and Zerner \cite{kosygina2012excited} and the references therein.

In this paper we shall discuss only the case $d=1$. ERW is a discrete
time stochastic process $X=(X_n)_{n \geq 0}$ on the integer lattice $\Z$.
The process $X$ in the cookie environment $\om \in [0,1]^{\Z \times \N}$ is
initiated at some $X_0 = x \in \Z$. If at time $n$ the walker is in
position $y$, and this is her $j^\textrm{th}$ visit to $y$, then she moves to $y+1$
with probability $\om(y,j)$, and moves to $y-1$ with probability $1-\om(y,j)$.

%Formally, it is defined by the probability measure $\Pr_{x,\om}$,
%where $\Pr_{x,\om}(X_0=x) = 1$, and for $n \geq 0$ it holds that
%\[
%\Pr_{x,\om}[X_{n+1} = X_{n}+1\ |\ X_{0},X_{1},\dots,X_{n}]
%=
%\om(X_{n},\#\{k\le n:X_{k}=X_{n}\})
%\]
%and
%\[
%\Pr_{x,\om}[X_{n+1}=X_{n}-1\ |\ X_{0},X_{1},\dots,X_{n}]
%=
% 1 - \Pr_{x,\om}[X_{n+1} = X_{n}+1\ |\ X_{0},X_{1},\dots,X_{n}].
%\]
%We usually omit the subscript in $\Pr_{x,\om}$,
%and write $\Pr$ instead when it is clear from context.

In this paper we shall assume that the initial position $x$ of the process
is $0$, and that the stacks in the cookie environment $\om$ are
\emph{identically piled}, that is $\om(x,i)=\om(0,i)$ for all $x \in \Z$
and $i \in \N$. For a vector $p \in [0,1]^{\N}$ we shall write $\om(p)$ to
denote the identically piled cookie environment $\om$ where $\om(x,i)=p_i$
for all $x \in \Z$ and $i \in \N$.
In this case the ERW mechanism has a simple form, namely
$\Pr[X_0 = 0] = 1$ and
\[
    \Pr[X_{n+1} = X_n+1\ |\ L_n(X_n)=j] = 1-\Pr[X_{n+1} = X_n-1\ |\ L_n(X_n)=j] = p_j,
\]
where $L_n(x)=\#\{k \leq n : X_k=x \}$ is the number of visits to $x\in\Z$ in $n$ steps of the walk.
With a minor abuse of notation we say that $p$ is a cookie environment,
when actually referring to the identically piled cookie environment $\om(p)$.
Next, we introduce some definitions for cookie environments.
\begin{definition}
    Let $p \in [0,1]^{\N}$ be a cookie environment.
    \begin{itemize}
    \item
    The environment $p$ is called \emph{elliptic} if $p\in (0,1)^{\N}$.
    \item
    The environment $p$ is called \emph{non-degenerate}
    if $\sum_{i=1}^\infty p_i=\sum_{i=1}^\infty (1-p_i)=\infty$
    \item
    The environment $p$ is called \emph{positive} if $p\in[\half,1]^\N$.
    \item
    The environment $p$ is called \emph{bounded} if there is some $M \in \N$
    such that $p_i=\half$ for all $i>M$
    \item
    The environment $p$ is called \emph{periodic} if for some $M \in \N$
    it holds that $p_i=p_{i+M}$ for all $i \in \N$.
    We denote such an environment by $\om(p_1,\dots,p_M)$.
    \end{itemize}
\end{definition}

%%%%%%%%%%%%%%%%%%%%%%%%%%%%%%%%%%%%%%%%%%%%%%%%%%%%%%%%%%%%%%%%%%%%%%%%%%%%%%%
\subsection{Our results}\label{subsec:our results}
%%%%%%%%%%%%%%%%%%%%%%%%%%%%%%%%%%%%%%%%%%%%%%%%%%%%%%%%%%%%%%%%%%%%%%%%%%%%%%%

Consider an ERW in an elliptic periodic environment. That is, the environment is defined by parameters
$(p_1,\dots p_M) \in (0,1)^M$ for some $M \in \N$, where the walker upon
the $i^\textrm{th}$ visit to $z \in \Z$ moves to $z+1$ with probability
$p_{i \pmod M}$, and moves to $z-1$ with probability $1-p_{i \pmod M}$
(where we identify $p_0$ with $p_M$). Our main result is the following.

\begin{thm}[Transience criterion for periodic environments]
\label{thm:periodic env}
    Let $(p_1,\dots,p_M) \in (0,1)^M$ for some $M \in \N$, and let
    $\ov{p} = \frac{1}{M}\sum_{i=1}^M p_i$ be the average of the $p_i$'s.
    Let $X = (X_n)_{n \geq 0}$ be a ERW in the periodic environment
    $\om = \om(p_1,\dots,p_M)$.
    \begin{enumerate}
    \item
    If $\ov{p}>\half$, then $\Pr$-a.s. $X_n \to +\infty$ as $n \to \infty$.
    \item
    If $\ov{p}<\half$ then $\Pr$-a.s. $X_n \to -\infty$ as $n \to \infty$.
    \item
    Suppose that $\ov{p} = \half$, and let
    \begin{equation}\label{eq:periodic theta}
        \theta(p_1,\dots,p_M)=
        \frac{\sum_{i=1}^M \delta_i (1-p_{i})}
            {4 \sum_{j=1}^M p_j(1-p_j)},
    \end{equation} where $\delta_i=\sum_{j=1}^{i} (2p_j - 1)$.
        \begin{itemize}
         \item  If $\theta(p_1,\dots,p_M)>1$, then $\Pr$-a.s. $X_n\to+\infty$.
         \item  If $\theta(1-p_1,\dots,1-p_M)>1$, then $\Pr$-a.s. $X_n\to-\infty$.
         \item  If both $\theta(p_1,\dots,p_M) \leq 1$ and $\theta(1-p_1,\dots,1-p_M) \leq 1$,
         then $\Pr$-a.s. $X_n=0 \mbox{ infinitely often.}$
        \end{itemize}
    \end{enumerate}
\end{thm}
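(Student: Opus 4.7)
My plan is to pass from the ERW to a backward Markov chain of downcrossings (Ray--Knight style), which for an identically piled periodic environment is explicit in $(p_1,\dots,p_M)$, and then to invoke Lamperti-type criteria for Markov chains with asymptotically vanishing drift. Fix a large level $N$, let $\tau_N$ be the first hitting time of $N$, and for $1\le k\le N$ let $D^N_k$ denote the number of downward traversals of the edge $\{k-1,k\}$ before $\tau_N$. A standard excursion decomposition shows that $(D^N_{N-1},\dots,D^N_0)$ is a Markov chain read from right to left; its transition from state $u$ counts the number of downcrossings to level $k-2$ generated by the $u$ incoming visits to $k-1$ from above, with the walker at $k-1$ consuming cookies $p_1,p_2,\ldots$ in order on successive visits. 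The resulting chain is a close cousin of a Galton--Watson branching process with migration, where the periodic cookie structure dictates both the offspring law and the migration.

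\textbf{Cases $\ov{p}\ne\half$.} Here the total drift per $M$-cycle equals $M(2\ov{p}-1)\ne 0$, which translates to a supercritical or subcritical regime for the backward chain. A direct comparison with a random walk with drift, or the super/subcritical branching argument applied to the Ray--Knight chain, yields $X_n\to+\infty$ a.s.\ when $\ov{p}>\half$ and $X_n\to-\infty$ a.s.\ when $\ov{p}<\half$.

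\textbf{The critical case $\ov{p}=\half$.} Now the backward chain is critical, and one must extract the next-order behaviour. Summing contributions over one full cookie cycle, one obtains
\[
\E\bigl[D^N_{k-1}-D^N_k\,\big|\,D^N_k=u\bigr]\sim\frac{c_{\mathrm d}}{u},\qquad \Var\bigl(D^N_{k-1}\,\big|\,D^N_k=u\bigr)\sim c_{\mathrm v}\, u
\]
as $u\to\infty$, where the partial biases $\delta_i=\sum_{j=1}^i(2p_j-1)$ enter the drift via the expected displacement after the first $i$ cookies and the factors $p_j(1-p_j)$ supply the per-step variance, so that $2c_{\mathrm d}/c_{\mathrm v}=\theta(p_1,\dots,p_M)$ as in~\eqref{eq:periodic theta}. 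Passing to $Y^N_k=\sqrt{D^N_k}$ places the chain in Lamperti's setting (drift of order $1/Y$, bounded variance), whence the chain is transient to $+\infty$ iff $\theta(p_1,\dots,p_M)>1$ and recurrent otherwise. This gives $X_n\to+\infty$ a.s.\ when $\theta(p_1,\dots,p_M)>1$; the reflected walk, whose cookies are $1-p_i$, handles $X_n\to-\infty$ when $\theta(1-p_1,\dots,1-p_M)>1$; and when both quantities are $\le 1$ the walk is neither transient to $+\infty$ nor to $-\infty$, so it returns to $0$ infinitely often.

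\textbf{Main obstacle.} The substantive step is the drift/variance computation in the critical case: extracting the leading-order asymptotics so that the exact ratio $\theta(p_1,\dots,p_M)$ emerges, with the cumulative biases $\delta_i$ weighted by $(1-p_i)$ in the numerator. This periodic bookkeeping over a single cookie cycle is what captures the dependence of the answer on the order of the $p_i$'s, and is the main calculation; thereafter the Lyapunov/Lamperti criterion and the Ray--Knight translation back to the ERW are essentially standard.
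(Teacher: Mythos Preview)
Your overall strategy---pass to a Kesten--Kozlov--Spitzer/Ray--Knight type Markov chain and then apply a Lamperti criterion---is exactly what the paper does (the paper uses the Kosygina--Zerner variant: $Z^+_n$ counts right crossings of $(n-1,n)$ before hitting $-1$, rather than your downcrossings before $\tau_N$, but these are equivalent). However, there is a concrete error and a genuine gap.

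\textbf{The drift asymptotics are wrong.} You write $\E[D^N_{k-1}-D^N_k\mid D^N_k=u]\sim c_{\mathrm d}/u$, but in fact the conditional drift converges to a \emph{constant}: in the paper's notation $\rho(x)=\E[U_p(x)]-x\to\rho$ with $\rho=\frac{2}{M}\sum_i(1-p_i)\delta_i$. Your own next sentence---that after passing to $Y=\sqrt{D}$ one gets drift of order $1/Y$---is inconsistent with a $c_{\mathrm d}/u$ drift for $D$ (that would give drift of order $1/Y^3$ for $Y$). With the correct constant drift $c_{\mathrm d}$ and variance $c_{\mathrm v}u$, the Lamperti ratio is indeed $2c_{\mathrm d}/c_{\mathrm v}=2\rho/\nu=\theta$; but the computation of $\rho$ is not a one-line ``summing over one cookie cycle''---the paper needs a system of $M$ linear relations among the shifted drifts $\rho^{(j)}$ plus the stationarity of the auxiliary chain $R_i$ on $[M]$ to extract the formula with the $(1-p_i)$ weights.

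\textbf{The boundary case $\theta=1$ needs a rate.} Lamperti's criterion, in any of its forms, does not decide the case $\theta=1$ from the limit alone: one needs quantitative control of $\theta(x)-\theta$ (the paper proves $|\theta(x)-\theta|\le C\log^4(x)/\sqrt{x}$, which beats the $1/\ln x$ threshold in the Lyapunov argument). Your sketch asserts ``recurrent otherwise'' but supplies no mechanism for the equality case. Finally, to upgrade ``positive probability of transience'' to ``a.s.\ transience'' you need the zero--one law for directional transience in identically piled elliptic environments, which you use implicitly but do not mention.
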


It is interesting to compare Theorem~\ref{thm:periodic env} to results
about the case of bounded environment. Recall~\cite{kosygina2008positively} that in the case of bounded
environment the only value that matters is the \emph{total drift},
i.e., the sum $\sum_i(2p_i-1)$. If the total drift is bigger than 1, then the
walk is transient to the right, if it is smaller than $-1$ then the walk
is transient to the left, and if it is in $[-1,1]$ then the walk is
recurrent. For other phase transitions in the total drift see
\cite{kosygina2013phase}. Comparing to the bounded case, the cases
$\ov{p}>\frac12$ and $\ov{p}<\frac12$ (which corresponds to total
drift infinite and negative infinite, respectively) are not
surprising. For the case $\ov{p}=\frac12$ one could have naively
conjectured that it corresponds to total drift 0 and hence should be
recurrent. We see that this is not necessarily the case, and further,
that the question of recurrence depends also on the \emph{order} of
the $p_i$, a phenomenon which has no obvious analog with a bounded
number of cookies.

A less naive but still wrong conjecture would be ``what matters
is the \emph{average} total drift''. For example if we have 10
positive cookies followed by 10 negative cookies the ``total drift
after $n$ cookies'' fluctuates as $n$ changes between a large positive
number and 0, so maybe the average should be compared to 1. This turns
out to be wrong on two accounts. First one should not take a simple
average but a weighted average. And even then, this only explains the
numerator in the definition of $\theta$. The denominator has a
different origin, which we will explain after a necessary tour of the
Kesten-Kozlov-Spitzer approach.

The approach of Kesten, Kozlov, and Spitzer~\cite{KKS} (which may also be referred to Harris~\cite{harris1952first}) for processes on
$\Z$ is to examine the number of times the edge $(n-1,n)$ was
crossed up to a certain event --- denote it by $Z_n$ --- and study it
as a process in $n$. In the original application, random walk in
random environment, $Z_n$ behaved like a branching process with different
branching rules in different times. The approach was first applied to
excited random walk by Basdevant and Singh~\cite{basdevant2008speed},
who studied the case of finitely many
cookies, and in that case $Z_n$ turned out to be a branching process
\emph{with immigration}. In our case, however, the branching process
terminology is not as useful, and it is best to think about the
Kesten-Kozlov-Spitzer process as just some
Markov process on $\N_0=\{0,1,2,...\}$, the set of non negative integers.

We will use a variation of this approach due
to Kosygina and
Zerner~\cite{kosygina2008positively} in which there are two processes,
$Z_n^+$ and $Z_n^-$.
For a given cookie environment $\om$ define a corresponding
Markov chain $Z^+=(Z^+_n)_{n \geq 0}$ on $\N_0$ starting at $Z^+_0 = 1$, where
$Z^+_n$ counts the number of right crossings of the directed edge
$(n-1,n)$ of the ERW before hitting $-1$ for the first time.
Then, ERW on $\om$ is transient to the
right if and only if $Z^+$ does not return to zero with positive probability.
Similarly, for the Markov chain $Z^-=(Z^-_n)_{n \geq 0}$ on $\N_0$ starting
at $Z^-_0 = 1$, where $Z^-_n$ counts the number of left crossings of the
directed edge $(-n+1,-n)$ of the ERW before hitting $1$.
Then, ERW on $\om$ is transient to the left if and
only if $Z^-$ does not return to zero with positive probability.
Theorem~\ref{thm:periodic env} follows from an analysis of these two Markov chains,
together with zero-one laws for right/left transience of ERW in identically piled environments.
We shall discuss more about $Z^+$ and $Z^-$,
and their correspondence with ERW in \S\ref{sec:MC}.

Thus, the question of recurrence/transience of ERW reduces to a question
about Markov chains on $\N_0$. Our next step will be to formulate a criterion for transience of
Markov chains on $\N_0$ which is suitable for the kind of Markov
chains we will encounter.

Let $Z$ be an irreducible % aperiodic
discrete time Markov chain on $\N_0$,
and let $U$ be its step distribution. That is, for all
$n \geq 0$ the distribution of $Z_{n+1}$ conditioned on $Z_n$ is defined
as $\Pr[ Z_{n+1}=y | Z_n=x ]=\Pr[ U(x)=y ]$. Assume that the limit
$\lim_{x \to \infty} \frac{\E[U(x)]}{x}$ exists, and denote it by
\[
    \mu = \lim_{x \to \infty} \frac{\E[U(x)]}{x}.
\]
Furthermore, assume that $U$ is concentrated around its expectation. That is,
for all $x \in \N_0$ sufficiently large and for all $\eps>0$ it holds that
\[
    \Pr\left[ \left|\frac{U(x)}{x}-\mu \right| > \eps \right]
    \leq 2\exp(- C \eps' x)
\]
for some $\eps'$ that depends on $\eps$. See the statement of Theorem~\ref{thm:Fundamental}
for the precise assumptions.
We now define some quantities associated with $U$.
For each $x \in \N_0$ let
\begin{description}
\item[Drift]
    $\rho(x) = \E[U(x)-\mu x]$.
\item[Diffusion]
    $\nu(x) = \frac{\E[(U(x)-\mu x)^2]}{x}$.
\item[Ratio]
    $\theta(x) = \frac{2\rho(x)}{\nu(x)}$.
\end{description}
Note that since $Z$ is irreducible the random variable $U(x)$ is not constant.
Therefore, $\nu(x) > 0$ for all $x \in \N_0$, and so $\theta(x)$ is well defined.
We prove the following theorem.

\begin{thm}[Transience criterion for Markov chains on $\N_0$]
\label{thm:Fundamental}
Let $Z$ be an irreducible % aperiodic
discrete time Markov chain on $\N_0$ as above,
and let $U$ be its step distribution.
Assume that the limit $\mu = \lim_{x \to \infty} \frac{\E[U(x)]}{x}$ exists,
and furthermore that $U$ is concentrated in the sense that
there is a constant $C>0$ such that
for all $x \in \N_0$ sufficiently large and for all $\eps>0$ it holds that
\[
    \Pr\left[ \left|\frac{U(x)}{x}-\mu \right| > \eps \right]
    \leq 2\exp(-\frac{c \eps^2}{1+\mu+\eps}x).
\]
Suppose that $\mu \neq 1$.
\begin{itemize}
    \item If $\mu>1$, then $\Pr[Z_n>0 \mbox{ for all } n] > 0$.
    \item If $\mu<1$, then $\Pr[Z_n=0 \mbox{ for some } n] = 1$.
\end{itemize}
Suppose that $\mu=1$.
\begin{itemize}
    \item
    If $\theta(x) < 1 + \frac{1}{\ln(x)} - \alpha(x)\cdot x^{-\half}$
    for all sufficiently large $x \in \N_0$, where $\alpha(x)$ is such that $\alpha(x)\nu(x) \to+\infty$ as
    $x\to\infty$, then $\Pr[Z_n=0 \mbox{ for some } n] = 1$.
    \item
    If $\theta(x) > 1 + \frac{2}{\ln(x)} + \alpha(x)\cdot x^{-\half}$
    for all sufficiently large $x \in \N_0$, where $\alpha(x)$ is such that $\alpha(x)\nu(x) \to+\infty$ as
    $x\to\infty$, then $\Pr[Z_n>0 \mbox{ for all } n] > 0$.
    \end{itemize}
\end{thm}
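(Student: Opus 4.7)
\emph{Proof plan.} The approach is Foster--Lyapunov: construct a test function $V:\N_0\to\R$ so that $V(Z_n)$ is (essentially) a supermartingale outside a finite set, and read off recurrence or transience from the asymptotic behaviour of $V$ at infinity. This follows the tradition of Lamperti-style criteria for Markov chains with asymptotically zero drift, adapted to the scaling where step sizes are of order $\sqrt{x}$ rather than $O(1)$.

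The non-critical case $\mu\ne 1$ is easy. When $\mu<1$, the linear $V(x)=x$ satisfies $\E[V(Z_{n+1})-V(Z_n)\mid Z_n=x]=\rho(x)+(\mu-1)x\le -\delta x$ for $x$ large (since $\rho(x)=o(x)$), so $V$ is a strong Foster--Lyapunov function and the chain is positive recurrent, in particular hitting $0$ almost surely. When $\mu>1$, iterating the one-step concentration bound $\Pr[U(x)\ge\tfrac{\mu+1}{2}x]\ge 1-2\exp(-cx)$ along a geometrically growing trajectory with a Borel--Cantelli argument shows that from a sufficiently large starting state the chain grows like $(\tfrac{\mu+1}{2})^n Z_0$ with probability bounded below; the zero-one dichotomy for irreducible chains then forces transience, hence $\Pr[Z_n>0\ \forall n]>0$ also for $Z_0=1$.

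The substance is the critical case $\mu=1$, where I would be guided by the diffusion with generator $\tfrac12 x\nu(x)\partial^2+\rho(x)\partial$: a harmonic function $h$ satisfies $(\log h')'=-\theta(x)/x$, so $h'(x)\asymp x^{-1}(\log x)^{-c}$ in the regime $\theta(t)=1+c/\log t$, and $h$ is bounded at infinity iff $c>1$. This identifies the critical curve $\theta\approx 1+1/\log x$ and motivates the explicit test functions
\[
V_{\mathrm{rec}}(x) \;=\; \int_2^x \frac{dt}{t\log t}\;\sim\;\log\log x,\qquad V_{\mathrm{tr}}(x)\;=\;\int_x^\infty \frac{dt}{t(\log t)^2}\;=\;\frac{1}{\log x},
\]
with $V_{\mathrm{rec}}$ unbounded (for Foster's recurrence criterion) and $V_{\mathrm{tr}}$ bounded, positive and decreasing to $0$ (for the bounded-supermartingale / optional-stopping argument yielding $\Pr[Z_n=0\text{ for some }n]<1$). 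Writing $g(t)=t^{-1}(\log t)^{-\beta}$ with $\beta=1$ or $\beta=2$ for the respective integrand, the Taylor expansion of $V(U(x))-V(x)$ together with the identities $\E[U(x)-x]=\rho(x)$ and $\E[(U(x)-x)^2]=x\nu(x)$ combines the first two terms into $\pm\, g(x)\nu(x)/2\cdot(\theta(x)-\tilde\theta(x))$, where $\tilde\theta(x)=1+\beta/\log x$, and this has the correct sign for the supermartingale property under the assumed bound on $\theta$.

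The main technical obstacle is the Taylor remainder $R(x)=\tfrac16 V'''(\xi)(U(x)-x)^3+\cdots$. Differentiating gives $|V'''(x)|\asymp x^{-3}(\log x)^{-\beta}$, and the sub-Gaussian concentration hypothesis yields $\E[|U(x)-x|^3]\le Cx^{3/2}$, so that $|\E R(x)|\lesssim x^{-3/2}(\log x)^{-\beta}$ (the rare tail $|U(x)-x|>x/2$, where the pointwise bound on $V'''$ fails, is absorbed using the exponential concentration). Comparing with the leading term $\asymp x^{-1}(\log x)^{-\beta}\nu(x)|\theta-\tilde\theta|$, the leading term dominates exactly when $\sqrt{x}\,\nu(x)\,|\theta(x)-\tilde\theta(x)|\to\infty$, which is precisely the role of the correction $\alpha(x)\,x^{-1/2}$ with $\alpha(x)\nu(x)\to\infty$ in the statement. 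Once this error bookkeeping is in place, Foster's criterion delivers recurrence in the first subcase and the bounded-supermartingale argument delivers transience in the second.
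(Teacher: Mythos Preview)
Your proposal is correct and essentially identical to the paper's proof: the same Lyapunov functions $V(x)=\log\log x$ for recurrence and $V(x)=1/\log x$ for transience in the critical case $\mu=1$, the same third-order Taylor expansion yielding the comparison of $\theta(x)$ against $1+\beta/\log x$ with $\beta=1,2$, and the same remainder bound $O(x^{-3/2}(\log x)^{-\beta})$ explaining why the margin $\alpha(x)x^{-1/2}$ with $\alpha(x)\nu(x)\to\infty$ is exactly what is needed. The only cosmetic difference is that for $\mu>1$ the paper uses the Lyapunov function $V(x)=1/(x+1)$ rather than your direct Borel--Cantelli growth argument; both are valid.
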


\begin{rem}
    Note that Theorem~\ref{thm:Fundamental} does not cover the cases where
    $\theta(x)$ is between $1 + \frac{1}{\ln(x)}$ and $1 + \frac{2}{\ln(x)}$,
    and so it is not applicable for all Markov chains that satisfy
    $\theta = \lim_{x \to \infty} \theta(x) = 1$. Nevertheless, it will be
    enough for Theorem~\ref{thm:periodic env} since in the periodic
    case we have $|\theta(x)-\theta| \leq C\log^4(x)/{\sqrt{x}}$,
    as well as to reprove known results for bounded cookie environments,
    positive cookie environments, and branching processes with migration,
    where in all cases $|\theta(x)-\theta| = {O}(x^{-1/2})$.
\end{rem}

\begin{rem}
If $\liminf_{x\to\infty}\nu(x)>0$, then in the case $\mu=1$ in Theorem~\ref{thm:Fundamental}
it is enough to assume $\alpha(x)\to\infty$ instead of $\alpha(x)\nu(x)\to\infty$.
In particular, in the applications to Theorem~\ref{thm:Fundamental} in this paper there exists a limit $\lim_{x\to\infty}\nu(x)=\nu>0$.
\end{rem}

\begin{rem}
The denominator $(1+\mu+\eps)$ in the concentration assumption
is somewhat non-standard.
Note however, that for small values of $\eps$ this is
equivalent to the standard assumption
$\Pr\left[ \left|\frac{U(x)}{x}-\mu \right| > \eps \right] \leq 2\exp(- c \eps^2 x)$,
while for large values of $\eps$ this is essentially equivalent to
$\Pr\left[ \left|\frac{U(x)}{x}-\mu \right| > \eps \right] \leq 2\exp(- c \eps x)$.
\end{rem}

Theorem~\ref{thm:Fundamental} is quite easy to understand intuitively
even in the case that $\mu=1$. Assume $\theta(x)$ converges to some
$\theta$. Then $Z$ is a discrete version of a \emph{Bessel process} in
dimension $\theta+1$ (this connection between excited random walk and
Bessel processes has already been noted in~\cite{kosygina2009limit}).

Similar results were proved by Lamperti~\cite{Lamperti}
and Menshikov, Asymonth, and Iasnogorodski~\cite{zerodrift}
in slightly different settings. %In particular, for the case of $\mu=1$
%they assume that $\E[(U(x) - x)^2 \cdot \log^{1+\eps}(|U(x) - x|)]<C<\infty$, while in our setting
%we have $\E[(U(x) - x)^2] \ge c \cdot n$ for some constant $c>0$, and so we cannot apply their results.
Fortunately, the concentration assumptions in Theorem~\ref{thm:Fundamental}
are sufficient for our purposes, and we prove Theorem~\ref{thm:Fundamental}
following the same ideas as in~\cite{Lamperti} and~\cite{zerodrift} by using
the classic approach of Lyapunov functions. As this is standard, the
proof will be given in the appendix.

\medskip

We are now in a position to explain Theorem~\ref{thm:periodic env}. We
will show below that the $\theta$ given by~\eqref{eq:periodic theta}
is exactly the $\theta$ of Theorem~\ref{thm:Fundamental} when applied
to the process $Z_n$. In fact, the numerator of~\eqref{eq:periodic theta}
is $2\rho$ and the denominator is $\nu$. Theorem~\ref{thm:Fundamental}
can also be used to give short proofs of existing results such as in
the case of bounded environments studied in~\cite{kosygina2008positively}.
In this case the quantity $\rho$ is exactly the total drift $\sum_i(2p_i-1)$,
and $\nu$ is equal to 2. Thus, the appearance of the parameter $\nu$ (the
denominator in~\eqref{eq:periodic theta}) is another phenomenon of infinite
environments, which has no analog in bounded environments.
See \S{\ref{sec:known results}} for details.

In order to apply Theorem~\ref{thm:Fundamental}
in the case of periodic environment $\om(p)$ considered in
Theorem~\ref{thm:periodic env} we define the corresponding Markov chain
$Z^+=(Z_n^+)_{n \geq 0}$ as explained above with step distribution $U_p$. We then
do the following.
\begin{enumerate}
\item
    Formulate the measure of the corresponding step distribution $U_p$ in terms of $p$.
\item
    Prove concentration bounds for $U_p$.
\item
    Calculate the parameters $\mu$ and $\theta$ as a function of $p$.
\item
    Prove that when $\lim_{x \to \infty}\theta(x)=1$,
    the convergence of the ratio $\theta(x)$ is sufficiently fast
    (that is, faster than $\frac{1}{\ln(x)}$).
\end{enumerate}

%%%%%%%%%%%%%%%%%%%%%%%%%%%%%%%%%%%%%%%%%%%%%%%%%%%%%%%%%%%%%%%%%%%%%%%%%%%%%%%
\subsection{Structure of the paper}\label{subsec:structure}
%%%%%%%%%%%%%%%%%%%%%%%%%%%%%%%%%%%%%%%%%%%%%%%%%%%%%%%%%%%%%%%%%%%%%%%%%%%%%%%

In \S\ref{sec:MC} we define the correspondence between ERW and the
Markov chain $Z^+$ on $\N_0$, and prove some properties of the step
distribution $U_p$ defined by the environment $p$.
Theorem~\ref{thm:periodic env} is proven in \S\ref{sec:periodic}.
The proofs in this section include the calculations of $\mu$ and $\theta$,
and are rather technical.
In \S\ref{sec:known results} we reprove some existing results
on ERW for the case of positive environments~\cite{zerner2005multi} and for the case
of bounded environments~\cite{kosygina2008positively}, and a result on branching processes with migration.
We conclude the paper with some open problems in
\S\ref{sec:Open Problems}. We prove
Theorem~\ref{thm:Fundamental} in the appendix.%~\ref{sec:mc survival criterion}.

%%%%%%%%%%%%%%%%%%%%%%%%%%%%%%%%%%%%%%%%%%%%%%%%%%%%%%%%%%%%%%%%%%%%%%%%%%%%%%%
\subsection{Basic notations}\label{sec:notations}
%%%%%%%%%%%%%%%%%%%%%%%%%%%%%%%%%%%%%%%%%%%%%%%%%%%%%%%%%%%%%%%%%%%%%%%%%%%%%%%

Throughout the paper we distinguish between $\N = \{1, 2, \dots, \}$,
the set of strictly positive integers and $\N_0 = \{0,1,2,\dots\}$, the
set of non-negative integers. For a positive integer $M \in \N$ we denote
$[M] = \{1,2,\dots,M\}$.
For a vector $v=(v_1,v_2\dots) \in \R^\N$, and for $j \in \N$ denote by
$s^j(v) = (v_j,v_{j+1}\dots) \in \R^\N$, that is,
$s^j(v)$ is the shift of $v$ by $j-1$ to the left.
Similarly, for a vector $v=(v_1,\dots,v_n) \in \R^n$
we denote by $s^j(v)$ its cyclic rotation by $j-1$ to the left, i.e.,
$s^j(v) = (v_j,\dots,v_n,v_1,\dots,v_{j-1})$.

%%%%%%%%%%%%%%%%%%%%%%%%%%%%%%%%%%%%%%%%%%%%%%%%%%%%%%%%%%%%%%%%%%%%%%%%%%%%%%%
\section{Associating ERW with a Markov chain on
  \texorpdfstring{$\N_0$}{the integers}}\label{sec:MC}
%%%%%%%%%%%%%%%%%%%%%%%%%%%%%%%%%%%%%%%%%%%%%%%%%%%%%%%%%%%%%%%%%%%%%%%%%%%%%%%

In this section we are setting up the tools needed in order to prove
Theorem~\ref{thm:periodic env}. The main idea behind the proof of
Theorem~\ref{thm:periodic env} is to study a different process, which
is Markovian, unlike the original ERW.
This connection between the Markov chain $Z^+$ and right transience is well-known,
and may be found, for example, in \S 2 of~\cite{kosygina2008positively}.
%Section 2.3 of \cite{Amir2013ZeroOne}, and section 2.XXX of \cite{Amir2013mob}.
We describe the correspondence between the two processes here for the
reader's convenience.

\begin{definition}\label{def:U}
Fix an elliptic and non-degenerate cookie environment $p = (p_i)_{i \in \N}$.
For each $x \geq 0$ define a random variable $U_p(x)$ in the following way. For each $x>0$ let
\[
U_p(x)=\inf\Big\{k \in \N: \sum_{i=1}^k (1-B_i)=x\Big\}-x,
\]
where $B_i \sim B(p_i)$ are independent Bernoulli random variables.
In words, $U_p(x)$ counts the number of `successes' in a sequence
of Bernoulli trials with probabilities $p_1,p_2,\dots$ until reaching
$x$ `failures'.
%In order to relate $U_p$ to an ERW in $\omega(p)$, we think that
%upon the $i^\textrm{th}$ crossing of ERW from $n-1$ to $n$
%`success' in the $B_i$ trial means moving to $n+1$ (with probability $p_i$)
%and `failure' means going back to $n-1$.
%igor: I don't like the last sentence here. It makes no sense to me.

Finally, define a Markov chain $Z^+ = (Z^+_n)_{n \geq 0}$ on $\N_0$ where
$Z^+_0 = 1$, and $U_p$ is its step distribution. That is,
\begin{equation}
    Z^+_0=1 \qquad \mbox{and} \qquad Z^+_{n+1} \sim U_p(Z^+_n).
\end{equation}
To ensure that $Z$ be irreducible, set $U_p(0)=1$.
\end{definition}

The basic observation due to Kosygina and Zerner~\cite{kosygina2008positively} is
that if $X$ is an ERW in $\omega(p)$, then on the event $T_{-1}<\infty$
the sequence $Z_n^+$ has the same distribution as the number
of right crossings of the directed edges $(n-1,n)$ by $X$ until $T_{-1}$,
where $T_{-1} = \inf \{t \geq 0 : X_t=-1\}$ is the hitting time of $-1$ by $X$.
Moreover, on $T_{-1}=\infty$ the process $Z_n^+$ stochastically dominates the corresponding
number of left crossings. Therefore, we have $Z_n^+>0$ for all $n \in \N$ if and only if $T_{-1}=\infty$.
Since in this paper we are only interested in environments that are identically piled and elliptic
the range of the walk in such environments is a.s.\ infinite, and hence
we have $\Pr[T_{-1}=\infty]>0$ if and only if $\Pr[X_n\to\infty]>0$.
(To see why the range of the walk has to be infinite, note first that
if it is non-degenerate then its range is infinite a.s.\ by the Borel Cantelli Lemma.
Otherwise, since the environment is assumed to be elliptic and identically piled,
if $\sum_i p_i<\infty$ it is transient to the left, and if $\sum_i (1-p_i)<\infty$
it is transient to the right, again using the Borel Cantelli Lemma.)
Therefore,  $\Pr[Z_n^+>0\text{ for all $n$}]>0$ if and only if $\Pr[X_n\to\infty]>0$.
The reader is referred to \S2 of~\cite{Amir2013ZeroOne} for a complete argument using the so called arrow environments.

Analogously, we define the view of ERW ``to the left'' and associate it
with the following Markovian process $Z^-$.
Let $q$ be the cookie environment defined by $q_i = 1 - p_i$ for all
$i \in \N$. Define $U_q(x)=\inf\{k \in \N: \sum_{i=1}^k (1-B'_i)=x\}-x$,
where $B'_i \sim B(q_i)$ are independent Bernoulli random variables,
and let $Z^- = (Z^-_n)_{n \geq 0}$ be a Markov chain on $\N_0$ defined as
\begin{equation}
    Z^-_0=1 \qquad \mbox{and} \qquad Z^-_{n+1}\sim U_q(Z^-_n).
\end{equation}
Symmetrically to $Z^+$, we have $\Pr[Z_n^->0\text{ for all $n$}]>0$
if and only if $\Pr[X_n\to-\infty]>0$.

We will use the following result of Amir~et al.~from~\cite{Amir2013ZeroOne}
that asserts a zero-one law for directional transience of $X$.
\begin{thm}\label{thm:DirectionalZeroOneLaw}
    Let $p$ be an elliptic cookie environment,
    and let $X$ be an ERW in $\om(p)$.
    Then $\Pr[X_n\to+\infty],\Pr[X_n\to-\infty] \in \{0,1\}$.
\end{thm}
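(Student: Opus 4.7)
My plan is to establish the zero-one law via a ``fresh start'' argument at new maxima and minima, which exploits the identically piled structure, and then to combine with a zero-one law for the events $\{\sup_n X_n=+\infty\}$ and $\{\inf_n X_n=-\infty\}$. I focus on $\Pr[X_n\to+\infty]$; the case of $-\infty$ is symmetric.

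For each $M \geq 1$ let $T_M = \inf\{n : X_n = M\}$, and let $\rho = \Pr[T_{-1} = \infty]$ be the probability that a fresh ERW started at $0$ never visits $-1$. The identically piled assumption implies that on the event $\{T_M < \infty\}$, all cookies at sites $z \geq M$ are untouched at time $T_M$, so by the strong Markov property, the conditional probability given $\mathcal{F}_{T_M}$ that the walker never visits $M-1$ after time $T_M$ is exactly $\rho$.

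From this I would deduce two consequences. First, if $\rho = 0$ then $\Pr[X_n \to +\infty] = 0$, since $X_n \to +\infty$ forces the walker to remain $\geq M$ from some time on for every $M$, which in particular forces the fresh-start escape event at level $M$ to occur, contradicting $\rho=0$. Second, if $\rho > 0$ then on $\{\sup_n X_n = +\infty\}$, almost surely $X_n \to +\infty$. Indeed, on that event one obtains an infinite sequence of new-maxima times $T_{M_k}$ with $M_k \to \infty$; the events ``walker never revisits $M_k - 1$ after $T_{M_k}$'' have conditional probability $\rho$ given the past, so by L\'evy's conditional Borel--Cantelli lemma, infinitely many of them occur almost surely, forcing $\liminf X_n = +\infty$. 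A symmetric argument applies to $-\infty$ with $\rho' = \Pr[T_{+1} = \infty]$ for the environment $1-p$.

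Combined with the infinite-range observation already noted in the excerpt (which yields $\Pr[\sup_n X_n = +\infty \text{ or } \inf_n X_n = -\infty] = 1$), these steps reduce the zero-one question for $\{X_n \to \pm\infty\}$ to a zero-one law for each of $\{\sup_n X_n = +\infty\}$ and $\{\inf_n X_n = -\infty\}$. I expect this last step to be the main obstacle. A natural approach is to represent the ERW as a deterministic functional of the arrow environments (one independent Bernoulli sequence per site, independent across sites) and to realize these events as tail events in a suitable product structure, so that Kolmogorov's $0$-$1$ law applies; however, the adaptive nature of cookie consumption makes the required tail-measurability delicate, since modifying the arrows at even a single site can in principle affect the walk at all later times. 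The argument in \cite{Amir2013ZeroOne} handles this step by a careful construction tailored to ERW, and it is this part that I would import rather than re-prove from scratch.
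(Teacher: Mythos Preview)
The paper does not prove this theorem; it is quoted as a result of Amir et al.\ \cite{Amir2013ZeroOne} and used as a black box. So there is no in-paper proof to compare against, and your proposal --- which ultimately defers the hard step to \cite{Amir2013ZeroOne} --- is in that sense aligned with what the paper does.

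That said, your Step~3 argument (``$\rho=0 \Rightarrow \Pr[X_n\to+\infty]=0$'') is flawed as written. You assert that $X_n\to+\infty$ forces the fresh-start escape event $A_M=\{\text{never visit }M{-}1\text{ after }T_M\}$ to hold for some $M$, but this is false at the path level: the trajectory $0,1,0,1,2,1,2,3,2,3,4,\dots$ tends to $+\infty$ while $A_M$ fails for \emph{every} $M$. The implication $\rho=0\Rightarrow\Pr[X_n\to+\infty]=0$ is in fact true, but it is not the free reduction you present it as. The paper itself states the equivalence $\Pr[T_{-1}=\infty]>0\iff\Pr[X_n\to+\infty]>0$ just above Theorem~\ref{thm:DirectionalZeroOneLaw} and again refers the reader to \S 2 of \cite{Amir2013ZeroOne} (the arrow-environment argument) for a complete proof. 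So this step already needs the machinery you postpone to your final step; your ``reduction'' does not actually bypass anything.

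Your Step~4 is essentially correct, but the L\'evy--Borel--Cantelli application needs the trials to be made adapted: the events $A_M$ are not $\mathcal{F}_{T_{M+1}}$-measurable. One fixes this by choosing levels recursively --- take $N_{k+1}>\max_{n\le R_k}X_n$ where $R_k$ is the return time to $N_k-1$ --- so that the failure of the $k$-th trial is decided before the $(k{+}1)$-st fresh start, giving genuinely successive trials with conditional success probability $\rho$.
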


This implies that in order to prove that $X$ is right transient a.s.
it is enough to show that $\Pr[X_n \to +\infty] > 0$. (An analogous
equivalence holds also for left transience.)
By the discussion above we get the following corollary from
Theorem~\ref{thm:DirectionalZeroOneLaw}.
\begin{thm}\label{thm:Z char transience}
    Let $p$ be an elliptic and non-degenerate cookie environment,
    and let $X$ be an ERW in $\om(p)$.
    Then, the following holds.
    \begin{itemize}
    \item
        $\Pr[Z^+_n>0 \mbox{ for all } n] > 0$ if and only if $\Pr[X_n \to +\infty] = 1$.
    \item
        $\Pr[Z^-_n>0 \mbox{ for all } n] > 0$ if and only if $\Pr[X_n \to -\infty] = 1$.
    \item
        $\Pr[Z^+_n=0 \mbox{ for some } n] = \Pr[Z^-_n=0 \mbox{ for some } n] = 1$
        if and only if $\Pr[X_n = 0 \mbox{ i.o.}]=1$.
    \end{itemize}
\end{thm}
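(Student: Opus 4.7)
The statement bundles three equivalences that follow rather directly from (i) the Kosygina--Zerner correspondence between $Z^+$ (resp.~$Z^-$) and right (resp.~left) transience of $X$, essentially spelled out in the paragraphs preceding the theorem, (ii) the directional zero-one law of Theorem~\ref{thm:DirectionalZeroOneLaw}, and (iii) a short pathwise argument showing that non-transience forces visits to $0$. The plan is to stitch these pieces together in this order.

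For the first bullet I would formalize the observation already sketched in the text: on $\{T_{-1}<\infty\}$ the sequence $(Z^+_n)$ has the law of the numbers of right-crossings of the directed edges $(n-1,n)$ by $X$ up to $T_{-1}$, and on $\{T_{-1}=\infty\}$ it stochastically dominates the corresponding left-crossing counts. In particular $\Pr[Z^+_n>0 \text{ for all } n]=\Pr[T_{-1}=\infty]$. Since ellipticity together with non-degeneracy forces the range of $X$ to be a.s.\ infinite (Borel--Cantelli on the cookies at $0$), one concludes $\Pr[T_{-1}=\infty]>0$ if and only if $\Pr[X_n\to+\infty]>0$, and Theorem~\ref{thm:DirectionalZeroOneLaw} upgrades the right-hand side from positive probability to probability one. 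The second bullet is the mirror statement obtained by replacing $p_i$ by $1-p_i$.

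For the third bullet I would first use irreducibility of $Z^+$: since $Z^+$ starts at $1$ and is irreducible on $\N_0$, the event $\{Z^+_n=0 \text{ for some } n\}$ has probability $1$ if and only if $\{Z^+_n>0 \text{ for all } n\}$ has probability $0$, which by the first bullet is equivalent to $\Pr[X_n\to+\infty]=0$. The analogous reasoning applies to $Z^-$ and $\{X_n\to-\infty\}$. Hence both hitting conditions in the statement hold precisely when $X$ is a.s.\ neither right- nor left-transient.

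The only genuine (if elementary) obstacle is then to identify $\{X_n\not\to\pm\infty\}=\{X_n=0\text{ i.o.}\}$ up to a null set. The cleanest argument is pathwise: suppose $\limsup X_n=M$ is finite; then $M$ must be attained infinitely often, and since the environment is identically piled, the $j$-th use of the cookie stack at $M$ yields an up-jump with probability $p_j$. Conditional Borel--Cantelli combined with $\sum p_j=\infty$ (non-degeneracy) forces $X$ to step from $M$ to $M+1$ at infinitely many visits, contradicting $\limsup X_n=M$. Hence $\limsup X_n\in\{-\infty,+\infty\}$ a.s., and symmetrically for $\liminf X_n$. The only non-transient configuration is $\liminf X_n=-\infty<+\infty=\limsup X_n$, and since $X$ takes nearest-neighbor steps it must then cross $0$ infinitely often, completing the chain of equivalences.
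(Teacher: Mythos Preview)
Your proposal is correct and follows essentially the same route as the paper: the paper does not give a separate proof of this theorem but merely states it as a corollary of the preceding discussion (the Kosygina--Zerner correspondence and the a.s.\ infinite range) together with the zero-one law of Theorem~\ref{thm:DirectionalZeroOneLaw}. You have simply spelled out those steps, and in particular supplied the pathwise $\limsup/\liminf$ argument for the third bullet that the paper leaves implicit. One cosmetic remark: your appeal to irreducibility of $Z^+$ in the third bullet is unnecessary, since $\{Z^+_n=0\text{ for some }n\}$ and $\{Z^+_n>0\text{ for all }n\}$ are complementary events regardless.
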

Therefore, in order to prove Theorem~\ref{thm:periodic env} we need to
understand when the Markov chains $Z^+$ and $Z^-$ have a positive
probability to keep above $0$ for all $n \ge 0$.

%%%%%%%%%%%%%%%%%%%%%%%%%%%%%%%%%%%%%%%%%%%%%%%%%%%%%%%%%%%%%%%%%%%%%%%%%%%%%%%
\subsection{Studying the step distribution \texorpdfstring{$U_p$}{U}}\label{subsec:U_p(x)}
%%%%%%%%%%%%%%%%%%%%%%%%%%%%%%%%%%%%%%%%%%%%%%%%%%%%%%%%%%%%%%%%%%%%%%%%%%%%%%%

In order to understand the Markov chain $Z^+$ we analyze
its step distribution $U_p$. Recall that by
Theorem~\ref{thm:Fundamental} we need to understand the relevant parameters
of $U_p$, namely $\mu$, $\rho(x)$ and $\nu(x)$. In addition, in order to
apply Theorem~\ref{thm:Fundamental} we need to show that $U_p(x)$ is concentrated
around $\mu x$ in the appropriate sense.

We start by computing the expectation parameter $\mu$ explicitly, and by
showing that $U_p(x)$ is concentrated around its expectation.
In order to do so, it will be convenient to define the random variables
\begin{equation}\label{eq:defFn}
F_n = \sum_{i=1}^n B'_i,
\end{equation} where $B'_i = 1 - B_i \sim B(q_i)$ are independent
Bernoulli random variables with $q_i = 1 - p_i$ for all $i \in \N$. Note that
by definition of $F_n$ we have
\begin{equation}\label{eq:Fn and U(x)}
\{F_n < x\}=\{U_p(x)>n-x\} \text{ and }\{F_{n-1} \geq x\}=\{U_p(x) < n-x\}.
\end{equation}

For each $n \in \N$ define
\begin{equation}\label{eq:p_n}
    \ov{p}_n=\frac{1}{n}\sum_{i=1}^n p_i.
\end{equation}

We claim that for any environment $p$ such that for some real numbers $K$ and $\ov{p}$
it holds that $|\bar{p}_n-\ov{p}|\le\frac{K}{n}$ for all $n$, we have
$\mu = \frac{\ov{p}}{1-\ov{p}}$, and $\frac{U_p(x)}{x}$ is concentrated
around $\mu$. This is proven in the following proposition.
\begin{prop}
\label{prop:concentration of U}(Concentration bound for $U_p$)
    Let $p$ be a cookie environment. Suppose it
    satisfies the assumptions as above, namely, the limit
    $\ov{p}=\lim_{n\to\infty}\overline{p}_n \in (0,1)$ exists and
    there is some $K \in \R$ such that
    $|\ov{p}_n-\ov{p}|\leq \frac{K}{n}$ for all $n \in \N$.
    Then, for all $\eps> 0$ it holds that
    \[
        \Pr\left[ \left|\frac{U_p(x)}{x}-\mu \right| > \eps \right]
        \leq 2\exp(-\frac{c \eps^2}{1+\mu+\eps}x),
    \]
    where $\mu = \frac{\ov{p}}{1-\ov{p}}$,
    and $c>0$ is some constant that depends only on $p$.
\end{prop}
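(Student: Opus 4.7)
The plan is to translate the event $\{|U_p(x)/x - \mu| > \eps\}$ into a tail event about the sum $F_n = \sum_{i=1}^n B'_i$ of independent Bernoulli variables via the duality~\eqref{eq:Fn and U(x)}, and then apply Hoeffding's inequality. The key point is that $F_n$ is a sum of independent (non i.i.d., but bounded) Bernoulli random variables with $\E[F_n] = \sum_{i=1}^n q_i = n(1 - \overline{p}_n)$, which, by the hypothesis $|\overline{p}_n - \overline{p}| \leq K/n$, differs from $n(1-\overline{p})$ by at most $K$. This constant additive error is harmless.

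The upper tail $U_p(x) > x(\mu+\eps)$: choose $n = \lceil x(1+\mu+\eps)\rceil$, so by~\eqref{eq:Fn and U(x)} the event equals $\{F_n < x\}$. Since $(1+\mu)(1-\overline{p}) = 1$, one computes
\[
\E[F_n] \geq n(1-\overline{p}) - K \geq x + x\eps(1-\overline{p}) - K,
\]
so for $x$ large the deviation $\E[F_n] - x$ is at least $\tfrac12 x\eps(1-\overline{p})$. Hoeffding gives
\[
\Pr[F_n < x] \leq \exp\!\Big(-\tfrac{c_1 \eps^2 x^2}{n}\Big) \leq \exp\!\Big(-\tfrac{c_2 \eps^2}{1+\mu+\eps}\,x\Big),
\]
using $n \leq 2x(1+\mu+\eps)$ for $x$ large.

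The lower tail $U_p(x) < x(\mu-\eps)$: if $\eps \geq \mu$ there is nothing to prove (the event is empty), so assume $\eps < \mu$. Choose $n = \lfloor x(1+\mu-\eps)\rfloor + 1$; then $\{U_p(x) < n-x\} = \{F_{n-1} \geq x\}$, and
\[
\E[F_{n-1}] \leq (n-1)(1-\overline{p}) + K \leq x - x\eps(1-\overline{p}) + K + 1,
\]
so the deviation $x - \E[F_{n-1}]$ is at least $\tfrac12 x\eps(1-\overline{p})$ for $x$ large. Hoeffding then gives the same bound as above (with possibly a smaller constant), and combining the two tails yields the proposition with the constant $c$ depending only on $\overline{p}$ and $K$, i.e.\ only on $p$.

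There is no real obstacle: the argument is essentially a Chernoff/Hoeffding estimate for $F_n$, together with the duality~\eqref{eq:Fn and U(x)}, and the only care needed is (i) to pick the right $n$ in each of the two tails so that $n-x$ matches the deviation $x\eps$ of $U_p$, and (ii) to track the factor $n \asymp x(1+\mu+\eps)$ in Hoeffding, which is precisely what produces the slightly non-standard denominator $1+\mu+\eps$ in the stated bound. The hypothesis $|\overline{p}_n - \overline{p}|\le K/n$ enters only to replace $\E[F_n]$ by $n(1-\overline{p})$ up to an $O(1)$ error that is absorbed into the deviation for large $x$.
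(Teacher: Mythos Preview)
Your argument is correct and is essentially identical to the paper's own proof: translate the two tails of $U_p(x)$ into the events $\{F_{\lceil (1+\mu+\eps)x\rceil}<x\}$ and $\{F_{\lfloor (1+\mu-\eps)x\rfloor}\ge x\}$ via~\eqref{eq:Fn and U(x)}, use $|\ov{p}_n-\ov{p}|\le K/n$ to replace $\E[F_n]$ by $n(1-\ov{p})$ up to $O(1)$, and apply Hoeffding. The only cosmetic difference is that the paper notes up front that the bound is vacuous unless $\eps\gtrsim x^{-1/2}$, which is how it disposes of the regime where the $O(1)$ error is not dominated by $x\eps(1-\ov{p})$; your ``for $x$ large'' plays the same role.
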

Note that the bound is interesting only for $\eps > \frac{C}{\sqrt{x}}$ for some
$C>0$ sufficiently large, and so we shall assume that this is indeed the case.
\begin{proof}
    We rely on the correspondence between $U_p(x)$ and $F_n$ stated
    in~\eqref{eq:Fn and U(x)}, and use the concentration of $F_n$ in order
    to prove the proposition.
    By~\eqref{eq:Fn and U(x)} we have
    \[
        \Pr\left[ \left|\frac{U_p(x)}{x}-\mu \right| > \eps \right]
        \leq
        \Pr\left[ F_{\ceil{(1+\mu+\eps)x}} < x \right]
        +
        \Pr\left[ F_{\floor{(1+\mu-\eps)x}} \geq x \right].
    \]
    We shall bound each of the two terms using Hoeffding's inequality.

For the first term, define $n=\lceil(1+\mu+\eps)x\rceil$ and note that
\begin{align}\label{eq:EFn}
\E[F_n]&=\sum_{i=1}^nq_i
=n(1-\ov{p}_n)
\stackrel{(*)}{=}n(1-\ov{p})+O(1)\nonumber\\
&\stackrel{\clap{$\scriptstyle (**)$}}{=}\,
x\Big(1+\frac{\ov{p}}{1-\ov{p}}+\eps\Big)(1-\ov{p})+O(1)
=x+x\eps(1-\ov{p})+O(1).
\end{align}
where in $(*)$ we used the assumption on $|\ov{p}_n-\ov{p}|$, and in $(**)$
the definitions of $n$ and $\mu$. Since $F_n$ is a sum of $n$ independent
bounded summands we get from Hoeffding's inequality that
\[
\Pr[F_n<x]
\stackrel{\textrm{\eqref{eq:EFn}}}{\le}\Pr\big[F_n-\E[F_n]<-c'x\eps\big]
\le \exp(-c(\eps x)^2/n)
\le \exp(-\frac{c \eps^2}{1+\mu+\eps}x)
\]
for some constants $c,c' > 0$ that depends only on $p$.
The bound for $\Pr\big[F_{\lfloor(1+\mu-\eps)x\rfloor}\ge x\big]$ is
similar, and the proposition is proved.
\end{proof}
%%%%%%%%%%%%%%%%%%%%%%%%%%%%%%%%%%%%%%%%%%%%%%%%%%%%%%%%%%%%%%%%%%%%%%%%%%%%%%%
\subsection{The centralized second moment of \texorpdfstring{$U_p(x)$}{U}}\label{subsec:VarCalculation}
%%%%%%%%%%%%%%%%%%%%%%%%%%%%%%%%%%%%%%%%%%%%%%%%%%%%%%%%%%%%%%%%%%%%%%%%%%%%%%%

The following theorem gives an explicit formula for the second moment of $U_p(x)-x$.
Recall the definition of $\ov{p}_n$ in~\eqref{eq:p_n}
\begin{lem}\label{lem:VarCalculation}
Let $p$ be a cookie environment, and let
$U_p$ be the step distribution of the corresponding Markov chain $Z^+$.
Suppose that the limit $\lim_{n \to \infty}\ov{p}_n$ exists and equals
to $\half$. For each $n\in \N$ define
\begin{equation}\label{eq:defAn}
    A_n=\frac{1}{n}\sum_{i=1}^n p_i(1-p_i).
\end{equation}
Suppose that the limit $\lim_{n \to \infty} A_n$ also exists and is strictly positive.
Denote this limit by $A$. Assume further that there is some $K$ such that
$|\ov{p}_n-\half| \leq \frac{K}{n}$ and $|A_n-A| \leq \frac{K}{n}$
for all $n\in\N$. Then, the limit
$\lim_{x\to\infty}\frac{1}{x}\E[(U_p(x)-x)^{2}]$
exists and is equal to
\[
    \lim_{x\to\infty}\frac{1}{x}\E[(U_p(x)-x)^{2}] = 8A.
\]
Moreover, the rate of convergence is bounded by $C \cdot \log^4(x)/\sqrt{x}$,
that is, for all $x \in \N_0$ sufficiently large it holds that
\[
    \frac{1}{x} \cdot \E[(U_p(x)-x)^{2}] = 8A + O \left( \frac{\log^4(x)}{\sqrt{x}} \right),
\]
where the constant implicit in the $O()$ notation depends only on $p$.
\end{lem}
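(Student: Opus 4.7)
The plan is to write $U_p(x)-x$ as (minus twice) the stopped value of a martingale, up to an $O(1)$ error, and then compute the second moment by the $L^2$ optional stopping identity. Let $\tau_x = U_p(x)+x = \inf\{n \in \N : F_n = x\}$, so that $F_{\tau_x} = x$ by definition. Define the martingale
\[
M_n = F_n - \E[F_n] = \sum_{i=1}^n(B'_i - q_i),
\]
which has increments bounded by $1$ and satisfies $\E[M_n^2] = \sum_{i \le n} p_i q_i = nA_n$, so $M_n^2 - nA_n$ is also a martingale.

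\textbf{Step 1 (identity).} Since $F_{\tau_x}=x$ and $|\ov p_n-\half|\le K/n$,
\[
M_{\tau_x} = x - \tau_x(1-\ov p_{\tau_x}) = x - \tfrac{\tau_x}{2} + R_0, \qquad |R_0| \le K,
\]
with $R_0 = -\tau_x(\half - \ov p_{\tau_x})$. Rearranging yields
\[
U_p(x) - x = \tau_x - 2x = -2M_{\tau_x} + R, \qquad |R| = 2|R_0| \le 2K.
\]

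\textbf{Step 2 ($\E\tau_x = 2x+O(1)$).} By Proposition~\ref{prop:concentration of U}, $\tau_x/x \to 2$ with exponential concentration, so $\E\tau_x < \infty$. Optional stopping applied to $M_n$ (with the standard truncation at $\tau_x\wedge N$ and limit $N\to\infty$, the boundary term killed by the exponential tail) gives $\E M_{\tau_x}=0$. Taking expectations in the Step~1 identity then yields $\E\tau_x = 2x + O(1)$.

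\textbf{Step 3 ($\E M_{\tau_x}^2 = 2Ax+O(1)$).} Applying the same truncation scheme to the martingale $M_n^2 - \sum_{i\le n}p_iq_i$ yields the Wald second-moment identity
\[
\E[M_{\tau_x}^2] = \E[\tau_x A_{\tau_x}].
\]
Since $|A_n - A| \le K/n$ implies $|\tau_x(A_{\tau_x}-A)| \le K$, and using Step~2,
\[
\E[M_{\tau_x}^2] = A\,\E\tau_x + O(1) = 2Ax + O(1).
\]

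\textbf{Step 4 (conclude).} Squaring the identity of Step~1 and taking expectation, with Cauchy--Schwarz giving $\E|M_{\tau_x}| \le \sqrt{\E M_{\tau_x}^2} = O(\sqrt x)$, we find
\[
\E[(U_p(x)-x)^2] = 4\,\E M_{\tau_x}^2 - 4\,\E[M_{\tau_x}R] + \E R^2 = 8Ax + O(\sqrt x).
\]
Dividing by $x$ gives the claimed limit $8A$ with remainder of order $1/\sqrt x$, which lies comfortably inside the stated $O(\log^4(x)/\sqrt x)$ envelope.

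\textbf{Main obstacle.} The only genuinely delicate step is the justification of $L^2$ optional stopping for the process $M_n^2 - \sum_{i\le n}p_iq_i$ at the unbounded time $\tau_x$: the increments $2M_{n-1}(B'_n-q_n) + (B'_n-q_n)^2 - p_nq_n$ are not uniformly bounded. The remedy is standard: stop first at $\tau_x \wedge T$ with $T = 2x + C\sqrt{x}\log x$, apply optional stopping at the bounded time, and use the exponential concentration of Proposition~\ref{prop:concentration of U} (which forces $\Pr[\tau_x > T] \le \exp(-c\log^2 x)$) to show that the boundary contributions vanish as $T$ grows with $x$. The polylogarithmic slack in the lemma's error rate is precisely what makes this truncation argument painless.
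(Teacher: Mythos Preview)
Your proof is correct and takes a genuinely different route from the paper. The paper writes
\[
\E[(U_p(x)-x)^2]=\sum_{t\ge 0}(2t+1)\Pr[|U_p(x)-x|>t],
\]
passes to the variables $F_n$, applies Berry--Esseen to approximate each $\Pr[F_n<\cdot]$ by a Gaussian tail, and finally replaces a Riemann sum by an integral; each of these steps contributes an error factor, and the bookkeeping (head/tail split at $a=K\log x$, Lipschitz replacements of $A_n$ by $A$ and $\ov q_n$ by $\tfrac12$) produces the $\log^4(x)/\sqrt{x}$ bound. Your argument bypasses all of this: you recognise $U_p(x)-x=\tau_x-2x=-2M_{\tau_x}+O(1)$ and compute $\E M_{\tau_x}^2$ directly via Wald's second-moment identity for the martingale $M_n=F_n-\E F_n$, the hypotheses $|\ov p_n-\tfrac12|\le K/n$ and $|A_n-A|\le K/n$ being exactly what is needed to make the remainder terms bounded. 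The only point requiring care is the passage from $\tau_x\wedge T$ to $\tau_x$ in the optional-stopping argument, and your treatment of this is fine: either truncate as you describe, or simply observe that Proposition~\ref{prop:concentration of U} gives $\tau_x$ an exponential tail (hence $\E\tau_x^2<\infty$), and since $|M_{\tau_x\wedge T}|\le\tau_x$, dominated convergence applies directly. Your method is more elementary (no Berry--Esseen), noticeably shorter, and actually yields the sharper error $O(x^{-1/2})$ rather than $O(\log^4(x)/\sqrt{x})$; the paper's approach, on the other hand, would more readily extend to higher moments or to distributional statements, since the Gaussian approximation is already in hand.
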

Let us sketch the argument before starting the proof proper. We write
$\E[(U_p(x)-x)^2]=\sum (2t+1)\Pr[|U_p(x)-x|>t]$. We then rewrite each
term in the sum in the language of $F_n$ using
\eqref{eq:Fn and U(x)}. But $F_n$ is just a sum of independent
variables, so we can estimate it using the Berry-Esseen theorem. This
gives a sum over $\Phi$, the Gaussian cumulative distribution
function over a (small perturbation of a) linear progression. We approximate the sum with an
integral and the integral may be calculated explicitly.

Thus the proof is quite simple in principle, but there are multiple
approximation steps each of which has to be done carefully, and the
details will fill the rest of this section.
\begin{proof}
    We start by writing the expression of $\E[(U_p(x) - x)^2]$ as a sum.
\[
    \E[(U_p(x)-x)^{2}]
        = \sum_{t=0}^{\infty}(2t+1) \cdot \Pr[ |U_p(x)-x| > t ].
\]
    Note that
    \[
        \sum_{t=0}^{\infty} \Pr[ |U_p(x)-x| > t ] = \E[|U_p(x) - x|],
    \]
    which by Proposition~\ref{prop:concentration of U} is $O(\sqrt{x})$.
    Therefore, in order to prove the lemma it is enough to show that
    \[
        \left| \frac{1}{x} \sum_{t=0}^{\infty}t \cdot \Pr[ |U_p(x)-x| > t ] - 4A \right|
        = O \left( \frac{\log^4(x)}{\sqrt{x}} \right).
    \]
    Recall the random variables $F_n$ are defined in~\eqref{eq:defFn} as
    $F_n = \sum_{i=1}^n B'_i$, where $B'_i = 1 - B_i \sim B(q_i)$ are
    independent Bernoulli random variables, and $q_i = 1-p_i$ for all
    $i \in \N$. Then, using the connection between $F$ and $U_p$
    from~\eqref{eq:Fn and U(x)} it is enough to prove that
    \begin{equation}\label{eq:H_x+T_x}
        \left| \frac{1}{x} \left( \sum_{t=0}^{\infty}t \cdot (\Pr[ F_{2x+t} < x ] +\Pr[ F_{2x-t-1} \geq x ] ) \right) - 4A \right|
        = O \left( \frac{\log^4(x)}{\sqrt{x}} \right).
    \end{equation}
    We now divide the sum into two parts, the ``head'' and the ``tail''.
    For $x,a \in \N_0$ define
    \begin{align}
        H_x(a)&= \sum_{t=0}^{\floor{a\sqrt{x}}}t \left( \Pr[ F_{2x+t} <
          x ] + \Pr[F_{2x-t-1} \geq x] \right) \label{eq:H_x(a)}\\
        T_x(a)&= \sum_{t={\floor{a\sqrt{x}}}+1}^\infty t \left( \Pr[ F_{2x+t} < x ] + \Pr[F_{2x-t-1} \geq x] \right).\label{eq:T_x(a)}
    \end{align}
    We shall take $a = a(x)$ that grows to infinity with $x$ sufficiently slow.
    The following three claims prove Lemma~\ref{lem:VarCalculation}.

    \begin{claim}\label{claim:lim H_x}
    Let $A,a>0$ and $x \in \N$ be such that $a \leq \sqrt{x}$.
    Then, for $H_x(a)$ as in~\eqref{eq:H_x(a)}
    the following holds.
    \[
        \left|
        \frac{1}{x} \cdot H_x(a) -
        \frac{1}{x} \cdot \sum_{t=0}^{\floor{a\sqrt{x}}}
        2 t \cdot \Phi \left( \frac{-t}{\sqrt{8A x}} \right)
        \right|
        \le \frac{Ca^4}{\sqrt{x}}.
    \]
    \end{claim}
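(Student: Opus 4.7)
The plan is to estimate each tail probability $\Pr[F_{2x+t}<x]$ and $\Pr[F_{2x-t-1}\ge x]$ individually by $\Phi(-t/\sqrt{8Ax})$ via the Berry--Esseen theorem, and then sum the pointwise errors. Since $F_n$ is a sum of independent Bernoulli summands, this is a natural regime for Berry--Esseen.

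First, I would compute the mean and variance of $F_n$ from the hypotheses. By the assumptions $|\ov{p}_n - \tfrac12|\le K/n$ and $|A_n - A|\le K/n$, we have
\[
\E[F_n] = n(1-\ov{p}_n) = \tfrac{n}{2} + O(1), \qquad \Var(F_n) = \sum_{i=1}^n p_i(1-p_i) = n A + O(1).
\]
For $n = 2x+t$ with $0 \le t \le a\sqrt{x}$ (and $a \le \sqrt x$), this gives $\E[F_n] = x + t/2 + O(1)$ and $\Var(F_n) = 2Ax + O(t)+O(1)$. Applying Berry--Esseen to the standardized sum $(F_n - \E[F_n])/\sqrt{\Var(F_n)}$ yields
\[
\Bigl|\Pr[F_{2x+t} < x] - \Phi(z_{t,x})\Bigr| \le \frac{C}{\sqrt{x}}, \qquad z_{t,x} := \frac{x - \E[F_{2x+t}]}{\sqrt{\Var(F_{2x+t})}}.
\]

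Next, I would approximate $z_{t,x}$ by $-t/\sqrt{8Ax}$. Writing $\sqrt{\Var(F_{2x+t})} = \sqrt{2Ax}\cdot(1 + O(t/x) + O(1/x))^{1/2}$ and Taylor-expanding, one gets
\[
z_{t,x} = \frac{-t/2 + O(1)}{\sqrt{2Ax}}\Bigl(1 + O(t/x) + O(1/x)\Bigr) = \frac{-t}{\sqrt{8Ax}} + O\!\left(\frac{1}{\sqrt{x}}\right) + O\!\left(\frac{t^2}{x^{3/2}}\right).
\]
Since $t \le a\sqrt{x}$, the last error is at most $O(a^2/\sqrt{x})$. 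Because $\Phi$ is $1$-Lipschitz up to the bound on the Gaussian density, the corresponding error in $\Phi$-values is also $O(a^2/\sqrt{x})$. Combined with the Berry--Esseen bound, I obtain
\[
\Bigl|\Pr[F_{2x+t} < x] - \Phi(-t/\sqrt{8Ax})\Bigr| \le \frac{C a^2}{\sqrt{x}}
\]
for a constant $C$ depending only on $p$. A symmetric calculation, using $\Phi(-y) = 1 - \Phi(y)$ and $\E[F_{2x-t-1}] = x - t/2 + O(1)$, gives the same bound for $|\Pr[F_{2x-t-1}\ge x] - \Phi(-t/\sqrt{8Ax})|$.

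Finally, I sum these pointwise errors with weight $t$ for $t = 0,\dots,\lfloor a\sqrt{x}\rfloor$:
\[
\left| H_x(a) - \sum_{t=0}^{\lfloor a\sqrt{x}\rfloor} 2t\,\Phi(-t/\sqrt{8Ax}) \right|
\le \sum_{t=0}^{\lfloor a\sqrt{x}\rfloor} t \cdot O\!\left(\frac{a^2}{\sqrt{x}}\right)
= O\!\left(\frac{a^2}{\sqrt{x}}\cdot a^2 x\right) = O(a^4\sqrt{x}),
\]
and dividing by $x$ yields the claimed $O(a^4/\sqrt{x})$ bound. The main bookkeeping obstacle is the Taylor expansion producing the $t^2/x^{3/2}$ term, which is precisely what forces an $a^4$ dependence rather than $a^2$; I expect the rest of the calculation to be straightforward but requires care about which $O(\cdot)$ constants depend only on $p$ and which depend on $a$.
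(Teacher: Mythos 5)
Your proof is correct and follows essentially the same route as the paper: Berry--Esseen to replace each tail $\Pr[F_{2x\pm t}\lessgtr x]$ by a Gaussian tail, the Lipschitz bound on $\Phi$ to shift the argument to $-t/\sqrt{8Ax}$ with error $O(1/\sqrt{x})+O(t^2/x^{3/2})$, and then summing these errors against the weight $t$. The paper merely factors the first two steps differently, first proving an auxiliary statement (Claim~\ref{claim:BerryEsseen}) that lets it replace $\ov{q}_n$ by $\tfrac12$ and $A_n$ by $A$ in the standardization before plugging in $n=2x\pm t$, but the computation and the resulting $O(a^4/\sqrt{x})$ bound are the same.
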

    Here $\Phi$ is the cumulative distribution function of the
    normal distribution. Estimating the sum in Claim~\ref{claim:lim H_x} is a
    standard exercise in approximating sums by integrals: let us
    formulate it as a claim.
    \begin{claim}\label{claim:sum t Phi(t)}
    Let $A>0$ be a constant, and let $a = a(x)$. Then
    \[
        \lim_{x \to \infty} \frac{1}{x}
        \sum_{t=0}^{\floor{a\sqrt{x}}}
        2 t \cdot \Phi \left( \frac{-t}{\sqrt{8Ax}} \right) = 4A.
    \]
    Furthermore, the rate of convergence in at most $O(\frac{a}{\sqrt{x}})$, that is
    \[
        \frac{1}{x} \sum_{t=0}^{\floor{a\sqrt{x}}}
        2 t \cdot \Phi \left( \frac{-t}{\sqrt{8Ax}} \right) = 4A +
        O\Big(\frac{a}{\sqrt{x}} + \exp(-ca)\Big).
    \]
    \end{claim}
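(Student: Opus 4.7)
The plan is to recognize the sum as a Riemann approximation of an explicit Gaussian integral. Substitute $s=t/\sqrt{x}$, so $t=s\sqrt{x}$ ranges through the arithmetic progression with step $h=1/\sqrt{x}$. The sum rewrites as
\[
\frac{1}{x} \sum_{t=0}^{\floor{a\sqrt{x}}} 2t\,\Phi\!\left(\tfrac{-t}{\sqrt{8Ax}}\right)
= h \sum_{k=0}^{\floor{a/h}} f(kh),
\qquad f(s) := 2s\,\Phi\!\left(\tfrac{-s}{\sqrt{8A}}\right).
\]
So the goal is to show $h \sum_{k=0}^{\floor{a/h}} f(kh) = \int_0^\infty f(s)\,ds + O(a/\sqrt{x} + \exp(-ca))$ and to compute $\int_0^\infty f(s)\,ds = 4A$.

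For the explicit value of the integral, substitute $u=s/\sqrt{8A}$ to get $\int_0^\infty f(s)\,ds = 16A\int_0^\infty u\,\Phi(-u)\,du$. Integrating by parts with $dv=u\,du$, $w=\Phi(-u)$ kills the boundary terms (since $u^2\Phi(-u)\to 0$ as $u\to\infty$) and leaves $\int_0^\infty \tfrac12 u^2 \phi(u)\,du = \tfrac14$, giving the value $4A$.

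For the Riemann sum error, note that $f$ is bounded on $[0,\infty)$ with $\|f'\|_\infty \le C(A)$: indeed $f'(s) = 2\Phi(-s/\sqrt{8A}) - \tfrac{2s}{\sqrt{8A}}\phi(s/\sqrt{8A})$, which is bounded uniformly. On each interval $[kh,(k+1)h]$ of length $h$, the left Riemann error is $O(\|f'\|_\infty h^2)$; summing over $\floor{a/h}+1$ intervals (a total length of order $a$) yields total error $O(a h) = O(a/\sqrt{x})$. For the tail, since $\Phi(-u)\le \phi(u)/u$ we have $f(s) \le C e^{-s^2/(16A)}$, so $\int_a^\infty f(s)\,ds = O(e^{-a^2/(16A)})$. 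For $a\ge 1$ this is at most $O(\exp(-ca))$ for a suitable $c=c(A)>0$; for $a\le 1$ the estimate $O(\exp(-ca))$ is trivial since the right-hand side is $\Omega(1)$ and the left-hand side is $O(1)$. Combining the integral value, the Riemann error, and the tail error gives the claim.

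There is no serious obstacle here: the integral is evaluated in closed form by one integration by parts, and the Riemann-sum bound is standard since $f$ and $f'$ are bounded uniformly on $[0,\infty)$. The only point requiring minor care is packaging the Gaussian tail $e^{-a^2/C}$ into the weaker exponential form $e^{-ca}$ stated in the claim, which holds trivially in both the regimes $a\ge 1$ and $a\le 1$.
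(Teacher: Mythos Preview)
Your proof is correct and follows essentially the same route as the paper: both approximate the sum by an integral using the uniform Lipschitz bound on $f$, evaluate the full integral via a linear change of variable and one integration by parts to get $4A$, and bound the tail by the Gaussian decay of $\Phi$, then relax $e^{-ca^2}$ to the stated $e^{-ca}$. The only cosmetic difference is that you rescale first (writing $f$ in the variable $s=t/\sqrt{x}$) and then pass to the integral, whereas the paper passes to the integral in $t$ and then changes variables; the resulting error bounds are identical.
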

    Finally, for the tail we have the following estimate.
    \begin{claim}\label{claim:limT_x(a) = o(1)}
    Let $A>0$ be a constant, and let $a = a(x)$.
    Then, for all $x \in \N_0$ sufficiently large it holds that
    \[
        \frac{1}{x} \cdot T_x(a)  \leq C\exp(-ca).
    \]
    \end{claim}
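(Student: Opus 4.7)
The plan is to use the assumption $|\ov{p}_n-\half|\le K/n$ to compute $\E[F_n]$ precisely, then use Hoeffding's inequality on each probability appearing in $T_x(a)$, and finally sum the resulting Gaussian-type tails.

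First I would observe that $F_n=\sum_{i=1}^n B'_i$ is a sum of $n$ independent $[0,1]$-valued random variables with $\E[F_n]=n(1-\ov{p}_n)=n/2+O(1)$, using the hypothesis on $\ov{p}_n$. In particular $\E[F_{2x+t}]=x+t/2+O(1)$ and (for $t\le 2x-2$) $\E[F_{2x-t-1}]=x-(t+1)/2+O(1)$. For $t\ge a\sqrt x$ with $a$ sufficiently large, the $O(1)$ correction is absorbed into a factor of (say) $1/2$, so Hoeffding's inequality gives
\[
\Pr[F_{2x+t}<x]+\Pr[F_{2x-t-1}\ge x]\;\le\;4\exp\!\Big(-\frac{c\,t^2}{x+t}\Big)
\]
for some $c>0$ depending only on $p$; for $t>2x-2$ the second probability is simply $0$, which is even better.

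Next I would split the tail sum at $t=x$. For $a\sqrt{x}\le t\le x$ we have $t^2/(x+t)\ge t^2/(2x)$, so the contribution is at most
\[
\sum_{t>a\sqrt x} 8t\exp\!\Big(-\frac{c t^2}{2x}\Big)
\;\le\; C\,x\int_{a}^{\infty} s\,e^{-cs^2/2}\,ds
\;\le\; C'\,x\,e^{-ca^2/2},
\]
after the substitution $t=s\sqrt x$ and a routine sum-to-integral comparison. For $t>x$ we use $t^2/(x+t)\ge t/2$, so the contribution is bounded by $\sum_{t>x} 8t e^{-ct/2}$, which is exponentially small in $x$ and thus negligible. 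Dividing by $x$ and using $e^{-ca^2/2}\le e^{-c'a}$ (valid for $a\ge 2/c$) yields the claim; for small $a$ the bound is vacuous after possibly increasing the constant $C$.

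The only subtlety, which is not really a serious obstacle, is ensuring that the $O(1)$ shift in $\E[F_n]$ is dominated by $t$ throughout the range of summation; this is where one uses that $t$ is at least of order $\sqrt{x}$ and (if needed) that $a$ exceeds an absolute constant, absorbing smaller values into the multiplicative constant $C$. Everything else is a direct combination of Hoeffding's inequality with standard Gaussian tail estimates, so the proof will be short.
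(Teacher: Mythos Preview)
Your argument is correct and is essentially the same as the paper's: both bound each term by a sub-Gaussian tail via Hoeffding and then sum. The only cosmetic difference is that the paper packages the Hoeffding step by citing the already-proved concentration bound for $U_p$ (Proposition~\ref{prop:concentration of U}) and rewriting $T_x(a)$ as $\sum_{t>a\sqrt x} t\,\Pr[|U_p(x)-x|>t]$, then sums in blocks of length $\sqrt x$, whereas you apply Hoeffding directly to $F_n$ and pass to an integral; the resulting estimates are the same up to constants.
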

\noindent
    In all three claims the constants $c$, $C$ and the constants
    implicit in the $O()$ notation depend on $p$
    but are independent of $a$ and $x$.

    The lemma follows by letting $a = K\ln(x)$ for some $K$
    sufficiently large and applying the claims.
    By Claim~\ref{claim:limT_x(a) = o(1)} we get that
    \[
        \frac{1}{x} \cdot T_x(a)  \ll \frac{1}{\sqrt{x}}.
    \]
    By combining Claims~\ref{claim:lim H_x}
    and~\ref{claim:sum t Phi(t)} we get that
    \[
        \frac{1}{x} \cdot H_x(a) = 4A +
        O\bigg(\frac{\log^4(x)}{\sqrt{x}}\bigg).
    \]
    This proves~\eqref{eq:H_x+T_x},
    which, in turn, concludes the proof of Lemma~\ref{lem:VarCalculation}.
    \end{proof}

\begin{proof}[Proof of Claim~\ref{claim:limT_x(a) = o(1)}]
    By Proposition~\ref{prop:concentration of U} the random variable
    $U_p(x)$ is concentrated, and hence
    \begin{eqnarray*}
    T_x(a)
        & = & \sum_{t=\floor{a\sqrt{x}} + 1}^\infty t \cdot \Pr[|U_p(x)-x|>t] \\
        & \leq & \sum_{i=a}^\infty \sum_{t=\floor{i\sqrt{x}} + 1}^{\floor{(i+1)\sqrt{x}}} t \cdot \exp(-\frac{ct^2}{2x+t}).
    \end{eqnarray*}
    The inner sum has at most $\sqrt{x}$ terms, each upper bounded by
    $4i\sqrt{x} \cdot \exp(-\frac{ci^2 x}{2x+i\sqrt{x}})
    \leq 4i\sqrt{x} \cdot \exp(-c' i)$.
    Therefore
    \begin{align*}
    T_x(a)
        & \leq \sum_{i=a}^\infty \sqrt{x} \cdot 4i \sqrt{x}  \cdot \exp(-c'i ) \\
        & \leq 4x\sum_{i=a}^\infty i \cdot \exp(-c'i)
        \leq Cx \exp(- c'a).\qedhere
    \end{align*}
\end{proof}

\begin{proof}[Proof of Claim~\ref{claim:lim H_x}]
    Recall the definition of $F_n$ in~\eqref{eq:defFn},
    and denote $\sigma_i^2=\E [(B'_i-q_i)^2] = p_iq_i$
    and $\rho_i=\E[|B_{i}-q_i|^3]$ for all $i \in \N$.
    By the Berry-Esseen theorem
    (\cite{berry1941accuracy,Esseen1942})
    there exists an absolute constant $C_0$ so that for all $\a \in \R$
    it holds that
    \[
        \left| \Pr\left[ \frac{ F_n-n \ov{q}_n }{ \sqrt{nA_n} } \leq \a \right] -\Phi(\a) \right|
        \leq
        C_0 \cdot \left( \sum_{i=1}^n \sigma_i ^2 \right)^{-3/2} \cdot \left( \sum_{i=1}^n \rho_i \right),
    \]
    where $A_n$ is defined in~\eqref{eq:defAn}, and $\Phi$ is the cumulative
    distribution function of the normal variable $\Norm(0,1)$.
    In our case we have $\rho_i\le1$
    for all $i \in \N$, and hence
    \[
        \left( \sum_{i=1}^n \sigma_i ^2 \right)^{-3/2} \cdot \left( \sum_{i=1}^n \rho_i \right)
        \leq  (n A_n)^{-3/2} \cdot n \leq Cn^{-1/2}.
    \]
    %where $\tilde{A}=\sup_n\{A_n^{-\frac{3}{2}}\}$.
    Therefore, for all $\a \in \R$ we have
    \begin{equation}\label{eq:BerryEsseen-A_n}
        \Pr\left[ \frac{ F_n-n \ov{q}_n }{ \sqrt{nA_n} } \leq \a
          \right] = \Phi(\a) + O(n^{-1/2}),
    \end{equation}
    where the constant implicit in the $O$ notation depends only on $p_i$'s.

    We next observe that we may replace $A_n$ with $A$ and $\bar{q}_n$
    with $\half$ in~\eqref{eq:BerryEsseen-A_n}.
    This is summarized in the following claim.
    \begin{claim}\label{claim:BerryEsseen}
    For all $n \in \N$
    and for all $\a \in \R$ such that $|\a| \leq \sqrt{\frac{n}{2A}}$ it holds that
    \[
        \Pr\left[ \frac{ F_n - \half n }{ \sqrt{nA} } \leq \a \right]
        = \Phi(\a) + O(n^{-1/2}),
    \]
    where the constant implicit in the $O$ notation depends only on $p_i$'s.
    \end{claim}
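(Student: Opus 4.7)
The plan is to deduce Claim~\ref{claim:BerryEsseen} directly from~\eqref{eq:BerryEsseen-A_n} by a simple change of variable. Rewriting the event $\{F_n \le \tfrac{n}{2} + \alpha\sqrt{nA}\}$ in the form $\{(F_n - n\ov{q}_n)/\sqrt{nA_n} \le \alpha'\}$ forces
\[
    \alpha' \;=\; \alpha\sqrt{A/A_n} \;+\; \frac{n(\tfrac12 - \ov{q}_n)}{\sqrt{nA_n}}.
\]
It then suffices to show that $\alpha' = \alpha + O(n^{-1/2})$ uniformly in $|\alpha| \le \sqrt{n/(2A)}$, since $\Phi$ is Lipschitz with constant $1/\sqrt{2\pi}$; this will give $\Phi(\alpha') = \Phi(\alpha) + O(n^{-1/2})$, which combined with~\eqref{eq:BerryEsseen-A_n} yields the claim.

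For the first summand of $\alpha'$, the hypothesis $|A_n - A| \le K/n$ together with $A > 0$ yields $\sqrt{A/A_n} = 1 + O(1/n)$ for all $n$ large enough that $A_n \ge A/2$. Combined with the range restriction $|\alpha| \le \sqrt{n/(2A)}$ this gives
\[
    \alpha\sqrt{A/A_n} \;=\; \alpha + O(|\alpha|/n) \;=\; \alpha + O(n^{-1/2}).
\]
For the second summand, since $\ov{q}_n = 1 - \ov{p}_n$, the hypothesis $|\ov{p}_n - \tfrac12| \le K/n$ reads $|n(\tfrac12 - \ov{q}_n)| \le K$, and hence the term is $O((nA_n)^{-1/2}) = O(n^{-1/2})$.

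There is no real obstacle here; the only thing to keep track of is that both sources of approximation contribute errors of order $n^{-1/2}$, matching the Berry--Esseen remainder itself. For the finitely many small values of $n$ where $A_n$ need not yet be close to $A$, the claim is automatic: its right-hand side is $O(1)$ while the left-hand side is a probability bounded by $1$.
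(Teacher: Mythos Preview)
Your proof is correct and follows essentially the same approach as the paper: both arguments rewrite the event in terms of $(F_n - n\ov{q}_n)/\sqrt{nA_n}$, identify the shifted argument $\alpha' = \alpha\sqrt{A/A_n} + n(\tfrac12 - \ov{q}_n)/\sqrt{nA_n}$, and use the Lipschitz property of $\Phi$ together with the hypotheses $|A_n - A| \le K/n$ and $|\ov{p}_n - \tfrac12| \le K/n$ to bound $|\alpha' - \alpha|$ by $O(n^{-1/2})$. Your presentation is slightly more direct, but the mathematics is identical.
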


    \begin{proof}
    We show below that
    \[
        \left| \Pr \left[ \frac{F_n-n\ov{q}_n}{\sqrt{nA_n}} \leq \a \right]
            - \Pr \left[ \frac{F_n -\half  n}{\sqrt{nA}} \leq \a
              \right] \right| \le Cn^{-1/2}.
    \]
    This, together with~\eqref{eq:BerryEsseen-A_n} will imply the claim.
    \begin{align*}
     \lefteqn{\left| \Pr \left[ \frac{F_n-\ov{q}_n n}{\sqrt{nA_n}} \leq \a \right]
            - \Pr \left[ \frac{F_n - \half n}{\sqrt{nA}} \leq \a \right] \right|}\qquad\qquad&\\
     =\;&
     \left| \Pr \left[ \frac{F_n-\ov{q}_n n}{\sqrt{nA_n}} \leq \a \right]
            - \Pr \left[ \frac{F_n-\ov{q}_n n}{\sqrt{nA_n}}
                \leq \sqrt{\frac{A}{A_n}} \cdot \a + \frac{n(\half-\ov{q}_n)}{{\sqrt{n A_n}}} \right] \right| \\
%            \stackrel{(*)}{\leq} \; &
%     \left| \Pr \left[ \frac{F_n- \ov{q}_n n}{\sqrt{nA_n}} \leq \a \right]
%            - \Pr \left[ \frac{F_n- \ov{q}_n n}{\sqrt{nA_n}}
%                \leq \sqrt{\frac{A}{A_n}} \cdot \a + \frac{K}{\sqrt{n A_n}} \right] \right| \\
     \stackrel{\textrm{\eqref{eq:BerryEsseen-A_n}}}{\leq} &
     \left| \Phi(\a) - \Phi\bigg(\sqrt{\frac{A}{A_n}} \cdot \a +
                \frac{n(\frac 12-\ov{q}_n)}{\sqrt{n A_n}}\bigg)
     \right| + Cn^{-1/2}.
    \end{align*}
%    where $(*)$ is using $|\ov{q}_n - \half| \leq \frac{K}{n}$.
    Using the fact that the function $\Phi$ is $\frac{1}{\sqrt{2\pi}}$-Lipschitz together with
    the assumptions that $|A_n - A| < \frac{K}{n}$ and $|n(\frac
    12-\ov{q}_n)|\le K$ the difference is bounded by
    \begin{align*}
     \lefteqn{
       \left| \Pr \left[ \frac{F_n- \ov{q}_n n}{\sqrt{nA_n}} \leq \a \right]
            - \Pr \left[ \frac{F_n-\half n}{\sqrt{nA}} \leq \a \right] \right|}\qquad\qquad&\\
     & \leq
     \frac{1}{\sqrt{2\pi}} \left( \left|\sqrt{\frac{A}{A_n}} - 1 \right| \cdot \a
     + \frac{K}{\sqrt{n A_n}} \right)+ Cn^{-1/2} \\
     & \leq
     \frac{1}{\sqrt{2\pi}} \left( \frac{C' K}{n A} \cdot \a + \frac{K}{\sqrt{n A -
         K}} \right)+ Cn^{-1/2} \le C''n^{-1/2},
    \end{align*}
    for some constants $C,C',C'' > 0$.
    \end{proof}
    We now return to the proof of Claim~\ref{claim:lim H_x}.
    Recall the definition of $H_x(a)$ in~\eqref{eq:H_x(a)},
    \[
        H_x(a)= \sum_{t=0}^{\floor{a\sqrt{x}}}t \left( \Pr[ F_{2x+t} < x ] + \Pr[F_{2x-t-1} \geq x] \right).
    \]
    Let us rewrite $H_x(a)$ as follows
    \begin{align*}
        H_x(a)&= \sum_{t=0}^{\floor{a\sqrt{x}}}
        t \cdot
        \left( \Pr \left[ \frac{F_{2x+t}-\frac{2x+t}{2}}{\sqrt{(2x+t)A}} < \frac{-t}{2\sqrt{(2x+t)A}} \right]\right.\\
        &\qquad\qquad+ \left.\Pr \left[\frac{F_{2x-t-1} - \frac{2x-t-1}{2}}{\sqrt{(2x-t-1)A}} \geq \frac{t+1}{2\sqrt{(2x-t-1)A}} \right] \right).
    \end{align*}
    By Claim~\ref{claim:BerryEsseen} we have
    \[
        H_x(a)= \sum_{t=0}^{\floor{a\sqrt{x}}}
        t \cdot
        \left( \Phi\bigg( \frac{-t}{2\sqrt{(2x+t)A}} \bigg)
        + 1 - \Phi\bigg(  \frac{t+1}{2\sqrt{(2x-t-1)A}} \bigg) + O(x^{-1/2})
        \right),
    \]
    where the last term $O(x^{-1/2})$ is the error term from
    Claim~\ref{claim:BerryEsseen} applied twice with $n = 2x+t$ and
    with $n=2x-t-1$ for each $t \geq 0$. Therefore, by the assumption
    that $a \leq \sqrt{x}$, for $x$ sufficiently large
    the error term $O(n^{-1/2})$ is bounded by $O(x^{-1/2})$.
    Since $\Phi$ is $\frac{1}{\sqrt{2\pi}}$-Lipschitz on $\R$, it
    follows that for $t \geq 0$ we have
    \[
        \left| \Phi\left( \frac{-t}{2\sqrt{(2x+t)A}} \right)
        - \Phi\left( \frac{-t}{\sqrt{8xA}} \right) \right|
        \le
        \frac 1{\sqrt{2\pi}}\left|\frac{-t}{2\sqrt{(2x+t)A}} - \frac{-t}{2\sqrt{2xA}}\right| %\leq
%\frac{t^2}{A^{\half} x^{\frac{3}{2}}}
        \le
        \frac{Ct^2}{x^{3/2}}.
    \]
    An analogous calculation gives
    \[
        \left| \Phi\left(  \frac{t+1}{2\sqrt{(2x-t-1)A}} \right)
        - \Phi\left( \frac{t}{\sqrt{8xA}} \right) \right|
        \le \frac{Ct^2}{x^{3/2}}.
    \]
    Therefore,
    \begin{multline*}
        \left|
        H_x(a) -
        \sum_{t=0}^{\floor{a\sqrt{x}}}
        t \cdot
        \left( \Phi \left( \frac{-t}{\sqrt{8A x}} \right)
        + 1 - \Phi \left( \frac{t}{\sqrt{8A x}} \right)
        \right) \right|\\
         \leq
        C\sum_{t=0}^{\floor{a\sqrt{x}}}
        t \cdot \left(\frac{t^2}{x^{3/2}} + x^{-1/2}\right)
        \leq Ca^4 x^{1/2}.
    \end{multline*}
    The claim follows from the fact that $\Phi(\a) = 1 - \Phi(-\a)$ for all $\a \in \R$.
\end{proof}

\begin{proof}[Proof of Claim~\ref{claim:sum t Phi(t)}]
    Since the function
    $f(t)=2t \cdot \Phi\left( \frac{-t}{\sqrt{8Ax }} \right)$
    is Lipschitz on $\R$ (with a constant independent of $x$), it follows that
    \[
        \left| \sum_{t=0}^{\floor{a\sqrt{x}}} f(t) - \int_0^{a\sqrt{x}} f(t)\, dt \right| \leq Ca\sqrt{x}.
    \]
    To estimate the integral, do a linear change of variable and get
\[
    \int_0^{a\sqrt{x}} 2 t \cdot \Phi \left( \frac{-t}{\sqrt{8A x}} \right) dt
     =
        16A x \int_{-\sqrt{a/8A}}^0 u \cdot \Phi \left( u \right) du.
\]
Write
\[
\int_{-\sqrt{a/8A}}^0 = \int_{-\infty}^0 -
\int_{-\infty}^{-\sqrt{a/8A}}.
\]
The first integral can be show to be equal to $\frac 14$ with a simple
integration by parts. The second can be bounded by $C\exp(-ca)$ since
$\Phi(x)\le C\exp(-cx^2)$. This completes the proof of
Claim~\ref{claim:sum t Phi(t)} and hence also of that of
Lemma~\ref{lem:VarCalculation}.
\end{proof}

%%%%%%%%%%%%%%%%%%%%%%%%%%%%%%%%%%%%%%%%%%%%%%%%%%%%%%%%%%%%%%%%%%%%%%%%%%%%%%%
\section{Application: the periodic case}\label{sec:periodic}
%%%%%%%%%%%%%%%%%%%%%%%%%%%%%%%%%%%%%%%%%%%%%%%%%%%%%%%%%%%%%%%%%%%%%%%%%%%%%%%

In this section we prove Theorem~\ref{thm:periodic env}.
Recall that $\ov{p}$ is the average of
the $p_i$, and that $\theta$ given in \eqref{eq:periodic theta} is defined as
\begin{equation}\label{eq:periodic theta2}
        \theta(p_1,\dots,p_M)=
        \frac{\sum_{i=1}^M \delta_i (1-p_{i})}
            {4 \sum_{j=1}^M p_j(1-p_j)},
    \end{equation} where $\delta_i=\sum_{j=1}^{i} (2p_j - 1)$.
We need to prove that if $\ov{p}\ne \half$ then the transience of $Z$
depends on whether $\ov{p}$ is smaller or larger than $\half$, and if
$\ov{p}=\half$ then its transience depends on $\theta$.

In order to prove the theorem let us fix $p = (p_1,\dots,p_M) \in (0,1)^M$,
and let $\ov{p} = \frac{1}{M}\sum_{i=1}^M p_i$. Let $U_p$ be the step distribution of the Markov chain $Z^+$
defined by the environment $\om(p_1,\dots,p_M)$.
We wish to apply Theorem~\ref{thm:Fundamental}
in order to prove Theorem~\ref{thm:periodic env}.
Recall the parameters of $U_p$ considered in Theorem~\ref{thm:Fundamental}.
\begin{align*}
\mu &= \lim_{x \to \infty} \frac{\E(U_p(x))}{x} &
\rho(x) &= \E[U_p(x)-\mu x] \\
\nu(x) &= \frac{\E[(U_p(x)-\mu x)^2]}{x} &
\theta(x) &=\frac{2\rho(x)}{\nu(x)},
\end{align*}
Let $\rho$, $\nu$, and $\theta$ be the corresponding limits
whenever they exist. The following proposition supplies the ingredients
required for the proof of Theorem~\ref{thm:periodic env}.
\begin{prop}\label{prop:periodic parameters}
    Let $M \in \N$, and let $p = (p_1,\dots,p_M) \in (0,1)^M$ be a periodic cookie environment.
    Let $U_p$ be the step distribution of the corresponding Markov chain $Z^+$.
    Then
    \begin{enumerate}
    \item\label{item:periodic mu}
        We have $\mu = \frac{\ov{p}}{1-\ov{p}}$.
        In particular, $\ov{p} > \half$ if and only if $\mu > 1$,
        and $\ov{p} < \half$ if and only if $\mu < 1$.
    \item
        Suppose that $\ov{p} = \half$. Then
    \begin{enumerate}
        \item\label{item:periodic rho}
        Let
        \[
        \rho = \frac{2}{M} \sum_{i=1}^M (1-p_{i}) \cdot \sum_{j=1}^{j} (2p_i - 1).
        \]
        Then $|\rho(x) - \rho| \le \exp(- cx)$, where $c$ depends on $p$ but not on $x$.
        \item\label{item:periodic nu}
        Let
        \[
            \nu = 8 \cdot \frac{1}{M} \sum_{i=1}^M p_i(1-p_i).
        \]
        Then
        $|\nu(x) - \nu| \le C \log^4(x)/\sqrt{x}$, where $C$ depends on $p$ but not on $x$.
        \item\label{item:periodic theta}
        Let
        \[
            \theta = \frac{2 \rho}{\nu}.
        \]
        Then
        $|\theta(x) - \theta| \le C\log^4(x)/\sqrt{x}$, where $C$ depends on $p$ but not on $x$.
    \end{enumerate}
    \end{enumerate}
\end{prop}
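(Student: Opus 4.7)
My plan is to handle the four parts in order, with Part 2(a) being the main technical step and Parts 2(b) and 2(c) essentially routine applications of Lemma~\ref{lem:VarCalculation}. For Part~(1), I would observe that in a periodic environment $|\bar p_n - \bar p| \le M/n$, since each complete period contributes exactly $M\bar p$ to the partial sum. Proposition~\ref{prop:concentration of U} then applies and yields $\mu = \bar p/(1-\bar p)$; the equivalences $\mu\gtrless 1 \iff \bar p \gtrless \half$ are immediate.

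For Part 2(a), assume $\bar p = \half$, so $\mu = 1$, and set $N_x := U_p(x)+x$, the position of the $x$-th failure. At time $N_x$ there are exactly $x$ failures and $N_x-x$ successes, so
\[
    U_p(x)-x \;=\; N_x - 2x \;=\; \sum_{i=1}^{N_x}(2B_i-1) \;=\; S_{N_x} + \delta_{N_x}, \qquad S_n := \sum_{i=1}^n(2B_i - 2p_i).
\]
Here $S_n$ is a martingale with increments bounded by $2$, and $\E[N_x]<\infty$ by the concentration estimate of Proposition~\ref{prop:concentration of U}, so Doob's optional stopping theorem gives $\E[S_{N_x}]=0$, whence $\rho(x) = \E[\delta_{N_x}]$.

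The crucial feature of the case $\bar p=\half$ is that $\delta_n$ is periodic of period $M$, because $\delta_{n+M}-\delta_n = M(2\bar p -1)=0$. Hence $\delta_{N_x}$ depends only on the phase $Y_x := N_x \bmod M$. The sequence $(Y_k)_{k\ge 0}$ is a finite-state Markov chain on $\{1,\dots,M\}$, irreducible and aperiodic by ellipticity: the self-transition $i\to i$ occurs when the next failure is exactly $M$ trials later (i.e., all phases except $i$ give successes and phase $i$ gives a failure), which has positive probability. Therefore $Y_x$ mixes geometrically to a stationary distribution $\pi$, which I would identify via the ergodic theorem: $\pi_j$ is the long-run fraction of failures occurring at phase $j$, and counting failures in $K$ complete periods gives $\pi_j = K(1-p_j)/(KM\bar q) = 2(1-p_j)/M$. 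Hence $\rho(x) = \E[\delta_{N_x}] = \sum_j \pi_j \delta_j + O(e^{-cx}) = \rho + O(e^{-cx})$.

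Part 2(b) is a direct application of Lemma~\ref{lem:VarCalculation}: the same periodicity argument gives $|\bar p_n - \half|\le K/n$ and $|A_n-A|\le K/n$ with $A = (1/M)\sum p_i(1-p_i)$, and the lemma then delivers $\nu(x) = 8A + O(\log^4 x/\sqrt x)$, which matches the claimed formula. Part 2(c) follows by division: the exponentially small error in $\rho(x)$ is negligible against the $O(\log^4 x /\sqrt x)$ error from $\nu(x)$, and $\nu>0$ by ellipticity keeps the denominator bounded below. The main obstacle is the Markov-chain analysis in Part 2(a), namely verifying ergodicity of the phase chain and identifying $\pi$ explicitly; once this is in place, the martingale decomposition makes the rest of the argument essentially automatic.
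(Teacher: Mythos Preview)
Your proposal is correct. Items~(\ref{item:periodic mu}), (\ref{item:periodic nu}) and (\ref{item:periodic theta}) are handled essentially as in the paper (for Item~(\ref{item:periodic mu}) you take the shortcut of quoting the concentration proposition rather than recomputing $\mu=\langle\pi,E\rangle$ as the paper does in Lemma~\ref{lem:periodic mu}; both are fine). The interesting difference is in Item~(\ref{item:periodic rho}).

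The paper (Lemma~\ref{lem:periodic rho}) writes
\[
\rho(x)=\sum_{j=1}^M \E[U^{(j)}]\sum_{i=0}^{x-1}\big(\Pr[R_i=j]-\pi_j\big),
\]
uses exponential mixing of the phase chain $(R_i)$ to obtain convergence with rate $e^{-cx}$, and then determines the limit by introducing the shifted drifts $\rho^{(j)}$, deriving the $M-1$ one-step relations $\rho^{(j+1)}=\rho^{(j)}+1-2p_j$ and the closing relation $\sum_j\pi_j\rho^{(j)}=0$, and solving this linear system for $\rho=\rho^{(1)}$. Your route is different: the martingale decomposition $U_p(x)-x=S_{N_x}+\delta_{N_x}$ together with optional stopping (bounded increments, $\E[N_x]<\infty$) gives $\rho(x)=\E[\delta_{N_x}]$ in one stroke; since $\ov p=\tfrac12$ makes $\delta_n$ $M$-periodic, this is just $\E[\delta_{Y_x}]$ for the phase chain $Y_x=N_x\bmod M$, and the explicit formula $\rho=\sum_j\pi_j\delta_j=\frac{2}{M}\sum_j(1-p_j)\delta_j$ falls out directly from the stationary law. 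Your chain $Y_x$ is the paper's $(R_i)$ up to a shift by one, so your $\pi_j=2(1-p_j)/M$ agrees with Claim~\ref{claim:calc pi}. Your argument is shorter and bypasses the linear system; the paper's approach, on the other hand, yields all the $\rho^{(j)}$ simultaneously via~\eqref{eq:periodic rho_j formula}.

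One small point: your ergodic-theorem identification of $\pi$ (``long-run fraction of failures at phase $j$'') is correct but should be made precise by a SLLN argument on the counts in $KM$ trials, or else by simply checking $\pi P=\pi$ directly as in Claim~\ref{claim:calc pi}.
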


Theorem~\ref{thm:periodic env} is now a simple corollary of
Theorem~\ref{thm:Fundamental}, Proposition~\ref{prop:periodic parameters} and Theorem~\ref{thm:Z char transience}. Here are the details:
\begin{proof}[Proof of Theorem~\ref{thm:periodic env} given
    Proposition~\ref{prop:periodic parameters}]
    Let $p = (p_1,\dots p_M) \in (0,1)^M$ be a periodic environment.
    We shall consider the Markov chains $Z^+$ and $Z^-$ defined by $\om(p)$,
    and the corresponding step distributions $U_p$ and $U_q$, where
    $q = (q_i=1-p_i)_{i \in \N}$. Recall that by
    Proposition~\ref{prop:concentration of U} the step distributions
    $U_p(x)$ and $U_q(x)$ are concentrated, as required in the conditions
    of Theorem~\ref{thm:Fundamental}.

    Suppose first that $\ov{p} > \half$ and consider the step distribution
    $U_p(x)$ that corresponds to the Markov chain $Z^+$ defined by $\om(p)$.
    Then, by the first item of Proposition~\ref{prop:periodic parameters} we have $\mu > 1$, and thus
    by Theorem~\ref{thm:Fundamental} it holds that
    $Z^+_n$ is transient. Therefore, by
    Theorem~\ref{thm:Z char transience} the ERW in $\om(p)$ is right transient
    a.s.

    Analogously, if $\ov{p} < \half$, then if we consider the Markov chain
    $Z^-$ defined by $\om(p)$ we get that $\mu < 1$, and thus
    by Theorem~\ref{thm:Fundamental} it holds that
    $Z^-_n$ is transient. Therefore, by
    Theorem~\ref{thm:Z char transience} the ERW in $\om(p)$ is left
    transient a.s.

    Suppose now that $\ov{p} = \half$, which corresponds to $\mu = 1$ for
    both $Z^+$ and $Z^-$. Suppose first that $\theta(p_1,\dots,p_M) > 1$. Then, by
    Theorem~\ref{thm:Fundamental} we have
    $Z^+_n$ is transient, and thus, by
    Theorem~\ref{thm:Z char transience} the ERW in $\om(p)$ is right
    transient a.s.

    Analogously if $\ov{p} = \half$ and $\theta(1-p_1,\dots,1-p_M) > 1$.
    Then, by Theorem~\ref{thm:Fundamental} we have
    $Z^-_n$ is transient, and thus, by
    Theorem~\ref{thm:Z char transience} the ERW in $\om(p)$ is left
    transient a.s.

    Finally, if both $\theta(p_1,\dots,p_M) \leq 1$ and
    $\theta(1-p_1,\dots,1-p_M) \leq 1$,
    then by Theorem~\ref{thm:Fundamental} we have
    $\Pr[Z^+_n=0 \mbox{ for some } n] = 1$
    and $\Pr[Z^-_n=0 \mbox{ for some } n] = 1$.
    Therefore, by Theorem~\ref{thm:Z char transience} the ERW in
    $\om(p)$ is recurrent a.s.
\end{proof}

We now turn to the proof of Proposition~\ref{prop:periodic parameters}.
Let $U_p$ be the step distribution of $Z^+$
defined by the periodic environment $\om(p_1,\dots,p_M)$.
Recall (Definition~\ref{def:U} on page~\pageref{def:U}) that $U_p(x)$ is
the number of successes in a sequence of Bernoulli trials with
periodic parameters until $x$ failures. Suppose we already counted how
many successes we had up to the first $i$ failures and we wish to
proceed to $i+1$. Because the cookies $p_i$ are periodic, we do not
need to remember our exact ``position'' in the pile of cookies, but
only its value modulo $M$. These values form a \emph{Markov chain}
with $M$ states, with $i$ being the time. Thus, we arrived at a description of $U_p$ in terms of
two sequence: the Markov chain of the values modulo $M$ (which we will
denote by $R_i$) and the number of failures at the $i^\textrm{th}$
step (which we will denote by $g_i$). Here is a more formal
description.

\begin{definition}\label{def:Rjgj}
    For a periodic cookie environment $p \in [0,1]^{\N}$, and for $j\in [M]$
    let $U^{(j)} = U_{s^j(p)}(1)$ be the number of successes in a sequence
    of Bernoulli trials with probabilities $p_j,p_{j+1},\dotsc$ until the
    first failure.%
    \footnote{Recall that $s^j(p) = (p_{j},p_{j+1},\dots,p_{j-1})$ is the
    left shift by $j-1$ of the environment $p$.}
    Define two sequences $\left(R_i \in [M]\right)_{i \geq 0}$ and
    $\left(g_i \in \N_0\right)_{i \geq 0}$ as follows.
    We start with $R_0=1$ and $g_0$ distributed as $U^{(1)}$.
    Inductively, for each $i \in \N$ define $R_i = R_{i-1}+ g_{i-1} + 1 \pmod M$,
    and define $g_i$ to be distributed as $U^{(R_i)}$. Other than the
    dependency on $R_i$, the random variable $g_i$ is independent of all
    previous $\{g_j,R_j:j < i\}$.
    Informally speaking, $R_i$ represents the location $\pmod M$ of the next
    available cookie after the $i^{\mathrm{th}}$ failure, and $g_i$ represents
    the number of successes between the $i^\textrm{th}$ and the $(i+1)^\textrm{st}$ failure.
\end{definition}
We show below that $\{R_i: i \geq 0\}$ is a Markov chain,
and that $U_p(x) = \sum_{i=0}^{x-1} g_i$.

\begin{claim}\label{claim:U = sum gi}
Let $(R_i)_{i \geq 0}$ and $(g_i)_{i \geq 0} $
be as above. Then
    \begin{enumerate}
    \item
    $\sum_{i=0}^{x-1}g_i$ is distributed according to $U_p(x)$.
    \item
    $(R_i)_{i \geq 0}$ is a Markov chain on $[M]$
    with transition matrix
    $P=(P_{j,k})_{j,k \in [M]}$ given by
    \[
        P_{j,k}=\Pr[R_i = k|R_{i-1} = j]
            =\frac{a_{j,k}(1-p_{k-1})}{1-p_1 \cdot p_2 \cdots p_M},
    \]
    where
    \[
        a_{j,k}:=\begin{cases}
          1 & j=k-1\\
          p_jp_{j+1}\cdots p_{k-2}&\textrm{otherwise.}
        \end{cases}
    \]
    \end{enumerate}
    In particular, since $p_i \in (0,1)$ for all $i \in [M]$, the Markov
    chain $(R_i)_{i \geq 0}$ is irreducible and aperiodic, and,
    therefore, has a unique stationary distribution $\pi=(\pi_1,\dots,\pi_M)$.
\end{claim}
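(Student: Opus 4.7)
The plan is to prove the two parts of the claim in turn, with most of the work done by a direct bookkeeping of the periodic Bernoulli trials that define $U_p$.

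For part (1), I would decompose the sequence of Bernoulli trials defining $U_p(x)$ into $x$ blocks, where the $i^\textrm{th}$ block ($0 \le i \le x-1$) consists of all successes strictly between the $i^\textrm{th}$ and $(i{+}1)^\textrm{st}$ failures, followed by that $(i{+}1)^\textrm{st}$ failure. Let $r_i \in [M]$ denote the residue modulo $M$ of the index of the first cookie used in block $i$, so that $r_0 = 1$. By periodicity, the number of successes in block $i$, conditionally on $r_i$, is distributed as $U^{(r_i)}$. Since block $i$ consumes exactly $g_i + 1$ cookies before block $i+1$ begins, we have $r_{i+1} = r_i + g_i + 1 \pmod M$, matching the recursion defining $(R_i, g_i)$ in Definition~\ref{def:Rjgj}. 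Thus the joint laws coincide, and $U_p(x) = \sum_{i=0}^{x-1} g_i$.

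For part (2), the Markov property of $(R_i)$ follows directly from the recursive definition, because $R_{i+1}$ is a function of $R_i$ and $g_i$, and the conditional law of $g_i$ given the past depends only on $R_i$. To derive the transition probabilities, I would compute
\[
    P_{j,k} = \Pr\big[U^{(j)} \equiv k - j - 1 \pmod M\big].
\]
Letting $\ell_0 \in \{0,1,\dots,M-1\}$ be the minimal representative, the event $\{U^{(j)} = \ell_0 + mM\}$ corresponds to seeing $\ell_0 + mM$ consecutive successes followed by a single failure. By periodicity this event has probability
\[
    a_{j,k} \cdot (1 - p_{k-1}) \cdot (p_1 p_2 \cdots p_M)^m,
\]
where $a_{j,k}$ is the product of cookies $p_j, p_{j+1}, \dots, p_{k-2}$ over the first partial lap (with the empty product convention $a_{j,k} = 1$ when $k = j+1$), the factor $1 - p_{k-1}$ is the terminal failure at position $k-1$, and each additional full lap contributes $p_1 \cdots p_M$. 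Summing the geometric series in $m$ yields the stated formula.

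Finally, irreducibility and aperiodicity of $(R_i)$ are immediate once the explicit formula is in hand: the numerator $a_{j,k}(1 - p_{k-1})$ is a positive product, and the denominator $1 - p_1 \cdots p_M$ is positive under the ellipticity assumption $p_i \in (0,1)$, so every entry $P_{j,k}$ is strictly positive and a unique stationary distribution $\pi$ exists. The main obstacle is purely bookkeeping: one must track cyclic indexing carefully across successive failures in part (1) and handle the empty-product edge case $k = j+1$ in part (2); beyond that, the proof is essentially a direct unpacking of the definitions.
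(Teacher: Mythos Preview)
Your proposal is correct and follows essentially the same approach as the paper: decomposing the Bernoulli sequence into blocks between successive failures for part~(1), and summing a geometric series over full periods to obtain the transition probabilities in part~(2). The paper's argument is slightly terser but structurally identical, including the handling of the cyclic indexing and the irreducibility/aperiodicity conclusion from positivity of all entries of~$P$.
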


Here and below expressions such as $p_j\cdots p_{k-2}$ should be read
``cyclically'' i.e. $p_j\cdots p_{k-2}$ if $j \leq k-2$ and
$p_j\cdots p_M p_1 \cdots p_{k-2}$ otherwise. The product always
contains between 1 and $M$ terms.

\begin{proof}
Recall that
$U_p(x)=\inf\{k \in \N: \sum_{i=1}^k (1-B_i)=x\}-x$,
where $B_i = B(p_i)$ are independent Bernoulli random variables.
Note that $g_0$ counts the number of successes until the first failure.
Hence, the $(g_0+1)^\textrm{st}$ Bernoulli trial is a failure, and $g_1$ starts
counting successes until the next failure, starting from $p_j$, where
$j = R_0 + g_0 + 1$.
The process continues until reaching $x$ failures, and $\sum_{i=0}^{x-1} g_i$
counts the number of successes until then.

For the second item, the fact that $(R_i)_{i \geq 0}$ is a Markov chain
follows from the definition of $R$, since the next step $R_i$ depends only
on $R_{i-1}$, as $g_{i-1}$ is defined by independent Bernoulli trials.

Finally we show the formula for $P_{j,k}$. We write the event ``the
first failure is when $i\equiv k-1 \pmod M$'' as a sum of the
probabilities that the first failure is at $k-1+tM$ for some
$t\in\{0,1,\dotsc\}$ (the case $t=0$ is irrelevant if $j>k-1$). We get
\begin{align*}
  P_{j,k} &= \sum_t p_j\cdot p_{j+1}\dotsb p_{k-2+tM}\cdot(1-p_{k-1+tM})\\
  &= (1-p_{k-1})\sum_{t=0}^\infty a_{j,k}\cdot(p_1\dotsb p_M)^t\\
  &= \frac{a_{j,k}\cdot(1-p_{k-1})}{1-p_1\dotsb p_M}\qedhere
\end{align*}
%where in the second and third rows we understand $p_j\dotsb p_{k-2}$
%cyclically. In particular if $j=k-1$ then this term is 1.
%This proves Claim~\ref{claim:U = sum gi}.
\end{proof}

%% In case that $j=k-1$, we have

%% \begin{eqnarray*}
%%     P_{j,k}
%%     & = & \Pr[U_j(1) \equiv j-1 \pmod M] \\
%%     & = & \sum_{t=1}^\infty \Pr[U_j(1) = Mt + j-1] \\
%%     & = & \sum_{t=1}^\infty (p_1 \cdot p_2 \cdots p_M)^t \cdot p_j \cdots p_{j-1} \cdot (1 - p_{j})  \\
%%     & = & (1 - p_{j})\cdot \sum_{t=0}^\infty (p_1 \cdot p_2 \cdots p_M)^t \\
%%     & = & \frac{(1 - p_{k-1})}{1-p_1p_2 \cdots p_M}.
%% \end{eqnarray*}

%% In case that $j\ne k-1$, we have

%% \begin{eqnarray*}
%%     P_{j,k}
%%     & = & \Pr[U_j(1) \equiv k-2 \pmod M] \\
%%     & = & \sum_{t=0}^\infty \Pr[U_j(1) = Mt + k-2] \\
%%     & = & \sum_{t=0}^\infty (p_1 \cdot p_2 \cdots p_M)^t
%%         \cdot p_j \cdot p_{j+1} \cdots p_{k-2} \cdot (1 - p_{k-1})  \\
%%     & = & p_j \cdot p_{j+1} \cdots p_{k-2} \cdot (1 - p_{k-1})
%%         \cdot \sum_{t=0}^\infty (p_1 \cdot p_2 \cdots p_M)^t \\
%%     & = & \frac{p_{j+1} \cdots p_{k-2} \cdot (1 - p_{k-1})}{1-p_1p_2 \cdots p_M},
%% \end{eqnarray*}
%% as required.
%%\end{proof}

%%%%%%%%%%%%%%%%%%%%%%%%%%%%%%%%%%%%%%%%%%%%%%%%%%%%%%%%%%%%%%%%%%%%%%%%%%%%%%%
\subsection{Calculating \texorpdfstring{$\mu$}{mu}}
%%%%%%%%%%%%%%%%%%%%%%%%%%%%%%%%%%%%%%%%%%%%%%%%%%%%%%%%%%%%%%%%%%%%%%%%%%%%%%%

We are now ready to prove the first item of Proposition~\ref{prop:periodic parameters}.
\begin{lem}\label{lem:periodic mu}{\em(Calculating $\mu$)}
    Let $(p_1,\dots,p_M) \in (0,1)^M$,
    and let $U_p$ be the corresponding step distribution in the environment $\om(p_1,\dots,p_M)$.
    Let $\mu = \smash{\displaystyle \lim_{x \to \infty}}\frac{\E[U_p(x)]}{x}$.
    Then $\mu=\frac{\ov{p}}{1-\ov{p}}$, where
    $\ov{p} = \frac{1}{M} \sum_{i=1}^M p_i$.
\end{lem}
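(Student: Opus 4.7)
\textbf{Plan for Lemma~\ref{lem:periodic mu}.} My approach is to bypass the explicit computation of the stationary distribution $\pi$ of the Markov chain $(R_i)$ from Claim~\ref{claim:U = sum gi} and instead use a Wald-type identity: to accumulate $x$ failures in a sequence of Bernoulli trials whose success probabilities average to $\ov{p}$ per period, the expected number of trials needed is $x/(1-\ov{p})$, so $\E[U_p(x)] = \E[\tau_x]-x \approx x\ov{p}/(1-\ov{p})$.

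First I would set up the relevant random variables: let $B_i \sim B(p_{i \bmod M})$ be the independent Bernoulli trials from Definition~\ref{def:U}, let $F_N = \sum_{i=1}^N (1-B_i)$ be the number of failures after $N$ trials, and let $\tau_x = \inf\{N \in \N : F_N = x\}$, so that by construction $U_p(x) = \tau_x - x$. A routine stochastic-domination argument using ellipticity (each trial is a failure with probability at least $\min_{i \in [M]}(1-p_i) > 0$) shows that $\tau_x$ is stochastically dominated by a sum of $x$ i.i.d.\ geometric random variables, so in particular $\E[\tau_x] < \infty$.

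Next I would introduce the martingale
\[
    M_N = F_N - \sum_{i=1}^{N}(1-p_{i \bmod M}),
\]
whose increments are bounded by $1$ in absolute value. Since $\tau_x \in L^1$ and the increments are bounded, the optional stopping theorem applies and yields $\E[M_{\tau_x}] = 0$, equivalently
\[
    x = \E[F_{\tau_x}] = \E\!\left[\sum_{i=1}^{\tau_x}(1-p_{i \bmod M})\right].
\]

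Finally, $M$-periodicity of the sequence $(p_{i \bmod M})_{i \ge 1}$ gives the elementary bound $\sum_{i=1}^N (1-p_{i \bmod M}) = N(1-\ov{p}) + \epsilon_N$ with $|\epsilon_N| \le M$ uniformly in $N$. Plugging this back into the previous identity yields $x = (1-\ov{p})\E[\tau_x] + O(1)$, hence $\E[\tau_x] = x/(1-\ov{p}) + O(1)$ and $\E[U_p(x)] = \E[\tau_x] - x = x\ov{p}/(1-\ov{p}) + O(1)$. Dividing by $x$ and letting $x\to\infty$ gives $\mu = \ov{p}/(1-\ov{p})$, as desired.

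The only delicate point is verifying the hypotheses of optional stopping, but this is standard given the bounded-increment property and $\E[\tau_x]<\infty$; the rest of the argument is short arithmetic. Notably, this route makes no use of the transition probabilities of $R_i$ computed in Claim~\ref{claim:U = sum gi}, which keeps the proof clean.
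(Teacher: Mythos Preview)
Your argument is correct. The optional stopping step is legitimate exactly for the reason you state: $(M_N)$ has increments bounded by $1$ and $\E[\tau_x]<\infty$ by comparison with a sum of $x$ i.i.d.\ geometrics with success probability $\min_i(1-p_i)>0$. From $\E[M_{\tau_x}]=0$ and the uniform bound $\bigl|\sum_{i=1}^{N}(1-p_{i\bmod M})-N(1-\ov{p})\bigr|\le M$ you immediately get $\E[\tau_x]=x/(1-\ov{p})+O(1)$, which is in fact a bit sharper than the bare limit statement.

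This route is genuinely different from the paper's. The paper writes $U_p(x)=\sum_{i=0}^{x-1}g_i$ via the Markov chain $(R_i)_{i\ge 0}$ on $[M]$ from Claim~\ref{claim:U = sum gi}, computes its stationary distribution $\pi$ explicitly (Claim~\ref{claim:calc pi}) together with the expectations $\E[U^{(j)}]$ (Claim~\ref{claim:calc E(Gj)}), and then shows $\mu=\ip{\pi,E}$ by ergodicity of $(R_i)$. Your Wald-type identity sidesteps all of that and is shorter and more self-contained for this lemma alone. The trade-off is that the paper's machinery is not wasted: the explicit formula for $\pi$ and the relation $\mu=\ip{\pi,E}$ are reused verbatim in the computation of $\rho$ in Lemma~\ref{lem:periodic rho} (both in Step~1, where the recursion among the $\rho^{(j)}$ is derived, and in Step~2, where stationarity of $\pi$ supplies the missing linear relation). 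So if you adopt your proof here, be aware that you will still need Claims~\ref{claim:calc pi} and the identity~\eqref{eq:mu = <pi,E>} later, and you should either retain them as separate statements or find an analogous shortcut for $\rho$.
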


\begin{proof}
    Recall Definition~\ref{def:Rjgj} on page~\pageref{def:Rjgj},
    where $U^{(j)}$, $R_j$ and $g_j$ are defined.
    Define an $M$-dimensional vector $E = (\E[U^{(1)}],\dots,\E[U^{(M)}]) \in \R^M$.
    We claim that
    \begin{equation}\label{eq:mu = <pi,E>}
        \mu = \ip{\pi,E} = \sum_{j=1}^M \pi_j \cdot \E[U^{(j)}].
    \end{equation}
    Indeed, by definition of $\mu$ we have
    \[
        \mu = \lim_{x \to \infty}\frac{\E[U_p(x)]}{x}
            = \lim_{x \to \infty}\frac{\sum_{i=1}^x \E[g_i]}{x}
            = \lim_{x \to \infty}\frac{\sum_{i=1}^x \E[\E[g_i|R_i]]}{x}.
    \]
    Now, since $(R_i)_{i \geq 0}$ is an irreducible and aperiodic Markov chain,
    it converges to a unique stationary distribution $\pi$, and therefore
    as $i$ grows to infinity the expectation $\E[\E[g_i|R_i]]$ converges
    to $\sum_{j=1}^M \pi_j \cdot \E[g_i | R_i = j]$,
    which is equal to $\sum_{j=1}^M \pi_j \cdot \E[U^{(j)}] = \ip{\pi, E}$.
    The following two claims provide the calculations of $\pi$ and $E$.
    \begin{claim}\label{claim:calc pi}
        The unique stationary distribution $\pi$ of the Markov chain $(R_i)_{i \geq 0}$
        is given by
        \[
            \pi_j=\frac{1-p_{j-1}}{\sum_{k=1}^{M}(1-p_k)},~ j=1,\dots,M,
        \]
        where we identify $p_0$ with $p_M$.
    \end{claim}
    \begin{claim}\label{claim:calc E(Gj)}
    For each $j \in [M]$ the expectation
    $\E[U^{(j)}]$ is equal to
    \[
        \E[U^{(j)}] = \frac{\sum_{k=j}^{j-1} p_j \cdots p_k}{1-p_1 \cdots p_M},
    \]
    where the product $p_j \cdots p_k$ is cyclic for $j > k$.
    \end{claim}
    The calculation of $\mu = \ip{\pi, E}$ is a straightforward application
    of the claims (the sum over $k$ in the formula for $\E[U^{(j)}]$
    cancels telescopically after multiplication with the terms
    $1-p_{j-1}$ in $\pi$ and summing over $j$). We omit the tedious details.
\end{proof}

%We return to the proof of Claim~\ref{claim:calc pi}.
\begin{proof}[Proof of Claim~\ref{claim:calc pi}]
We show that $(\pi P)_\ell=\pi_\ell$ for all $\ell \in [M]$,
where the matrix $P=(P_{j,k})_{j,k \in [M]}$ with
\[
    P_{j,k} = \frac{a_{j,k}(1-p_{k-1})}{1-p_1 p_2 \cdots p_M}
\]
is given by Claim~\ref{claim:U = sum gi}.
Computing $(\pi P)_\ell$ we have
\begin{align*}
    (\pi P)_\ell
    & = \sum_{j=1}^{M}\pi_j P_{j,\ell}\\
    & = \sum_{j=1}^{M} \frac{1-p_{j-1}}{\sum_{k=1}^{M}(1-p_k)}
            \cdot \frac{a_{j,\ell}(1-p_{\ell-1})}{1-p_1 p_2 \cdots p_M} \\
    & = \frac{1-p_{\ell-1}}{\sum_{k=1}^{M}(1-p_k)}
            \cdot
            \frac{\sum_{j=1}^{M} (1-p_{j-1}) \cdot a_{j,\ell}}
            {1-p_1 p_2 \cdots p_M},
\end{align*}
%where the product in the nominator of the second term should be read cyclically,
%as explained after Claim~\ref{claim:U = sum gi}.
Recalling the definition of $a_{j,\ell}$ we see that the sum in the
numerator of the second term cancels telescopically, leaving
%We get a telescopic sum in the nominator
%that is equal to
$1-p_1 p_2 \cdots p_M$.
Therefore $(\pi P)_\ell = \pi_\ell$ for all $\ell \in [M]$,
and the claim follows.
\end{proof}

%We now turn to the proof of Claim~\ref{claim:calc E(Gj)}.
\begin{proof}[Proof of Claim~\ref{claim:calc E(Gj)}]
By symmetry it is enough to calculate $\E[U^{(1)}]$.
For convenience write $a_j=p_1\cdots p_j$ for $j \in [M]$, $a_0=1$. %The calculation of $\E[U_p(x)]$
%is rather straightforward.
\begin{align*}
    \E[U^{(1)}]
    & = \sum_{\ell=0}^{\infty} \ell \cdot \Pr[U^{(1)}=\ell] \\
    & = \sum_{k=0}^{\infty} \sum_{j=0}^{M-1}
                        (kM+j) \cdot \Pr[U^{(1)}=kM+j] \\
    & = \sum_{k=0}^{\infty} \sum_{j=0}^{M-1}
                        (kM+j) \cdot (a_M)^{k} \cdot a_j \cdot (1-p_{j+1})\\
    & = M \cdot \left( \sum_{k=0}^{\infty} k \cdot (a_M)^{k}
                    \cdot \sum_{j=0}^{M-1} a_j \cdot (1-p_{j+1})
                    \right)\;+\\
    &\qquad\qquad     + \left( \sum_{k=0}^{\infty} (a_M)^{k}
                    \cdot \sum_{j=0}^{M-1} j \cdot a_j \cdot (1-p_{j+1}) \right) \\
    & = M \cdot  \left( \sum_{k=0}^{\infty}k(a_M)^{k} \right)
            \cdot \left( \sum_{j=0}^{M-1}a_j(1-p_{j+1}) \right)\;+\\
    &\qquad\qquad+ \left( \sum_{k=0}^{\infty}(a_M)^{k} \right)
                \cdot \left( \sum_{j=0}^{M-1}ja_j(1-p_{j+1}) \right)\\
    & = M \cdot \frac{a_M}{(1-a_M)^{2}} \cdot (1-a_M)
        +\frac{1}{1-a_M} \cdot \left( \sum_{j=1}^{M}a_j-Ma_M \right)\\
    & = \frac{1}{1-a_M}\sum_{j=1}^{M}a_j,
\end{align*}
as required.
\end{proof}

%%%%%%%%%%%%%%%%%%%%%%%%%%%%%%%%%%%%%%%%%%%%%%%%%%%%%%%%%%%%%%%%%%%%%%%%%%%%%%%
\subsection{Calculating \texorpdfstring{$\rho$}{rho}}\label{sec:calculating rho}
%%%%%%%%%%%%%%%%%%%%%%%%%%%%%%%%%%%%%%%%%%%%%%%%%%%%%%%%%%%%%%%%%%%%%%%%%%%%%%%

In this section we compute $\rho$ in the case $\ov{p}=\half$.
Recall that by Lemma~\ref{lem:periodic mu} this implies that $\mu=1$.

\begin{lem}\label{lem:periodic rho}
    Let $p = (p_1,\dots,p_M) \in (0,1)^M$ be a periodic
    environment with $\ov{p} = \half$.
    Let
    \[
        \rho
        = \frac{2}{M} \sum_{i=1}^M (1-p_{i}) \cdot \sum_{j=1}^{i} (2p_j - 1).
    \]
    Then $\lim_{x \to \infty} \rho(x) = \rho$.
    Furthermore for all $x \in \N_0$ we have $|\rho(x) - \rho| \le \exp(-C x)$
    for some constant $C$ that depends on $p$, but not on $x$.
\end{lem}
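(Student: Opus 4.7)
My plan is to express $\rho(x) = \E[U_p(x) - x]$ (recall $\mu = 1$ by Lemma~\ref{lem:periodic mu}) as the expectation of a martingale at a stopping time, and then use geometric convergence of the Markov chain $(R_x)$ from Definition~\ref{def:Rjgj} to extract both the explicit limit and the exponential rate.

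The first step is a clean identity. Let $T_x = \inf\{n : \sum_{i=1}^n (1 - B_i) = x\}$ be the index of the $x$-th failure, so that $U_p(x) = T_x - x$. Setting $S_n := \sum_{i=1}^n (2 B_i - 1)$, the relation $\sum_{i=1}^{T_x} B_i = T_x - x$ gives $S_{T_x} = 2(T_x - x) - T_x = U_p(x) - x$, hence $\rho(x) = \E[S_{T_x}]$.

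The second step is optional stopping. The process $S_n - \delta_n = 2 \sum_{i=1}^n (B_i - p_i)$ is a martingale with increments bounded by $2$, and $\E[T_x] < \infty$ follows from the concentration estimate of Proposition~\ref{prop:concentration of U}. Optional stopping thus gives $\E[S_{T_x}] = \E[\delta_{T_x}]$. At this point the hypothesis $\ov{p} = \half$ enters decisively: it forces $\delta_M = 0$, so the sequence $n \mapsto \delta_n$ descends to a function on $\Z/M\Z$. Since $R_x$ is by construction the cyclic position of the cookie immediately following the $x$-th failure, we have $T_x \equiv R_x - 1 \pmod M$ and therefore $\delta_{T_x} = \delta_{R_x - 1}$ (with $\delta_0 = 0$ directly from the definition).

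The final step combines two standard facts. First, $(R_x)$ is an irreducible aperiodic Markov chain on the finite state space $[M]$ (Claim~\ref{claim:U = sum gi}), so there are constants $C, c > 0$ depending only on $p$ with $|\Pr[R_x = j] - \pi_j| \leq C e^{-cx}$; applied to $\rho(x) = \sum_{j=1}^M \Pr[R_x = j] \, \delta_{j-1}$ this gives the exponential bound for $|\rho(x) - \rho|$, where $\rho := \sum_j \pi_j \delta_{j-1}$. Second, Claim~\ref{claim:calc pi} combined with $\sum_k(1-p_k) = M/2$ (from $\ov{p} = \half$) gives $\pi_j = \tfrac{2(1 - p_{j-1})}{M}$. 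Reindexing via $i = j - 1$ and using $\delta_0 = \delta_M = 0$ to shift the summation range back to $\{1, \dots, M\}$ then matches the claimed formula exactly.

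The only delicate points I anticipate are (i) a careful justification of optional stopping (bounded increments together with $\E[T_x] < \infty$ should suffice) and (ii) the cyclic bookkeeping in $\delta_{T_x} = \delta_{R_x - 1}$, which is exactly where the assumption $\ov{p} = \half$ is used. Everything else is routine.
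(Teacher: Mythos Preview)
Your argument is correct and genuinely different from the paper's. The paper also reduces the exponential rate to the geometric ergodicity of $(R_i)$, but it does so by writing $\rho(x)=\sum_j \E[U^{(j)}]\sum_{i=0}^{x-1}(\Pr[R_i=j]-\pi_j)$, i.e.\ it sums the deviations from stationarity step by step rather than evaluating a single expectation at time $x$. For the explicit value of $\rho$, the paper then sets up and solves an $M\times M$ linear system: conditioning on the first cookie yields $M-1$ relations $\rho^{(j+1)}=\rho^{(j)}+1-2p_j$ among the shifted drifts, and stationarity gives the closing relation $\sum_j\pi_j\rho^{(j)}=0$.

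Your martingale/optional-stopping route bypasses this entirely. The identity $U_p(x)-x=S_{T_x}$ followed by $\E[S_{T_x}]=\E[\delta_{T_x}]=\E[\delta_{R_x-1}]$ collapses the problem to a single expectation against the law of $R_x$, from which both the exponential bound and the closed form $\rho=\sum_j\pi_j\delta_{j-1}$ drop out immediately. This is cleaner and makes the role of the hypothesis $\ov{p}=\tfrac12$ completely transparent (it is exactly what forces $\delta_n$ to be $M$-periodic). The paper's linear-system computation, by contrast, generalizes more readily to related quantities such as higher moments, but for $\rho$ itself your approach is more economical.
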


\begin{proof}
    We first prove that the limit $\lim_{x \to \infty} \rho(x)$ exists.
     Using the notations $U^{(j)}$, $R_j$ and $g_j$ (see Definition~\ref{def:Rjgj}) we have
    \begin{align*}
        \rho(x)
            & =   \left( \sum_{i=0}^{x-1} \E[g_i] \right) - \mu x \\
            & =   \sum_{i=0}^{x-1} \left( \E[\E[U^{(R_i)}|R_i]]  - \mu \right) \\
& \stackrel{\textrm{\clap{\eqref{eq:mu = <pi,E>}}}}{=}\;
                  \sum_{i=0}^{x-1} \sum_{j=1}^M \Pr[R_i=j] \cdot \E[U^{(j)}] - \pi_j \cdot \E[U^{(j)}] \\
            & =   \sum_{i=0}^{x-1} \sum_{j=1}^M (\Pr[R_i=j] - \pi_j) \cdot \E[U^{(j)}] \\
            & =   \sum_{j=1}^M \E[U^{(j)}] \cdot \sum_{i=1}^x (\Pr[R_i=j] - \pi_j).
    \end{align*}
    Since $(R_i)_{i \geq 0}$ is irreducible and aperiodic, it converges
    exponentially fast to the stationary distribution, that is,
    there is some $c \in \R$ and some $\a \in (0,1)$ such that
    $|\Pr[R_i=j] - \pi_j| \leq c \cdot \a^i$ for all $i \in\N$
    and for all $j \in [M]$ (see, e.g., Theorem~4.9 in~\cite{levin2009markov}).
    It now follows that $\rho(x)$ converges, and if we denote
    its limit by $\rho$, then $|\rho(x) - \rho| \le \exp(-C x)$
    for some constant $C$ that does not depend on $x$.

    Next, we turn to computing the limit $\rho$ explicitly. For
    every $j=1,\dotsc,M$, define $\rho^{(j)}$ to be the value of
    $\rho$ which corresponds to the environment $s^j(p) =(p_j,p_{j+1},\dots,p_{j-1})$.
    We are interested in $\rho^{(1)}$, and our approach will be to find
    $M$ independent linear relations between the variables
    $\rho^{(j)}$. We will also need the notations $\rho^{(j)}(x)$ and $U^{(j)}(x)$
    which are $\rho(x)$ and $U(x)$ with respect to the environment $s^j(p)$.

    {\bf Step 1.} We first extract $M-1$ relations between the
    $\rho^{(j)}$ as follows. Since $U^{(j)}$ counts successes, examine
    the very first cookie and divide according to whether is was a
    success or failure. We get the following equality
    \[
        \E[U^{(j)}(x)]=p_j \cdot ( 1+\E[U^{(j+1)}(x)] ) + (1-p_j) \cdot \E[U^{(j+1)}(x-1)].
    \]
    Subtracting $\mu x$ from both sides of the equality we get
    \begin{align*}
        \rho^{(j)}(x)
        & =  p_j \cdot ( 1+\E[U^{(j+1)}(x)] - \mu x) + (1-p_j) \cdot ( \E[U^{(j+1)}(x-1)] - \mu x) \\
        & =   p_j \cdot ( 1+\rho^{(j+1)}(x) ) + (1-p_j) \cdot (\rho^{(j+1)}(x-1) - \mu).
    \end{align*}
    Taking $x \to \infty$ we get
    \begin{equation}\label{eq:periodic rho_j formula tmp}
        \rho^{(j)} = \rho^{(j+1)} + p_j - (1-p_j) \cdot \mu.
    \end{equation}
    Recall that we assume that $\ov{p}=\half$. Therefore, by
    Lemma~\ref{lem:periodic mu} if follows that $\mu=1$.
    Hence, \eqref{eq:periodic rho_j formula tmp} gives us the constraints
    $\rho^{(j+1)} = \rho^{(j)} + 1 - 2p_j$. Summing from 1 to $j-1$ we obtain
    \begin{equation}\label{eq:periodic rho_j formula}
        \rho^{(j)} = \rho^{(1)} + \sum_{k=1}^{j-1}(1 - 2 p_k) \qquad \mbox{for all $j \in [M]$.}
    \end{equation}
    These are our first $M-1$ relations.

    {\bf Step 2.} The remaining relation will be extracted from the
    stationarity of $\pi$. If we start with $j \in [M]$ distributed according to $\pi$,
    and then wait until the first failure we get again $j$ distributed
    like $\pi$. This means that we can write
    \begin{equation}\label{eq:piandU}
    \sum_{j=1}^M\pi_j\E[U^{(j)}(x)]=x\sum_{j=1}^M\pi_j\E[U^{(j)}(1)].
    \end{equation}
    For $\rho(x)$ this gives
    \[
        \sum_{j=1}^M \pi_j \cdot \rho^{(j)}(x)
            = \sum_{j=1}^M \pi_j \cdot (\E[U^{(j)}(x)] - \mu x)
            \stackrel{\textrm{\eqref{eq:piandU}}}{=}
            x\sum_{j=1}^M \pi_j(\E[U^{(j)}(1)]-\mu)
            \stackrel{\textrm{\eqref{eq:mu = <pi,E>}}}{=} 0.
    \]
    Passing to the limit as $x$ goes to infinity we get
    \[
        \sum_{j=1}^M \pi_j \cdot \rho^{(j)} = 0.
    \]
    By Claim~\ref{claim:calc pi} we have $\pi_j =
    (1-p_{j-1})\big/\sum_{k=1}^M(1-p_k)$. Plugging this in the equation above,
    and simplifying it we get
    \[
        \sum_{j=1}^M (1-p_{j-1}) \cdot \rho^{(j)} = 0.
    \]
    Substituting $\rho_j$ with its values in~\eqref{eq:periodic rho_j formula} we get
    \[
        \sum_{j=1}^M (1-p_{j-1}) \cdot \left( \rho^{(1)} - \sum_{k=1}^{j-1}(2 p_k - 1) \right) = 0.
    \]
    Isolating the variable $\rho=\rho^{(1)}$ we finally obtain the desired formula.
    \[
        \rho \cdot {\sum_{j=1}^M (1-p_j)}
        = \sum_{j=1}^M \left[(1-p_{j-1}) \cdot \sum_{k=1}^{j-1} (2p_k - 1)\right].
    \]
    By the assumption $\ov{p}=\half$ we have $\sum_{j=1}^M (1-p_{j}) = \frac{M}{2}$ and $\sum_{j=1}^M (2p_{j}-1) = 0$. Hence,
    \[
        \rho
        = \frac{2}{M} \sum_{j=1}^M \left[(1-p_{j}) \cdot \sum_{k=1}^{j} (2p_k - 1)\right].
    \]
    This completes the proof of Lemma~\ref{lem:periodic rho}.
\end{proof}

We finally prove Proposition~\ref{prop:periodic parameters}.

\begin{proof}[Proof of Proposition~\ref{prop:periodic parameters}]
Lemma~\ref{lem:periodic mu} proves Item~(\ref{item:periodic mu}) of the proposition.
Item~(\ref{item:periodic rho}) is proven in Lemma~\ref{lem:periodic rho}.
In order to prove Item~(\ref{item:periodic nu})
note first that
$|\ov{p}_n-\frac{1}{2}|\le \frac{M}{n}$ and
$A_n=\frac{1}{n}\sum_{i=1}^n p_i(1-p_i) \to \frac{1}{M}\sum_{i=1}^M p_i(1-p_i)=:A$ as $n\to\infty$.
Moreover, $A>0$ as $p_i\in(0,1)$ and $|A_n-A|<\frac{M/4}{n}$.
Therefore, Item~(\ref{item:periodic nu}) is a direct application of Lemma~\ref{lem:VarCalculation}.
Item~(\ref{item:periodic theta}) now follows from Items~(\ref{item:periodic rho})
and~(\ref{item:periodic nu}) using the fact that $A_n \to \frac{1}{M}\sum_{i=1}^M p_i(1-p_i)=\nu/8>0$
and the triangle inequality.
\end{proof}

%%%%%%%%%%%%%%%%%%%%%%%%%%%%%%%%%%%%%%%%%%%%%%%%%%%%%%%%%%%%%%%%%%%%%%%%%%%%%%%
\subsection{A concrete example of a periodic environment}
%%%%%%%%%%%%%%%%%%%%%%%%%%%%%%%%%%%%%%%%%%%%%%%%%%%%%%%%%%%%%%%%%%%%%%%%%%%%%%%

In this section we provide a concrete example of a periodic environment.
Let $M \in \N$ be an even integer, and let $p \in (0,1)$ be a parameter.
Define a periodic environment $\om(p,M)$ with first $M/2$
cookies being $p$, and the last $M/2$ cookies being $1-p$.
The average of the cookies in a period is equal to $\half$ and hence $\mu=1$.
By Lemma~\ref{lem:periodic rho} we have
\[
    \rho
    = \frac{2}{M} \sum_{i=1}^M (1-p_{i}) \cdot \sum_{j=1}^{i} (2p_j - 1).
\]
A tedious calculation gives
\[
    \rho = (2p-1) \frac{M}{4} - \frac{(2p-1)^2}{2}.
\]
%----------------------------------------------
% Letting $q=1-p$ we have
%\begin{eqnarray*}
%    \rho
%    & = & \frac{2}{M} \sum_{j=2}^M (1-p_{j-1}) \cdot \sum_{k=1}^{j-1} (2p_k - 1) \\
%    & = & \frac{2}{M} \left( \sum_{j=2}^{M/2+1} (1-p) \cdot (j-1) \cdot (2p-1)
%                    + \sum_{j=M/2+2}^M (1-q) \left(M/2 \cdot (2p-1) + (j-\frac{M}{2}-1)(2q-1) \right) \right) \\
%    & = & \frac{2}{M} \left( (1-p) (2p-1) \frac{M/2 \cdot (M/2+1)}{2}
%                    + p \cdot (2p-1) \cdot (M/2-1)\cdot M/2 + p(1-2p)\frac{(M/2-1) \cdot M/2}{2} \right) \\
%    & = & \frac{2}{M} \left( (1-p) (2p-1) \frac{M/2 \cdot (M/2+1)}{2}
%                    + p(2p-1)\frac{(M/2-1) \cdot M/2}{2} \right) \\
%    & = & (1-p) (2p-1) \frac{M/2+1}{2}
%                    + p(2p-1)\frac{(M/2-1)}{2} \\
%    & = & (2p-1) \frac{M}{4} -(2p-1)^2/2 \\
%\end{eqnarray*}
%----------------------------------
By Lemma~\ref{lem:VarCalculation} we have
\[
    \nu = \frac{8}{M}\sum_{i=1}^M p_i (1-p_i) = 8p(1-p),
\]
and hence,
\[
    \theta = \frac{2\rho}{\nu} = \frac{(\frac{M}{2} - (2p-1)) \cdot (2p-1)}{8 p (1-p)}.
\]

Therefore, by Theorem~\ref{thm:periodic env} we have the following corollary.

\begin{corollary}
Let $p \in (\half,1)$, and let $M$ be an even positive integer.
Define a periodic environment $\om(p,M)$ with first $M/2$
cookies having probabilities $p$, and the last $M/2$ cookies having probabilities $1-p$.
Then, ERW in $\om(p,M)$ is right transient if and only if
$M > \frac{8p-8p^2+2}{2p-1}$, and is recurrent otherwise.

In particular for $M=2$ ERW in the periodic environment $\om(p,1-p)$
is a.s. recurrent for all $p \in (0,1)$.
\end{corollary}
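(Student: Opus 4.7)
The plan is to apply Theorem~\ref{thm:periodic env} directly, using the explicit values of $\rho$, $\nu$, and $\theta$ already computed in the paragraph preceding the corollary. Because the environment $\om(p,M)$ places $M/2$ cookies of value $p$ followed by $M/2$ cookies of value $1-p$, the average $\ov{p}$ is exactly $\half$, so we are automatically in case (3) of Theorem~\ref{thm:periodic env}. The recurrence/transience dichotomy is then controlled by the two numbers $\theta(p_1,\dots,p_M)$ and $\theta(1-p_1,\dots,1-p_M)$, and essentially all that remains is bookkeeping.

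For $p \in (\half,1)$ the first step is to unwind the right-transience condition $\theta(p_1,\dots,p_M)>1$. Starting from
\[
\theta = \frac{(\tfrac{M}{2}-(2p-1))(2p-1)}{8p(1-p)}
\]
and using that $2p-1>0$, I would clear denominators and rearrange to obtain $M > \frac{8p-8p^2+2}{2p-1}$, which matches the condition in the statement after a short expansion. The second step is to show that left-transience never occurs in this regime: the $p\mapsto 1-p$ substitution in the same formula gives $\theta(1-p_1,\dots,1-p_M) = \frac{(\tfrac{M}{2}+(2p-1))(1-2p)}{8p(1-p)}$, which is non-positive for $p\ge\half$ and hence certainly $\le 1$. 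Combining these two observations, Theorem~\ref{thm:periodic env} forces the walk to be right transient exactly when $M$ exceeds the stated threshold and recurrent otherwise.

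For the $M=2$ particular case with general $p\in(0,1)$, I would first invoke the built-in $p\leftrightarrow 1-p$ symmetry of the model (which simply swaps right and left transience, and reduces $p=\half$ to simple random walk) to restrict to $p\in[\half,1)$. Plugging $M=2$ into the inequality $M > \frac{8p-8p^2+2}{2p-1}$ and clearing gives the equivalent condition $(2p+1)(p-1)>0$, which fails throughout $(\half,1)$, so by the first part the walk is recurrent for every such $p$, and hence for every $p\in(0,1)$ after undoing the symmetry. I do not anticipate any real obstacle; the only place to be careful is tracking the sign of $2p-1$ when passing between $\theta$-inequalities and inequalities in $M$, and noting that the reversed environment is again of the same symmetric form so the already-derived $\theta$-formula applies to it verbatim.
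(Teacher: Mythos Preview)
Your proposal is correct and follows exactly the route the paper takes: apply Theorem~\ref{thm:periodic env} with the explicit values of $\theta$ computed just before the corollary. In fact you supply more detail than the paper, which simply writes ``Therefore, by Theorem~\ref{thm:periodic env} we have the following corollary'' and leaves both the algebra for the threshold and the verification that $\theta(1-p_1,\dots,1-p_M)\le 1$ implicit; your checks of these points, and of the $M=2$ case via the reflection symmetry, are all fine.
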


%%%%%%%%%%%%%%%%%%%%%%%%%%%%%%%%%%%%%%%%%%%%%%%%%%%%%%%%%%%%%%%%%%%%%%%%%%%%%%%
\section{More applications: reproving known results}\label{sec:known results}
%%%%%%%%%%%%%%%%%%%%%%%%%%%%%%%%%%%%%%%%%%%%%%%%%%%%%%%%%%%%%%%%%%%%%%%%%%%%%%%

In this section we show how to use Theorem~\ref{thm:Fundamental} in order
to reprove transience criterion for several known cases of ERW in identically
piled environments. We shall assume that the discussed environments
$p$ are always non-degenerate. %
%\footnote{\label{fn:delta=infty}
In the case that the environment is degenerate, then we must have
that either $p_i\to 0$ or $p_i\to 1$, which clearly imply transience.
For example, assume that $p_i \to 1$. Then, transience can be proven
by coupling the Kesten-Kozlov-Spitzer process $Z^+$ with the
corresponding process in bounded environment $p'$ defined by $p'_i:=p_i$
for all $i\leq M$ and $p'_i = \half$ for all $i>M$ for $M$ sufficiently
large to make sure that $p_i>\half$ for all $i>M$ and also
$\sum_{i=1}^M(2p_i-1)>1$ (we will explain this coupling in detail
below, in the proof of Claim \ref{claim:positive env rho(x) to delta}).
Transience in such environment follows from Theorem~\ref{thm:bddEnv}.%}

%%%%%%%%%%%%%%%%%%%%%%%%%%%%%%%%%%%%%%%%%%%%%%%%%%%%%%%%%%%%%%%%%%%%%%%%%%%%%%%
\subsection{ERW in bounded environments}\label{subsec:bounded env}
%%%%%%%%%%%%%%%%%%%%%%%%%%%%%%%%%%%%%%%%%%%%%%%%%%%%%%%%%%%%%%%%%%%%%%%%%%%%%%%

In this section we reprove the following theorem of Kosygina and Zerner
from~\cite{kosygina2008positively}
(the original proof applies in the more general setting of random environments).
\begin{thm}[Kosygina-Zerner~\cite{kosygina2008positively}]\label{thm:bddEnv}
    Let $p = (p_i)_{i \in \N}$ be an elliptic bounded cookie environment.
    That is, $p_i \in (0,1)$ for all $i \in\N$, and there is some $M \in \N$
    such that $p_i=\half$ for all $i>M$.
    Let
    \[
        \delta=\sum_{i=1}^M(2p_i-1).
    \]
    Let $X = (X_n)_{n \geq 0}$ be a ERW in $\om(p)$. Then
    \begin{enumerate}
    \item
        If $\delta>1$ then $X_n \to +\infty$ a.s.
    \item
        If $\delta<-1$ then $X_n \to -\infty$ a.s.
    \item
        If $-1 \leq \delta \leq 1$, then $X_n=0$ i.o.\ a.s.
    \end{enumerate}
\end{thm}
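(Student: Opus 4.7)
The plan is to apply Theorem~\ref{thm:Fundamental} to the Markov chains $Z^+$ and $Z^-$ associated with $\om(p)$ via Definition~\ref{def:U}, and then conclude via Theorem~\ref{thm:Z char transience} (together with its symmetric counterpart for $Z^-$). The setup is immediate: since $p_i=\half$ for all $i>M$, one has $|\ov{p}_n-\half|=|\delta|/(2n)$, so the hypotheses of Proposition~\ref{prop:concentration of U} hold with $\ov{p}=\half$, yielding both the required concentration bound for $U_p(x)$ and $\mu=\ov{p}/(1-\ov{p})=1$; the same applies to $U_q$ with $q_i=1-p_i$.

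The next step is to compute $\rho$ and $\nu$ for $U_p$ explicitly. For $\rho$ I would use the following decomposition, valid once $x\ge M$: write $U_p(x)=S+V$, where $S$ (resp.\ $F:=M-S$) is the number of successes (resp.\ failures) among the first $M$ Bernoulli trials, and $V$, conditionally on $F$, is the number of successes in a sequence of fair Bernoulli trials until $x-F$ failures. Since $\E[V\mid F]=x-F$, linearity gives $\E[U_p(x)]=x+\E[S-F]=x+\sum_{i=1}^M(2p_i-1)=x+\delta$, so $\rho(x)=\delta$ exactly for $x\ge M$. The same decomposition yields $\Var[U_p(x)]=2(x-\E[F])+4\Var[F]=2x+O(1)$, hence $\nu(x)=2+O(1/x)$; alternatively this is the direct application of Lemma~\ref{lem:VarCalculation}, since $A_n=\frac1n\sum_{i=1}^n p_i(1-p_i)\to \frac14$ at rate $O(1/n)$, giving $\nu=8A=2$.

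Combining these, $\theta(x)=2\rho(x)/\nu(x)=\delta+O(1/x)$, a rate far faster than $1/\ln x$. Thus the criterion of Theorem~\ref{thm:Fundamental} (with any $\alpha(x)\to\infty$ satisfying $\alpha(x)x^{-1/2}\ll 1/\ln x$, for instance $\alpha(x)=\sqrt{x}/(2\ln x)$) reduces purely to the sign of $\delta-1$. The symmetric analysis for $U_q$ produces $\rho=-\delta$ and the same $\nu=2$, hence the corresponding $\theta=-\delta$. Theorem~\ref{thm:Z char transience} then packages the three cases: if $\delta>1$ then $Z^+$ survives with positive probability and $X_n\to+\infty$ a.s.; if $\delta<-1$ then $Z^-$ survives and $X_n\to-\infty$ a.s.; and if $-1\le\delta\le 1$ both $Z^+$ and $Z^-$ hit $0$ almost surely, giving recurrence.

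The only mildly delicate point is the boundary $|\delta|=1$, where $\theta=1$ exactly and one must verify that the error $\theta(x)-\delta=O(1/x)$ really fits inside the $\frac{1}{\ln x}$ window that separates the recurrence and transience halves of Theorem~\ref{thm:Fundamental}; this is precisely why the rate of convergence of $\nu(x)$ was tracked above, and with the stated $\alpha(x)$ the verification is routine.
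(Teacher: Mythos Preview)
Your proposal is correct and follows essentially the same approach as the paper: both compute $\rho(x)=\delta$ and $\nu(x)=2+O(1/x)$ directly from the bounded-environment structure, then feed $\theta(x)=\delta+O(1/x)$ into Theorem~\ref{thm:Fundamental} and conclude via Theorem~\ref{thm:Z char transience}. The only cosmetic difference is that for the second moment the paper uses the independent decomposition $U_p(x)=U_p(M)+NB(x-M,\tfrac12)$ (splitting at the $M^{\textrm{th}}$ \emph{failure}) rather than your split at the $M^{\textrm{th}}$ \emph{trial}, which forces you through a conditional-variance computation; both routes give the same $2x+O(1)$.
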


\begin{proof}
    Consider the step distribution $U_p$ of the Markov chain $Z^+$
    defined by the environment $p$. We start the proof by computing the
    expectation $\E[U_p(x)]$ for all $x > M$. Let $L$ be
    the number of failures in the first $M$ Bernoulli trials. Then
    \begin{equation}\label{eq:fin env E[U(x)] using L}
        \E[U_p(x)] = M - \E[L] + \E\left[ \E\Big[NB\Big(x-L,\half\Big)\Big|L\Big] \right],
    \end{equation}
    where $NB(x-L,\half)$ is the negative binomial distribution.
    Indeed, the last term is $\E \left[ \E[NB(x-L,\half)|L] \right]$
    due to the assumption that there are at most $M$
    biased cookies. Thus, after $M$ Bernoulli trials the rest are just
    are $B(p_i = \half)$ for all $i > M$, and we count the number of successes
    in unbiased Bernoulli trials until reaching additional $x-L$ failures.

    By definition $\E[L]$ is equal to
    \[
        \E[L] = \sum_{i=1}^M (1-p_i) = M - \sum_{i=1}^M p_i = \frac{M}{2} - \frac{\delta}{2}.
    \]
    The last term in~\eqref{eq:fin env E[U(x)] using L} is equal to
    \[
        \E\left[ \E\Big[NB\Big(x-L,\half\Big)\Big|L\Big] \right] = \E[x - L] = x - \E[L].
    \]
    Therefore, for $x > M$ we have
    \begin{equation}\label{eq:fin env E[U(x)]}
        \E[U_p(x)] = x + M - 2 \E[L] = x + \delta.
    \end{equation}
    That is, in the setting of Theorem~\ref{thm:Fundamental} the parameters $\mu$ and $\rho(x)$
    for $U_p(x)$ are
    \begin{equation}\label{eq:fin_env mu}
        \mu = 1.
    \end{equation}
    \begin{equation}\label{eq:fin_env rho}
        \rho(x) = \delta \qquad \mbox{ for all } x > M.
    \end{equation}
    In order to compute $\nu(x)$ we assume again that $x > M$ and compute
    $\E[(U_p(x) - x)^2]$.
    Note that for $x > M$ we can write $U_p(x) = U_p(M) + NB(x-M,\half)$,
    where the two summands are independent.
    Therefore, if we let $c = \E [ (U_p(M) - M)^2 ] < \infty$, then
    \begin{align*}
        \E[(U_p(x) - x)^2]
            & = \E \left[ \left( (U_p(M) - M) + \Big(NB\Big(x-M,\half\Big) - (x-M)\Big) \right)^2 \right] \\
            & = \E [ (U_p(M) - M)^2 ] + \E \left[ \Big(NB\Big(x-M,\half\Big) - (x-M)\Big)^2 \right] \\
            & = c + 2(x - M),
    \end{align*}
    where the second equality is by independence of $U_p(M)$ and $NB(x-M,\half)$.
    This gives us that
    \begin{equation}\label{eq:fin_env nu}
        \nu(x) = \frac{\E[(U_p(x) - x)^2]}{x} = 2 + \frac{c - 2M}{x} = 2 + O\Big(\frac{1}{x}\Big).
    \end{equation}
    Using~\eqref{eq:fin_env rho} and~\eqref{eq:fin_env nu} we get that
    for all $x > M$ it holds that
    \begin{equation}\label{eq:fin_env theta}
        \theta(x) = \frac{2\rho(x)}{\nu(x)} = \delta + O\Big(\frac{1}{x}\Big).
    \end{equation}

    Next we apply Theorem~\ref{thm:Fundamental} on $Z^+$. Recall that by
    Proposition~\ref{prop:concentration of U} the step distributions $U_p(x)$
    is concentrated, as required in the conditions of
    Theorem~\ref{thm:Fundamental}. By applying Theorem~\ref{thm:Fundamental}
    we conclude that the Markov chain $Z^+$ that corresponds to ERW in
    $\om(p)$ is transient if and only
    if $\delta > 1$. Therefore, by Theorem~\ref{thm:Z char transience} ERW in
    $\om(p)$ is right transient a.s. if and only if $\delta > 1$.

    Analogously the Markov chain $Z^-$ that corresponds to ERW in $\om(p)$ is
    transient if and only if $\delta < -1$. and hence ERW in $\om(p)$ is left
    transient a.s. if and only if $\delta < -1$.

    Lastly, if $\delta \in [-1,1]$, then both $Z^+$ and $Z^-$ are a.s. recurrent,
    and thus ERW in $\om(p)$ visits the origin i.o. a.s.
\end{proof}

%%%%%%%%%%%%%%%%%%%%%%%%%%%%%%%%%%%%%%%%%%%%%%%%%%%%%%%%%%%%%%%%%%%%%%%%%%%%%%%
\subsection{ERW in positive environments}\label{subsec:positive env}
%%%%%%%%%%%%%%%%%%%%%%%%%%%%%%%%%%%%%%%%%%%%%%%%%%%%%%%%%%%%%%%%%%%%%%%%%%%%%%%

In this section we assume that our cookie environments $p$ are positive,
that is $p_i \geq \half$ for all $i \in\N$,
and reprove the following theorem of Zerner~\cite{zerner2005multi}
(the original proof applies in the more general setting of random environments).

\begin{thm}[Zerner \cite{zerner2005multi}]\label{thm:positive env}
    Let $p = (p_i)_{i \in \N}$ be an elliptic and positive cookie environment,
    and let
    \[
        \delta=\sum_{i=1}^{\infty}(2p_i-1).
    \]
    Let $X = (X_n)_{n \geq 0}$ be a ERW in $\om(p)$.
    Then
    \begin{enumerate}
    \item
        If $\delta>1$ then $X_n \to +\infty$ a.s.
    \item
        Otherwise $X_n=0$ i.o. a.s.
    \end{enumerate}
\end{thm}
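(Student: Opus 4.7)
The plan is to adapt the proof of the bounded-environment case (Theorem~\ref{thm:bddEnv}) by applying Theorem~\ref{thm:Fundamental} to the Markov chain $Z^+$ associated to the ERW in $\om(p)$, having first excluded left-transience. By Theorem~\ref{thm:Z char transience} we need to understand when $Z^+$ and $Z^-$ are transient. Since $p$ is positive, $q_i=1-p_i\le\half$, so $\overline{q}_n\le\half$; if $\overline{q}<\half$ then $\mu^-<1$ and Theorem~\ref{thm:Fundamental} shows $Z^-$ hits $0$ a.s., while if $\overline{q}=\half$ the parameter calculation below gives $\theta^-=-\delta\le 0<1$, again forcing $Z^-$ to hit $0$. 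Hence $X$ cannot be left-transient, and the problem reduces to determining when $Z^+$ is transient.

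Next I compute the parameters of $U_p$ under $\delta<\infty$, writing $p_i=\half+\eps_i$ with $\eps_i\ge 0$ and $\sum_i\eps_i=\delta/2$. Summability gives $|\overline{p}_n-\half|=O(1/n)$, hence Proposition~\ref{prop:concentration of U} yields $\mu=1$ plus the required concentration. For $\rho(x)$, let $T:=U_p(x)+x$ be the stopping time at which $x$ failures first occur; optional stopping applied to the martingale $\sum_{i\le k}(1-B_i-q_i)$ gives $\E[T]=2x+2\E[S_T]$ with $S_T=\sum_{i\le T}\eps_i$, and dominated convergence (using $S_T\le\delta/2$ and $T\to\infty$ in probability) yields $\rho(x)=2\E[S_T]\to\delta$. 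For $\nu(x)$, Lemma~\ref{lem:VarCalculation} applies: $|\tfrac14-p_i(1-p_i)|=\eps_i^2\le\eps_i/2$ is summable, so $A_n\to\tfrac14$ at rate $O(1/n)$, giving $\nu(x)\to 2$ and $\theta(x)\to\delta$.

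The non-critical subcases now follow. If $\delta>1$ and $\delta<\infty$, Theorem~\ref{thm:Fundamental} (with $\alpha(x)=\log x$, say) gives $Z^+$ transient, hence $X_n\to+\infty$ a.s.\ via Theorem~\ref{thm:Z char transience}. If $\delta=+\infty$ the same conclusion follows by truncation and stochastic domination: pick $M$ so that the bounded environment $p^{(M)}$ agreeing with $p$ on the first $M$ cookies and $\half$ thereafter has $\delta^{(M)}>1$; Theorem~\ref{thm:bddEnv} gives right-transience of $p^{(M)}$-ERW, and $p\ge p^{(M)}$ pointwise transfers this to $p$ by the standard coupling. If $\delta<1$, $\theta(x)\to\delta$ stays bounded away from $1$, so Theorem~\ref{thm:Fundamental} applies and gives that $Z^+$ hits $0$ a.s., which combined with non-left-transience yields $X_n=0$ infinitely often a.s.

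The main obstacle is the critical case $\delta=1$, where $\theta(x)\to 1$ at an a priori uncontrolled rate: under just $\sum\eps_i<\infty$, the tail $\sum_{i>n}\eps_i$ can decay arbitrarily slowly, so Theorem~\ref{thm:Fundamental} is not directly applicable. To circumvent this I would aim for the sharper pointwise estimate $\theta(x)\le 1$ for all large $x$, which is enough to invoke Theorem~\ref{thm:Fundamental} with, e.g., $\alpha(x)=\sqrt{x}/(2\log x)$. Applying optional stopping to the quadratic martingale $(F_k-Q_k)^2-\sum_{i\le k}p_iq_i$ at $T$ yields an exact identity expressing $\nu(x)$ in terms of $\rho(x)$, $\E[S_T^2]$, $\E[(T-2x)S_T]$, and $\E[\sum_{i\le T}\eps_i^2]$; the key input is the inequality $\sum_i\eps_i^2\le\delta/4$, valid because $\eps_i\in[0,\half]$ implies $\eps_i^2\le\eps_i/2$. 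Verifying that the cross term $\E[(T-2x)S_T]$ can be controlled sharply enough to conclude $\nu(x)\ge 2\rho(x)$ in the general (non-bounded) positive setting is the main technical step.
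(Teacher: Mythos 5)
Most of your proposal runs parallel to the paper's proof: reduce to the chains $Z^\pm$, establish $\mu=1$ and the concentration of $U_p$ via Proposition~\ref{prop:concentration of U}, and get $\nu(x)=2+O(\log^4(x)/\sqrt{x})$ from Lemma~\ref{lem:VarCalculation}. Your optional-stopping derivation $\rho(x)=2\E[S_{T_x}]$ (with $S_{T_x}=\sum_{i\le T_x}\eps_i$) is in fact a cleaner route to $\rho(x)\to\delta$ than the paper's truncation-and-coupling argument. The non-critical subcases and $\delta=\infty$ are handled correctly.

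The genuine gap is in the critical case $\delta=1$, where you stop short and propose an unverified detour. Your worry is that $\theta(x)\to 1$ at an uncontrolled rate; you then aim to prove the exact inequality $\nu(x)\ge 2\rho(x)$ via a quadratic martingale, and concede that controlling the cross term is open. But this inequality is not needed, and is not obviously true even in the critical case. The fix is already contained in the identity you wrote down: since $\eps_i\ge 0$ and $\sum_i\eps_i=\delta/2$, you have $S_{T_x}\le\delta/2$ \emph{pointwise}, hence
\[
\rho(x)=2\E[S_{T_x}]\le\delta\qquad\text{for every }x
\]
(equivalently, $T_x$ and hence $S_{T_x}$ is monotone increasing in $x$, so $\rho(x)\nearrow\delta$). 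No rate of convergence for $\rho$ is required. Combining this one-sided bound with the two-sided rate for $\nu$ gives
\[
\theta(x)=\frac{2\rho(x)}{\nu(x)}\le\frac{2\delta}{\nu(x)}=\delta+O\!\left(\frac{\log^4 x}{\sqrt{x}}\right),
\]
so for $\delta\le 1$ one has $\theta(x)<1+\frac{1}{\ln x}-\alpha(x)x^{-1/2}$ for all large $x$ with, say, $\alpha(x)=\ln x$, which is exactly the hypothesis of the recurrence branch of Theorem~\ref{thm:Fundamental}. This is precisely what the paper does (Claim~\ref{claim:positive env rho(x) < delta} plus Corollary~\ref{cor:positive env}), except the paper derives $\rho(x)\le\delta$ from the monotonicity of $x\mapsto\rho(x)$; your optional-stopping identity yields it even more directly. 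The slow decay of $\sum_{i>n}\eps_i$ that you flag only affects the rate of convergence of $\rho(x)$ \emph{from below}, which is irrelevant for the recurrence direction.

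A minor remark on excluding left-transience: the paper simply couples $Z^-$ with the simple random walk chain, using $q_i\le\half$ pointwise. Your route of applying Theorem~\ref{thm:Fundamental} to $Z^-$ and computing $\theta^-\to-\delta$ also works when $\delta<\infty$, but is heavier, and when $\delta=\infty$ the hypotheses of Proposition~\ref{prop:concentration of U} and Lemma~\ref{lem:VarCalculation} need not hold for $U_q$ (one does not have $|\overline{q}_n-\overline{q}|=O(1/n)$), so the direct coupling is the safer argument there.
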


\begin{proof}
    Note first that $\delta=\infty$, then the walk is right transient. This
    can be shown by coupling the Kesten-Kozlov-Spitzer process $Z^+$
    with a corresponding process in bounded environment as explained
    in Claim \ref{claim:positive env rho(x) to delta}. %Footnote~\ref{fn:delta=infty} in
%    beginning of \S\ref{sec:known results}. \footnote{
Actually this coupling can be done for all $\delta>1$ and it is left
to prove the recurrence part. However, we prefer here to show how to
deduce it from Theorem~\ref{thm:Z char transience}. %}
    Suppose now that $\delta < \infty$. We prove the theorem by considering
    the step distribution $U_p(x)$ of the corresponding Markov chain $Z^+$,
    and computing the corresponding parameters $\mu$ and $\theta$.

    \begin{lem}\label{lem:positive env mu rho}
        Let $p$ be a positive and elliptic cookie environment.
        Suppose that $\delta = \sum_{i=1}^\infty (2p_i-1) < \infty$.
        Let $U_p$ be the step distribution of the corresponding Markov
        chain $Z^+$. Then
        $\lim_{x \to \infty}\rho(x) = \lim_{x \to \infty}(\E[U_p(x)]-x) = \delta$.
        Furthermore, $\rho(x) \leq \delta$ for all $x \geq 0$.
    \end{lem}

    \begin{lem}\label{lem:positive env nu}
        Let $p$ be a positive and elliptic cookie environment.
        Suppose that $\delta < \infty$.
        Let $U_p$ be the step distribution of the corresponding Markov
        chain $Z^+$. Then $\nu(x) = \frac{1}{x}\E[(U_p(x) - x)^2] \to 2$.
        Furthermore, for all $x \in \N_0$ sufficiently large we have
        $|\nu(x) -2 | \leq C \log^4(x)/\sqrt{x}$ for some constant $C \in \R$
        that depends only on $p$.
    \end{lem}

    The following corollary is immediate from Lemmas~\ref{lem:positive env mu rho}
    and~\ref{lem:positive env nu}.
    \begin{corollary}\label{cor:positive env}
        Let $p$ be a positive and elliptic cookie environment.
        Suppose that $\delta< \infty$.
        Let $U_p$ be the step distribution of the corresponding Markov
        chain $Z^+$. Then
        \begin{enumerate}
        \item
            $\mu = \lim_{x \to \infty}\frac{\E[U_p(x)]}{x} = 1$.
        \item
            $\lim_{x \to \infty}\theta(x) = \lim_{x \to \infty} \frac{2\rho(x)}{\nu(x)} = \delta$.
        \item
            For all $x \in \N_0$ sufficiently large we have
            $\theta(x) \leq \delta + C \cdot \log^4(x)/\sqrt{x}$ for some
            constant $C \in \R$ that depends only on $p$.
        \end{enumerate}
    \end{corollary}

    Theorem~\ref{thm:positive env} follows by applying Theorem~\ref{thm:Fundamental}
    with the parameters given in Corollary~\ref{cor:positive env}, together with
    Theorem~\ref{thm:Z char transience}.
    Consider the step distribution $U_p$ of the Markov chain $Z^+$
    defined by $p$, and recall that by Proposition~\ref{prop:concentration of U}
    we have concentration of $\frac{U_p(x)}{x}$ around $\mu$ as required in the
    conditions of Theorem~\ref{thm:Fundamental}.
    By the first item of Corollary~\ref{cor:positive env} we have that $\mu = 1$.

    If $\delta > 1$, then by the second item of
    Corollary~\ref{cor:positive env} we have $\lim_{x \to \infty}\theta(x) = \delta > 1$,
    and thus, by Theorem~\ref{thm:Fundamental} $Z^+$ is transient a.s.
    Therefore, by Theorem~\ref{thm:Z char transience} ERW in $\om(p)$ is right transient a.s.

    Suppose now that $\delta \leq 1$. Then, by the second and the third items
    of Corollary~\ref{cor:positive env} we have $\theta(x) \leq 1 + O(\log^4(x)/\sqrt{x})$
    for all $x \in \N_0$ sufficiently large,
    and thus by Theorem~\ref{thm:Fundamental} $Z^+$ is recurrent a.s.
    Therefore, by Theorem~\ref{thm:Z char transience} ERW in $\om(p)$ is not right transient a.s.
    In order to see that ERW in $\om(p)$ cannot be left transient either we can
    couple the Markov chain $Z^-$ with the one defined by a simple random walk on $\Z$.
    Therefore, if $\delta \leq 1$, then ERW on $\om(p)$ returns to the origin i.o. a.s.
\end{proof}

We now turn to prove Lemmas~\ref{lem:positive env mu rho}
and~\ref{lem:positive env nu}.

%%%%%%%%%%%%%%%%%%%%%%%%%%%%%%%%%%%%%%%%%%%%%%%%%%%%%%%%%%%%%%%%%%%%%%%%%%%%%%%
\subsubsection{Proof of Lemma~\ref{lem:positive env mu rho}}
%%%%%%%%%%%%%%%%%%%%%%%%%%%%%%%%%%%%%%%%%%%%%%%%%%%%%%%%%%%%%%%%%%%%%%%%%%%%%%%

We start with the first part of the lemma.
\begin{claim}\label{claim:positive env rho(x) to delta}
    Let $p$ be a positive and elliptic cookie environment,
    and let $\delta < \infty$.
    Then $\lim_{x \to \infty}\rho(x) = \lim_{x \to \infty}(\E[U_p(x)]-x) = \delta$.
\end{claim}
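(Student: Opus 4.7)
The plan is to represent $U_p(x)-x$ as a stopped martingale increment. Let $B_i\sim B(p_i)$ be independent and set $S_n=\sum_{i=1}^n B_i$ (successes), $F_n=n-S_n$ (failures), and let $\tau_x=\inf\{n:F_n=x\}$, so that $U_p(x)=S_{\tau_x}$. Write $A_n=\sum_{i=1}^n(2p_i-1)$, so that $A_n\uparrow\delta$ monotonically since each term is nonnegative in the positive regime. Then
\[
M_n := (S_n-F_n)-A_n = \sum_{i=1}^n\bigl((2B_i-1)-(2p_i-1)\bigr)
\]
is a martingale with bounded increments ($|M_n-M_{n-1}|\le 2$).

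Note first that $\tau_x<\infty$ almost surely: since $\delta<\infty$ and $p_i\ge\tfrac12$, we have $2p_i-1\to 0$, hence $p_i\to\tfrac12$ and $\sum(1-p_i)=\infty$, so the Borel--Cantelli lemma gives infinitely many failures. Apply optional stopping to $M$ at the bounded stopping time $\tau_x\wedge N$ to obtain
\[
\E[S_{\tau_x\wedge N}-F_{\tau_x\wedge N}]=\E[A_{\tau_x\wedge N}].
\]
The right-hand side is bounded above by $\delta$, and by bounded convergence tends to $\E[A_{\tau_x}]$ as $N\to\infty$. On the left, $S_{\tau_x\wedge N}\uparrow U_p(x)$ and $F_{\tau_x\wedge N}\uparrow x$ monotonically. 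Because the positive environment satisfies $|\bar p_n-\tfrac12|\le \delta/(2n)$, Proposition~\ref{prop:concentration of U} applies with $\mu=1$ and gives $\E[U_p(x)]<\infty$, so monotone convergence lets us pass to the limit on the left as well. This yields the key identity
\[
\rho(x) = \E[U_p(x)]-x = \E[A_{\tau_x}].
\]

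From this identity everything follows. Since $0\le A_n\le\delta$ for all $n$, we immediately get $\rho(x)\le\delta$ for every $x\ge0$, which is the ``furthermore'' assertion. For the limit, as $x\to\infty$ we have $\tau_x\to\infty$ almost surely, so $A_{\tau_x}\to\delta$ almost surely, and dominated convergence (with dominator $\delta$) gives $\rho(x)=\E[A_{\tau_x}]\to\delta$.

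The only real obstacle is the justification of optional stopping, because $\tau_x$ is not a priori integrable and the martingale $M_n$ is not uniformly bounded. The clean fix is the truncation $\tau_x\wedge N$ combined with monotonicity of $S$ and $F$ (letting us use monotone convergence on the unbounded quantity $U_p(x)$) together with boundedness of $A_{\tau_x\wedge N}\le \delta$ (giving bounded convergence on the other side). Everything else -- the martingale identity and the dominated convergence step -- is routine once that exchange of limits is established.
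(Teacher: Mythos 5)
Your proof is correct, and it takes a genuinely different route from the paper's. The paper proves this claim by coupling: it truncates the environment to a bounded one $p'$ agreeing with $p$ up to a high level, uses the exact formula $\E[U_{p'}(x)] = x + \sum_{i}(2p'_i-1)$ for bounded environments, and then controls $|\E[U_p(x)]-\E[U_{p'}(x)]|$ via a natural coupling of the two Bernoulli sequences. You instead observe that $M_n=(S_n-F_n)-\sum_{i\le n}(2p_i-1)$ is a martingale with bounded increments, apply optional stopping at $\tau_x\wedge N$, and pass to the limit (bounded convergence on the right since $\sum_{i\le n}(2p_i-1)\le\delta$, monotone convergence on the left, with $\E[U_p(x)]<\infty$ supplied by Proposition~\ref{prop:concentration of U}) to arrive at the identity $\rho(x)=\E\bigl[\sum_{i\le\tau_x}(2p_i-1)\bigr]$. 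This identity is stronger than the statement: it gives the limit $\rho(x)\to\delta$ by dominated convergence, but it also gives immediately both the bound $\rho(x)\le\delta$ and the monotonicity of $\rho(x)$ (since $\tau_x$ is nondecreasing in $x$ and the partial sums are nondecreasing in $n$), which the paper establishes separately in the next claim (Claim~\ref{claim:positive env rho(x) < delta}) via a different monotonicity argument. Your approach thus unifies and slightly streamlines two of the paper's steps at the modest cost of invoking optional stopping; the paper's coupling is more elementary and fits its general theme of comparison with bounded environments. One cosmetic caveat: your local $A_n=\sum_{i\le n}(2p_i-1)$ collides with the paper's notation $A_n=\frac1n\sum_{i\le n}p_i(1-p_i)$ from \eqref{eq:defAn}, so a different symbol would be advisable if this were incorporated.
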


\begin{proof}
    Fix $\eps > 0$ sufficiently small. We claim that there is some $M \in \N$ large enough
    so that $|\E[U_p(x) -x] - \delta| \leq \eps$ for all $x \geq M$.

    Let $M$ be sufficiently large so that
    $\sum_{i=M}^\infty (2p_i-1) < \frac{\eps}{2}$.
    For $x > M$ define a bounded environment by $p'$ by
    letting $p'_i=p_i$ for $i < x$ and $p'_i=\half$ for all $i \geq x$. That
    is, $p'$ is obtained from $p$ by ``forgetting'' all its cookies above
    level $M$. Then
    $\sum_{i=1}^\infty |p_i-p'_i| = \half \sum_{i=x}^\infty (2p_i-1) < \frac{\eps}{4}$.
    Since $p'$ is a bounded environment, by~\eqref{eq:fin env E[U(x)]} we have
    \[
        |\E[U_{p'}(x)] - x - \delta| \leq
        |\E[U_{p'}(x)] - x - \sum_{i=1}^{x}(2p_i-1)| + \frac{\eps}{2}=\frac{\eps}{2},
    \]
    and so, it is left to prove that
    \begin{equation}\label{eq:E[Up] diff}
        | \E[U_p(x)] - \E[U_{p'}(x)] | < \frac{\eps}{2}.
    \end{equation}
    We prove~\eqref{eq:E[Up] diff} by coupling the two processes in
    the natural way. For each $i \in\N$ let $Y_i \sim U[0,1]$ be
    i.i.d.\ uniform random variables. Define
    $U_p(x)=\inf\{k \in \N : \sum_{i=1}^k {\bf 1}_{[Y_i > p_i]}=x\}-x$,
    and analogously let
    $U_{p'}(x)=\inf\{k \in \N: \sum_{i=1}^k {\bf 1}_{[Y_i > p'_i]}=x\}-x$.
    Clearly both $U_p(x)$ and $U_{p'}(x)$ have the correct distribution.
    In addition we have $U_p(x) \geq U_{p'}(x)$.
    Let $T = U_{p'}(x)+x$ be the time when $U_{p'}(x)$ reaches $x$ failures,
    and let $K = x - \sum_{i=1}^T {\bf 1}_{[Y_i > p_i]}$, be the number of
    failures of $U_p(x)$ after time $T$.
    Then
    \[
        |U_p(x)-U_{p'}(x)|\sim U_{s^{T+1}(p)}(K),
    \]
    where $s^{T+1}(p) = (p_{T+1},p_{T+2},\dots)$ is the right shift of the
    cookie environment $p$. Taking the expectation on both sides, we get
    \[
        \E[ |U_p(x)-U_{p'}(x)| ] = \E[ U_{s^{T+1}(p)}(K) ] \leq \a \cdot \E[K],
    \]
    where $\a = \sup_{k \geq M} \{\E[U_{s^k(p)}(1)] \}$.
    We show below that $\E[K] = \frac{\eps}{4}$ and $\a \leq 2$,
    which is clearly enough in order to prove~\eqref{eq:E[Up] diff},
    since $T \geq x > M$,

    In order to see that $\E[K] = \frac{\eps}{4}$
    note that $K \leq \sum_{i=1}^\infty {\bf 1}_{[p'_i < Y_i \leq p_i]}$.
    Therefore, taking the expectation we get
    \[
        \E[K] \leq \sum_{i=1}^\infty |p_i - p'_i| < \frac{\eps}{4}.
    \]
    In order to prove that $\a < 2$ note that in every environment $p$ we have
    \begin{equation}\label{eq:ExpecOfMultyGeomRV}
        \E[ U_p(1) ] = \sum_{n=1}^\infty \Pr[U_p(1) \geq n] = \sum_{n=1}^\infty \prod_{i=1}^n p_i.
    \end{equation}
    In particular, if for some $\gamma  < 1$
    it holds that $p_i < \gamma$ for all $i \geq k$ ,
    then
    \begin{equation}\label{eq:E[U(1)] for elliptic p}
        \E[U_{s^k(p)}(1)]
        \leq \sum_{n=1}^\infty \gamma^n = \frac{\gamma}{1-\gamma}.
     \end{equation}
    Recall that $M$ is sufficiently large
    so that $\sum_{i=M}^\infty (2p_i-1)<\frac{\eps}{2}$, and in particular
    $p_i < \half + \frac{\eps}{4}$ for all $i \geq M$.
    Therefore, it follows that
    $\a < \frac{\half + \frac{\eps}{4}}{\half - \frac{\eps}{4}} < 2$
    for all $\eps < 2/3$.
%    In particular, if for some $\gamma \in (0,\half)$
%    it holds that $p_i \in [\gamma,1-\gamma]$ for all $i \geq k$ ,
%    then
%    \begin{equation}\label{eq:E[U(1)] for elliptic p}
%        \frac{\gamma}{1-\gamma} = \sum_{n=1}^\infty \gamma^n
%        \leq \E[U_{s^k(p)}(1)]
%        \leq \sum_{n=1}^\infty (1-\gamma)^n = \frac{1-\gamma}{\gamma}.
%     \end{equation}
%    Recall that $M$ is sufficiently large such that
%    $\sum_{i=M}^\infty (2p_i-1)<\frac{\eps}{2}$, and in particular
%    $p_i \in (\half-\frac{\eps}{4}, \half + \frac{\eps}{4})$ for all $i \geq M$.
%    Therefore, since $T \geq x > M$, this implies that
%    $\a < \frac{\half + \frac{\eps}{4}}{\half - \frac{\eps}{4}} < 2$
%    for all $\eps > 0$ sufficiently small.
    This completes the proof of Claim~\ref{claim:positive env rho(x) to delta}
\end{proof}

Next, we prove the ``furthermore'' part of Lemma~\ref{lem:positive env mu rho}.

\begin{claim}\label{claim:positive env rho(x) < delta}
    Let $p$ be a positive and elliptic cookie environment,
    and let $\delta < \infty$.
    Then $\rho(x) \le \delta$ for all $x \geq 0$.
\end{claim}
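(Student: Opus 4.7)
\textbf{Proof plan for Claim~\ref{claim:positive env rho(x) < delta}.} The plan is to express $\rho(x)$ as a weighted sum of the per-trial drifts $(2p_i-1)$, so that positivity of the environment immediately yields the bound.

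Let $B_i\sim B(p_i)$ be the independent Bernoulli random variables in the definition of $U_p(x)$, and set $T:=U_p(x)+x$, so $T$ is the first time we have accumulated $x$ failures. Writing $N_i:=\mathbf{1}[i\le T]$, I have, a.s.,
\[
U_p(x)-x \;=\; \sum_{i=1}^{T}B_i \;-\; \sum_{i=1}^{T}(1-B_i) \;=\; \sum_{i=1}^{\infty}(2B_i-1)\,N_i,
\]
where $\sum_{i=1}^T(1-B_i)=x$ holds deterministically by the definition of $T$. The key observation is that the event $\{T\ge i\}=\{\sum_{j<i}(1-B_j)<x\}$ is a function of $B_1,\dots,B_{i-1}$, so $N_i$ is independent of $B_i$ and
\[
\E[(2B_i-1)\,N_i] \;=\; (2p_i-1)\,\Pr[T\ge i].
\]

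The next step is to exchange expectation and sum. I would split $\E\sum B_iN_i=\E[U_p(x)]$ and $\E\sum(1-B_i)N_i=x$ into two non-negative sums and handle each by monotone convergence; subtracting gives
\[
\rho(x) \;=\; \sum_{i=1}^{\infty}(2p_i-1)\,\Pr[T\ge i].
\]
This interchange is legitimate provided $\E[U_p(x)]<\infty$, and verifying this finiteness is what I expect to be the main obstacle. I would argue as follows: since $\delta<\infty$ forces $2p_i-1\to 0$, combined with ellipticity (finitely many initial $p_i$'s, each strictly below $1$) we get $p^{*}:=\sup_i p_i<1$. Decomposing $U_p(x)=G_1+\cdots+G_x$ into the runs of successes between consecutive failures, the trivial tail estimate $\Pr[G_k\ge n]\le (p^{*})^n$ yields $\E[G_k]\le p^{*}/(1-p^{*})$, hence $\E[U_p(x)]\le xp^{*}/(1-p^{*})<\infty$.

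Once the formula for $\rho(x)$ is in hand, the claim is immediate: in a positive environment every term $(2p_i-1)$ is non-negative and $\Pr[T\ge i]\le 1$, so
\[
\rho(x) \;\le\; \sum_{i=1}^{\infty}(2p_i-1) \;=\; \delta.
\]
As a sanity check, this formula is also consistent with Claim~\ref{claim:positive env rho(x) to delta}: as $x\to\infty$ the stopping time $T\to\infty$ a.s., so $\Pr[T\ge i]\to 1$ for every $i$, and dominated convergence (against the summable majorant $(2p_i-1)$) recovers $\rho(x)\to\delta$.
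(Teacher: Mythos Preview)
Your argument is correct and takes a genuinely different route from the paper. The paper proves the bound indirectly: it first invokes Claim~\ref{claim:positive env rho(x) to delta} to know $\rho(x)\to\delta$, and then shows that $\rho(x)$ is monotone non-decreasing in $x$ by writing $\rho(x+1)-\rho(x)=\E[U_{p'}(1)-1]$ for a random shift $p'$ of $p$, and observing that this is $\ge 0$ since any positive environment dominates the constant $\tfrac12$ environment.

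You instead establish the explicit stopping-time identity $\rho(x)=\sum_{i\ge 1}(2p_i-1)\Pr[T\ge i]$ via independence of $\{T\ge i\}$ and $B_i$, and then use positivity termwise. This is self-contained (it does not rely on the limit statement, and in fact recovers it as you note), and it yields additional information: since $T=T_x$ is stochastically increasing in $x$, the factors $\Pr[T_x\ge i]$ are increasing in $x$, so your formula also reproves the monotonicity of $\rho(x)$ that the paper obtains. The paper's approach, on the other hand, is shorter once Claim~\ref{claim:positive env rho(x) to delta} is in hand and avoids the integrability check for $U_p(x)$ that you needed for the interchange.
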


\begin{proof}
By Claim \ref{claim:positive env rho(x) to delta}, $\lim_{x\to\infty}\rho(x)=\delta$ and so the claim will follow once we show that $\rho(x)$ is monotonically increasing in $x$. Note that for $p=(p_1,p_2,...)$, we have that $\E[U_p(1)]$ is monotonically increasing in each $p_i$. Indeed, this can be seen either from the explicit formula \eqref{eq:ExpecOfMultyGeomRV}, or using the natural coupling specified in the proof of Claim \ref{claim:positive env rho(x) to delta}. (Actually, for every $x$ it holds that $\E[U_p(x)]$ is monotonically increasing in each $p_i$, but we do not use that.) By comparing to the constant $1/2$ environment we observe that for a positive environment $p$ it holds that $\E[U_p(1)-1]\ge 0$. Therefore,
$\rho(x+1)=
\E[U_p (x+1)-(x+1)] =
\E[U_p (x) - x + U_{p'} (1) - 1)] =
\E[U_p (x) - x] + E[U_{p'} (1) +1)] \ge
\E[U_p (x) - x]=
\rho(x)$,
where $p'$ is some random (but a.s.\ finite) shift of $p$ and hence also positive, and the inequality follows from the last observation.
\end{proof}

%%%%%%%%%%%%%%%%%%%%%%%%%%%%%%%%%%%%%%%%%%%%%%%%%%%%%%%%%%%%%%%%%%%%%%%%%%%%%%%
\subsubsection{Proof of Lemma~\ref{lem:positive env nu}}
%%%%%%%%%%%%%%%%%%%%%%%%%%%%%%%%%%%%%%%%%%%%%%%%%%%%%%%%%%%%%%%%%%%%%%%%%%%%%%%

    The lemma is an immediate consequence of
    Lemma~\ref{lem:VarCalculation}. Indeed, since
    $\sum_{i=1}^\infty (2p_i-1) < \infty$ it follows that
    $|\ov{p}_n - \half| = |\frac{1}{n}\sum_{i=1}^n p_i - \half|
    = \frac{1}{2n} \sum_{i=1}^n (2p_i - 1) \leq \frac{\delta}{2n}$.
    Therefore, by Lemma~\ref{lem:VarCalculation} the limit
    of $\frac{\E[(U_{ p }(x)-x)^2]}{x}$ as $x$ tends to infinity exists, and is equal to
    \[
        \lim_{x \to \infty} \frac{1}{x}\E[(U_{ p }(x)-x)^2] = 8A,
    \]
    where
    \[
        A = \lim_{n \to \infty} A_n
          = \lim_{n \to \infty} \frac{1}{n}\sum_{i=1}^n p_i(1-p_i) = \frac{1}{4}.
    \]
    By the ``moreover'' part of Lemma~\ref{lem:VarCalculation}
    it follows that the rate of convergence is bounded by $C \cdot \log^4(x)/\sqrt{x}$,
	that is, for all $x \in \N_0$ sufficiently large it holds that
	\[
    	\left| \frac{1}{x} \cdot \E[(U_p(x)-x)^{2}] - 2 \right| =
    	O \left( \frac{\log^4(x)}{\sqrt{x}} \right),
	\]
	where the constant implicit in the $O()$ notation depends only on $p$.
	This completes the proof of Lemma~\ref{lem:positive env nu}. \qed

%%%%%%%%%%%%%%%%%%%%%%%%%%%%%%%%%%%%%%%%%%%%%%%%%%%%%%%%%%%%%%%%%%%%%%%%%%%%%%%
\subsection{Branching process with migration}\label{subsec:BPwM}
%%%%%%%%%%%%%%%%%%%%%%%%%%%%%%%%%%%%%%%%%%%%%%%%%%%%%%%%%%%%%%%%%%%%%%%%%%%%%%%

As a corollary from Theorem~\ref{thm:Fundamental} we obtain the following
result on branching processes with migration.
In order to define branching process with migration let $\xi$ and $\eta$ be
two random variables, where the support of $\xi$ is $\N_0$ and $\eta \in \Z$.
Suppose that both $\xi$ and $\eta$ have an exponential tail. That is,
there is some $\a>0$ and $t_0$ such that $\Pr[\xi > t] \le \exp(-\a t)$
and $\Pr[|\eta| > t] \le \exp(-\a t)$ for all $t>t_0$.

Let $\mu = \E[\xi]$, $\rho = \E[\eta]$, $\nu = \Var[\xi]$,
and let $\theta = \frac{2\rho}{\nu}$. Note that by the assumption on $\xi$
and $\eta$ all these quantities are finite. For $i,n,m \in\N$
let $\xi_i^{(n)}$ and $\eta_{(m)}$ be independent random variables so that
$\xi_i^{(n)} \sim \xi$ and $\eta_{(m)}\sim\eta$.

A branching process with migration is a random sequence $Z=(Z_n)_{n \geq 0}$
defined by setting $Z_0 = 1$, and for each $n \geq 0$ the random variable
$Z_{n+1}$ conditioned on $Z_n$ is distributed as
\[
    Z_{n+1} =
            \begin{cases}
                \max\left\{\sum_{i=1}^{Z_n}\xi_i^{(n)} + \eta_{(n+1)},0\right\} & \mbox{ if } Z_n > 0 \\
                0 & \mbox{ if } Z_n = 0.
            \end{cases}
\]
The random variable $\xi$ is the \emph{offspring distribution},
and $\eta$ is the \emph{migration distribution}.

We say that the process $Z$ \emph{survives} if $Z_n > 0$ for all $n$ (equivalently, the Markov chain $Z$ is transient). Otherwise we
say that $Z$ dies out (equivalently, the Markov chain $Z$ is recurrent). The following theorem gives necessary and sufficient
conditions for survival of $Z$.
\begin{thm}\label{thm:BPwM}
    Consider the branching process with migration $Z=(Z_n)_{n \geq 0}$ as above.
    Then
    \begin{itemize}
        \item
            If $\mu>1$, then $Z$ a.s. survives.
        \item
            If $\mu<1$ then $Z$ a.s. dies out.
        \item
            Assume $\mu=1$, then $Z$ dies out a.s. if and only if $\theta  = \frac{2\rho}{\nu} \leq 1$.
    \end{itemize}
\end{thm}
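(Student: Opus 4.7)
The plan is to recognize the branching process $(Z_n)$ as a Markov chain on $\N_0$ with step distribution $U(x)=\max\{S_x+\eta,0\}$ for $x\geq 1$, where $S_x=\sum_{i=1}^{x}\xi_i$ with $\xi_i$ i.i.d.\ copies of $\xi$ independent of $\eta$; to satisfy the irreducibility assumption of Theorem~\ref{thm:Fundamental} I would set $U(0)=1$, a modification that does not affect survival since the latter only concerns trajectories that stay in $\N$. I would then identify $\mu,\rho,\nu,\theta$ as defined in Theorem~\ref{thm:Fundamental} with those of Theorem~\ref{thm:BPwM}, verify the concentration hypothesis, and appeal to Theorem~\ref{thm:Fundamental}.

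First, I would address concentration. Because $\xi$ has an exponential tail, a Bernstein inequality yields $\Pr[|S_x-\mu x|>\eps x]\leq 2\exp(-c\eps^2 x/(1+\eps))$ for $x$ sufficiently large and any $\eps>0$, and the independent exponential-tail term $\eta$ can be absorbed into a bound of the same form. The truncation $\max\{\cdot,0\}$ only moves $U(x)$ upward, and the event $\{S_x+\eta<0\}$ has probability $e^{-\Omega(x)}$ since $\mu=\E[\xi]>0$. Hence the concentration assumption of Theorem~\ref{thm:Fundamental} is satisfied with the required $(1+\mu+\eps)$ denominator, and the same estimate yields $\E[U(x)]/x\to\mu$. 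The two cases $\mu\neq 1$ of Theorem~\ref{thm:BPwM} then follow immediately from Theorem~\ref{thm:Fundamental}.

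It remains to treat the critical case $\mu=1$. Here I would use the exponential truncation estimate to write
\begin{equation*}
    \rho(x)=\E[U(x)]-x=\E[S_x+\eta]-x+O(e^{-cx})=\rho+O(e^{-cx}),
\end{equation*}
and, using independence of $\xi_1,\dots,\xi_x,\eta$,
\begin{equation*}
    \nu(x)=\frac{1}{x}\E[(U(x)-x)^2]=\frac{\Var(S_x+\eta)+\rho^2}{x}+O(e^{-cx})=\nu+O(1/x),
\end{equation*}
so $\theta(x)=2\rho(x)/\nu(x)=\theta+O(1/x)$, a rate far faster than $1/\log x$. Thus if $\theta>1$, then for all $x$ large enough $\theta(x)>1+2/\ln(x)+\alpha(x)x^{-1/2}$ for a suitable $\alpha(x)\to\infty$, and Theorem~\ref{thm:Fundamental} gives survival with positive probability; if $\theta\leq 1$, then for all $x$ large enough $\theta(x)<1+1/\ln(x)-\alpha(x)x^{-1/2}$, yielding almost sure extinction.

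The main obstacle I anticipate is the careful execution of the concentration estimate: since $\xi$ is unbounded one needs a Bernstein-type inequality rather than Hoeffding, and one must verify that the bound matches the precise form required by Theorem~\ref{thm:Fundamental}, as well as check that the truncation by $\max\{\cdot,0\}$ does not spoil the tail estimate or the computations of $\rho(x)$ and $\nu(x)$. A secondary and minor subtlety is the boundary case $\theta=1$, which is covered precisely because the $O(1/x)$ approximation of $\theta(x)$ by $\theta$ fits comfortably inside the $1/\log x$ window left open by Theorem~\ref{thm:Fundamental}.
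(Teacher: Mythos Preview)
Your proposal is correct and follows essentially the same approach as the paper: modify $U(0)$ to make the chain irreducible, verify the concentration hypothesis via a Bernstein-type bound for exponentially-tailed summands (the paper phrases this as a ``Hoeffding's type inequality for random variables with exponential tails''), show that the truncation $\max\{\cdot,0\}$ contributes only $e^{-cx}$, compute $\rho(x)=\rho+O(e^{-cx})$ and $\nu(x)=\nu+O(1/x)$, and then invoke Theorem~\ref{thm:Fundamental}. Your handling of the boundary case $\theta=1$ and your explicit remark that Bernstein rather than Hoeffding is needed are, if anything, slightly more careful than the paper's sketch.
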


\begin{proof}
    Note that the process $Z=(Z_n)_{n \geq 0}$ is a Markov chain on $\N_0$ with the
    step distribution
    \[
        U(x) =
                \begin{cases}
                    \max\left\{\sum_{i=1}^x \xi_i^{(1)} + \eta,0 \right\} & \mbox{ if } x > 0 \\
                    0 & \mbox{ if } x = 0.
                \end{cases}
    \]
    Our Theorem \ref{thm:Fundamental} is formulated for irreducible
    chains, but we can simply change $U(0)$ to be, say, 1, and replace
    ``dies out'' with ``reaches 0'' and we are back in the irreducible
    case. % and aperiodic
    %as the support of $\xi$ is $\N_0$.
    %(Note that it is actually irreducible everywhere except in $0$,
    %but since the Markov chain stops when it reaches $0$, then it can be assumed
    %to be irreducible also in $0$ without loss of generality.)
    We now apply Theorem~\ref{thm:Fundamental} to the process $Z$. Note that:
    \begin{enumerate}
    \item
        The sum $\sum_{i=1}^x \xi_i^{(1)}+\eta$ is concentrated around its mean,
        which follows from Hoeffding's type inequality for random
        variables with exponential tails.
        In particular, $\frac{U(x)}{x}$ is concentrated around $\mu$.
    \item
        This shows that the effect of taking the maximum with zero
        is negligible. Indeed, for large values of $x$ we have
        \begin{align*}
            \Pr[U(x) = 0]
            & \leq
            \Pr \left[ \sum_{i=1}^x \xi_i^{(1)} > \mu x/2 \text{ and } \eta < - \mu x/2 \right]
            +\Pr \left[ \sum_{i=1}^x \xi_i^{(1)} < \mu x/2\right]  \\
            & \leq \Pr \left[ \eta < - \mu x/2 \right]
            +\Pr \left[ \sum_{i=1}^x \xi_i^{(1)} < \mu x/2\right]  \\
            & \leq \exp(-cx),
        \end{align*}
        for some constant $c$ which depends on $\xi$ and $\eta$ but not on $x$.
        Therefore $|\E[U(x)] - \mu x + \rho|\le  \exp(-c'x)$
        for some constant $c'>0$ that depends on $\xi$ and $\eta$ but not on $x$.
    \item
        By independence of $\xi_i^{(n)}$'s and $\eta_{(m)}$ we have
        \begin{align*}
            \E[(U(x)- \mu x)^2] &= \E[(\sum_{i=1}^{x}(\xi_i^{(1)} -
              \mu) + \eta^2] + O(e^{-cx})\\
            &= \sum_{i=1}^{x}\E[(\xi -\mu)^2] + \E[\eta^2] +
            O(e^{-cx})\\
            & = \nu x + \E[\eta^2] + O(e^{-cx}),
        \end{align*}
        and hence $\frac{\E[U(x)-x]^2}{x} = \nu + \frac{\E[\eta^2]}{x}+O(e^{-cx})$.
    \end{enumerate}
    Therefore, by applying Theorem~\ref{thm:Fundamental} we get the desired conclusion.
\end{proof}

%%%%%%%%%%%%%%%%%%%%%%%%%%%%%%%%%%%%%%%%%%%%%%%%%%%%%%%%%%%%%%%%%%%%%%%%%%%%%%%
\section{Open problems}\label{sec:Open Problems}
%%%%%%%%%%%%%%%%%%%%%%%%%%%%%%%%%%%%%%%%%%%%%%%%%%%%%%%%%%%%%%%%%%%%%%%%%%%%%%%

\begin{enumerate}
\item
For ERW with periodic environments, %find an explicit condition in terms of the period for positive speed of the walk.
compute the speed in terms of the period.
\item
Find an identically piled (uniformly) elliptic cookie environments
so that $\mu= \theta = 1$ and the walk is right transient. Note that by
Theorem~\ref{thm:Fundamental} it is enough to find an environment so that
$\theta(x)-1$ is eventually larger than $\frac{2}{\ln(x)} + \alpha(x)\cdot x^{-\half}$
for some $\alpha(x)$ such that $\alpha(x)\nu(x) \to+\infty$.
\end{enumerate}

%%%%%%%%%%%%%%%%%%%%%%%%%%%%%%%%%%%%%%%%%%%%%%%%%%%%%%%%%%%%%%%%%%%%%%%%%%%%%%%
\section*{Acknowledgments}
%%%%%%%%%%%%%%%%%%%%%%%%%%%%%%%%%%%%%%%%%%%%%%%%%%%%%%%%%%%%%%%%%%%%%%%%%%%%%%%

%We would like to thank the anonymous referees for their remarks and suggestions
%that helped us improving the paper, both in style and in content.
T.O.~would like to thank Ofer~Zeitouni for his enlightening ideas and constant encouragement in many hours of discussions.
T.O.~thanks also Vitali Wachtel for the reference to Lamperti's
work~\cite{Lamperti}.
We are also grateful to the anonymous referee who pointed out to us the paper
of Menshikov, Asymonth, and Iasnogorodski~\cite{zerodrift}. We thank Nick Travers for spotting a mistake in
the statement of Theorem \ref{thm:Fundamental}.
We thank Itai~Benjamini for useful discussions.
G.K.~and T.O.~are partially supported by the Israel Science Foundation.
I.S.~is supported by ERC grant number 239985.
\bibliography{BibERW}
\bibliographystyle{plain}

%%%%%%%%%%%%%%%%%%%%%%%%%%%%%%%%%%%%%%%%%%%%%%%%%%%%%%%%%%%%%%%%%%%%%%%%%%%%%%%
\appendix
%%%%%%%%%%%%%%%%%%%%%%%%%%%%%%%%%%%%%%%%%%%%%%%%%%%%%%%%%%%%%%%%%%%%%%%%%%%%%%%

%%%%%%%%%%%%%%%%%%%%%%%%%%%%%%%%%%%%%%%%%%%%%%%%%%%%%%%%%%%%%%%%%%%%%%%%%%%%%%%
\section{Survival of irreducible % aperiodic
Markov chains on \texorpdfstring{$\N_0$}{the integers}}\label{sec:mc survival criterion}
%%%%%%%%%%%%%%%%%%%%%%%%%%%%%%%%%%%%%%%%%%%%%%%%%%%%%%%%%%%%%%%%%%%%%%%%%%%%%%%

%This hides the \subsection* commands from the table of contents.
\addtocontents{toc}{\protect\setcounter{tocdepth}{1}}

In this appendix we prove our criterion for transience of
Markov chains on $\N_0$ stated in Theorem~\ref{thm:Fundamental}.
Recall that we denote by
$Z = (Z_n)_{n \geq 0}$ an irreducible % and aperiodic
discrete time Markov chain on $\N_0$ starting at $Z_0 = 1$, and
that we denote by $U=(U(x))_{x \geq 0}$ its step distribution. Recall
also the asymptotic mean $\mu$, the drift
$\rho(x)$, the diffusion constant $\nu(x)$ and the ratio $\theta(x)$
defined just before Theorem~\ref{thm:Fundamental} (page~\pageref{thm:Fundamental}).
The proof of Theorem~\ref{thm:Fundamental} relies on the classical
approach of \emph{Lyapunov functions}. Theorems~2.1 and~2.2 of
Lamperti~\cite{Lamperti} will serve as a convenient
reference. The following theorem is an immediate corollary of them.

\begin{thm}\label{thm:LyapunovFosterCriterion}
Let $Z$ be an irreducible %and aperiodic
discrete time Markov chain on $\N_0$, with step distribution $U = (U(x))_{x \geq 0}$.
That is $\Pr[Z_{n+1}=y|Z_n=x] = \Pr[U(x)=y]$ for all $n \geq 0$.
Then
    \begin{enumerate}
    \item
        $Z$ is recurrent whenever there is some function $V : \N_0 \to (0,\infty)$ such that
        $\lim_{x \to \infty}V(x) = \infty$ and $\E[V(U(x))] \leq V(x)$ for all
        sufficiently large values of $x$.
    \item
        $Z$ is transient whenever there is some function $V: \N_0 \to (0,\infty)$
        %strictly positive and a finite set $A \subseteq \N_0$
        such that $\lim_{x \to \infty}V(x) = 0$
        and $\E[V(U(x))] \leq V(x)$ for all %$x\in \N_0 \setminus A$.
        sufficiently large values of $x$.
    \end{enumerate}
%    Here $Z$ is said to be \emph{recurrent} if $\Pr[Z_n=0 \mbox{ for some } n] = 1$
%    and is called \emph{transient} otherwise.
\end{thm}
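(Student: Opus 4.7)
The plan is to derive both statements from the standard Foster--Lyapunov machinery: once $V(Z_n)$ is a nonnegative supermartingale above some threshold, the martingale convergence theorem gives recurrence (when $V\to\infty$) and the optional stopping theorem gives transience (when $V\to 0$). The only subtlety is that the supermartingale inequality is only assumed to hold for $x$ sufficiently large, which is handled by stopping at the first time the chain drops below that threshold. Since the theorem is stated as an immediate corollary of Theorems~2.1 and~2.2 in Lamperti~\cite{Lamperti}, it is enough to sketch the reduction.

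Let $N$ be large enough that $\E[V(U(x))]\le V(x)$ for all $x\ge N$, and let $\tau=\inf\{n\ge 0: Z_n<N\}$. For the recurrence part, assume $V(x)\to\infty$ and start the chain at some $Z_0\ge N$. The stopped process $M_n=V(Z_{n\wedge \tau})$ is a nonnegative supermartingale, so by the martingale convergence theorem it converges almost surely to a finite limit. On the event $\{\tau=\infty\}$ the chain stays in $\{N,N+1,\dots\}$ forever; irreducibility on $\N_0$ forces the chain in that case to visit each state only finitely often (otherwise it would visit $N-1$ infinitely often), so $Z_n\to\infty$, and hence $V(Z_n)\to\infty$, contradicting the a.s.\ convergence of $M_n$. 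Thus $\Pr[\tau<\infty]=1$ from any starting state $\ge N$, and by irreducibility together with the finiteness of $\{0,\dots,N-1\}$ the chain returns to $0$ almost surely, i.e.\ is recurrent.

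For the transience part, assume $V(x)\to 0$, and set $A=\min_{0\le y<N}V(y)$, which is strictly positive because $V$ is positive on the finite set $\{0,\dots,N-1\}$. Choose $x_0$ large enough that $V(x_0)<A$, which is possible since $V\to 0$. Starting from $Z_0=x_0$, the process $V(Z_{n\wedge\tau})$ is again a nonnegative supermartingale, so optional stopping yields $\E[V(Z_{\tau\wedge n})]\le V(x_0)$. On $\{\tau\le n\}$ we have $V(Z_\tau)\ge A$, whence $\Pr[\tau\le n]\le V(x_0)/A<1$. Letting $n\to\infty$ gives $\Pr[\tau=\infty \mid Z_0=x_0]>0$, so with positive probability the chain never drops below $N$. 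By irreducibility this is equivalent to transience of $Z$.

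The main obstacle is essentially notational rather than conceptual: one must be careful that the supermartingale inequality need only hold above the threshold $N$, which forces the introduction of the stopping time $\tau$ and the small check that $A>0$; the hardest real work is already done in Lamperti's paper, and our statement is the direct translation of his Theorems~2.1 and~2.2 into the language we use.
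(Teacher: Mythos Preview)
Your proof is correct and matches the paper's treatment: the paper does not prove this theorem at all but simply states it as an immediate corollary of Theorems~2.1 and~2.2 of Lamperti~\cite{Lamperti}, and your sketch faithfully reproduces the standard Foster--Lyapunov supermartingale argument behind those theorems. The only point worth tightening is the last step of the recurrence part (``by irreducibility together with the finiteness of $\{0,\dots,N-1\}$ the chain returns to $0$ almost surely''): the cleanest way to justify this is to note that the chain a.s.\ visits $\{0,\dots,N-1\}$ infinitely often, so the induced chain on this finite set is an irreducible finite Markov chain and hence recurrent.
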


A function $V$ satisfying one of the two possibilities in Theorem~\ref{thm:LyapunovFosterCriterion}
is called \emph{Lyapunov function} for the Markov chain defined by $U$.

We start our proof with the two simple cases of $\mu<1$ and $\mu>1$.

\subsection*{The case \texorpdfstring{$\boldsymbol{\mu<1}$}{mu
    smaller than 1}:}
We apply Theorem~\ref{thm:LyapunovFosterCriterion} on $U$ with Lyapunov function $V(x)=x$.
We claim that for all $x$ sufficiently large it holds that $\E[U(x)] \leq x$.
Indeed, $\E[U(x)] \leq \mu x + o(x) < 0$ for all sufficiently large $x$
since $\mu<1$. We are done since $V(x)\to\infty$ as $x\to\infty$.

\subsection*{The case \texorpdfstring{$\boldsymbol{\mu>1}$}{mu greater
    than 1}:}
Define $V(x):=\frac{1}{x+1}$. We claim that
for all sufficiently large $x$ we have $\E[\frac{1}{U(x)+1}] \leq \frac{1}{x+1}$.
Indeed, using first order Taylor expansion applied to the function
$f(U) = \frac{1}{U+1}$ around $x$ we have
\[
    \E \left[\frac{1}{U(x)+1} \right] = \frac{1}{x+1} - \E \left[ \frac{1}{(1+\xi)^2} \cdot (U(x)-x) \right]
\]
for some $\xi$ lying between $x$ and $U(x)$. By the concentration of $U$
for $x$ sufficiently large we have
$\frac{1+\mu}{2} \leq \frac{U(x)}{x} \leq 2\mu$ with high probability,
in which case the expression in the expectation is
$\frac{U(x)-x}{(1+\xi)^2} \geq \frac{c}{x}$ for some constant $c$ that depends only on $\mu$.
Note that either way the expression in the expectation is at least $-x$.
Therefore, if we denote
$p_{\mu} = \Pr \left[\frac{U(x)}{x} \in [\frac{1+\mu}{2}, 2\mu ] \right]$,
then
\[
    \E \left[\frac{1}{U(x)+1} \right]
    \leq \frac{1}{x+1} -
    (p_{\mu}  \cdot \frac{c}{x} + (1 - p_{\mu}) \cdot (-x))
    \leq \frac{1}{x+1},
\]
where the last inequality follows from the concentration of $U(x)$, which
implies that $p_{\mu}$ is exponentially close to $1$.
This completes the proof of the case $\mu>1$.

\subsection*{The case \texorpdfstring{$\boldsymbol{\mu=1}$}{mu equal
    to 1}:}
%We now turn to the proof of the case of $\mu=1$.
The proof for the case $\mu=1$ uses again Theorem~\ref{thm:LyapunovFosterCriterion}
with an appropriate Lyapunov function. For the recurrence case
the function we will use is $V(x)=\ln\ln(x)\to\infty$,
and for the transience we will use $V(x)=\ln^{-1}(x)\to 0$.
In both cases we use Taylor expansion of $V$ around $x$ to prove
that $V(U(x))$ satisfies the super-martingale property, namely,
that $\E[V(U(x))] \leq V(x)$ for all $x$ sufficiently large.

\subsection*{The case \texorpdfstring{$\boldsymbol{\theta(x) - 1 \ll
      \frac{1}{\ln(x)}}$}{theta smaller than 1}:}
This case is summarized in the following claim.
\begin{claim}
    Suppose that $\theta(x) < 1 + \frac{1}{\ln(x)} - \alpha(x)\cdot x^{-\half}$
    for all sufficiently large $x \in \N_0$, where $\alpha(x)$ is such that $\alpha(x)\nu(x) \to+\infty$.
    Then $\Pr[Z_n=0 \mbox{ for some } n] = 1$.
\end{claim}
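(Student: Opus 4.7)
Following the hint just above the claim, I plan to apply Theorem~\ref{thm:LyapunovFosterCriterion}(1) with the Lyapunov function $V(x)=\ln\ln(x)$ (defined arbitrarily as a positive constant for small $x$, since the criterion only requires the supermartingale condition for $x$ large). Since $V(x)\to\infty$, recurrence will follow from the estimate $\E[V(U(x))]\le V(x)$ for all sufficiently large $x$. To establish it, I Taylor-expand $V(x+Y)$, with $Y:=U(x)-x$, to second order around $x$, using
\[
V'(x)=\frac{1}{x\ln x},\qquad V''(x)=-\frac{\ln x+1}{x^2\ln^2 x},\qquad V'''(u)=O(u^{-3}).
\]

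To control the remainder, I will split the expectation on the ``typical'' event $G=\{|Y|\le x^{1/2}\log^2 x\}$. By the concentration hypothesis of Theorem~\ref{thm:Fundamental}, $\Pr[G^c]\le 2\exp(-c\log^4 x)$, which is super-polynomially small. Integrating the concentration bound also yields $\E[Y^2]=\nu(x)x$, $\E[Y^4]=O(x^2)$, and crucially $\E[|Y|^3]=O(x^{3/2})$. Using Cauchy--Schwarz, each of $\E[Y;G^c]$, $\E[Y^2;G^c]$, and $\E[V(U(x));G^c]$ (bounded crudely by $V(u)\le u+O(1)$) is of super-polynomially small order, so replacing $G$ by the whole space in the first two Taylor terms and discarding the third term on $G^c$ costs only $o(x^{-2})$.

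On $G$ the Lagrange remainder point $\xi$ lies in $[x/2,2x]$ for $x$ large, so $|V'''(\xi)|\le Cx^{-3}$ and the third-order contribution is bounded by $\frac{C}{x^3}\E[|Y|^3;G]=O(x^{-3/2})$. A direct simplification of the first two Taylor terms gives
\[
V'(x)\rho(x)+\tfrac12 V''(x)\nu(x)x
=\frac{\nu(x)}{2x\ln x}\Bigl(\theta(x)-1-\tfrac{1}{\ln x}\Bigr),
\]
where the $-\tfrac{1}{\ln x}$ correction is produced \emph{precisely} by the ``$+1$'' in the numerator of $V''(x)$. By the hypothesis, this quantity is at most $-\dfrac{\alpha(x)\nu(x)}{2\,x^{3/2}\ln x}$. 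Assembling everything,
\[
\E[V(U(x))]-V(x)\le -\frac{\alpha(x)\nu(x)}{2\,x^{3/2}\ln x}+O\!\left(\frac{1}{x^{3/2}}\right),
\]
which is non-positive for all sufficiently large $x$ exactly because $\alpha(x)\nu(x)\to\infty$, completing the verification of the Lyapunov criterion.

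The main obstacle is the bookkeeping in this last display: the third-order Taylor error is of the same order $x^{-3/2}$ (up to logarithms) as the leading negative drift, so the condition $\alpha(x)\nu(x)\to\infty$ is needed to guarantee that the drift beats the error constant coming from $\E[|Y|^3]$. Equally delicate is the derivation of the exact recurrence threshold $\theta(x)<1+\tfrac{1}{\ln x}$: the $\tfrac{1}{\ln x}$ shift originates only from the sub-leading part of $V''(x)$, which is what forces the specific choice $V=\ln\ln$ rather than, say, $\ln$ or an iterated logarithm of higher order.
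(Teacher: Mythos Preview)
Your approach is the same as the paper's --- the Lyapunov function $V(x)=\ln\ln x$ together with a third-order Taylor expansion --- and your identification of the main term
\[
V'(x)\rho(x)+\tfrac12 V''(x)\,\nu(x)x=\frac{\nu(x)}{2x\ln x}\Bigl(\theta(x)-1-\tfrac{1}{\ln x}\Bigr)
\]
is correct. There is, however, a genuine gap in the remainder estimate that breaks your final conclusion.

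You use only $V'''(u)=O(u^{-3})$, which gives a third-order error of size $O(x^{-3/2})$. But your negative drift term is $-\dfrac{\alpha(x)\nu(x)}{2x^{3/2}\ln x}$, which is \emph{smaller} by a factor of $\ln x$. So the inequality
\[
-\frac{\alpha(x)\nu(x)}{2x^{3/2}\ln x}+O\!\left(\frac{1}{x^{3/2}}\right)\le 0
\]
does \emph{not} follow from $\alpha(x)\nu(x)\to\infty$; it would require the stronger $\alpha(x)\nu(x)\gg \ln x$. Your parenthetical ``(up to logarithms)'' hides exactly the factor that you lose.

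The fix is easy: compute $V'''$ explicitly,
\[
V'''(u)=\frac{2}{u^{3}\ln u}+\frac{3}{u^{3}\ln^{2}u}+\frac{2}{u^{3}\ln^{3}u}=O\!\left(\frac{1}{u^{3}\ln u}\right),
\]
so that on $G$ one has $|V'''(\xi)|\le C/(x^{3}\ln x)$ and the third-order contribution is $O\bigl(x^{-3/2}/\ln x\bigr)$. Then
\[
\E[V(U(x))]-V(x)\le \frac{1}{x^{3/2}\ln x}\Bigl(-\tfrac12\alpha(x)\nu(x)+O(1)\Bigr),
\]
and now $\alpha(x)\nu(x)\to\infty$ genuinely suffices. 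This is precisely what the paper does (it states the remainder as $O(x^{-3/2}/\ln x)$ without spelling out the truncation argument that you, more carefully, included).
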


\begin{proof}
    We define our Lyapunov function to be $V(x) = \ln\ln(x)$.%
    \footnote{Note that $V(x)$ is not defined properly for $x \leq e$.
    We overcome this by defining $V$ in a range slightly larger that $[0,e]$
    arbitrarily, while making sure that $V$ is smooth and positive.}
    We claim that for all $x$ sufficiently large it holds that
    $\E[\ln\ln U(x)] \leq \ln\ln x$,
    which by Theorem~\ref{thm:LyapunovFosterCriterion} implies the claim.

    We state the first three derivatives of $V$, which hold for all sufficiently large values of $x$.
    \begin{eqnarray*}
        V'(x) &=& \frac{1}{x \ln(x)}        \\
        V^{(2)}(x) &=& - \frac{1}{x^2 \ln(x)} - \frac{1}{x^2 \ln^2(x)}        \\
        V^{(3)}(x) &=& \frac{2}{x^3 \ln(x)} + \frac{3}{x^3 \ln^2(x)} + \frac{2}{x^3 \ln^3(x)}.
        %V^{(4)}(x) &=& - \frac{6}{x^4 \ln(x)} - \frac{11}{x^4 \ln^2(x)} - \frac{9}{x^4 \ln^3(x)} - \frac{3}{x^4 \ln^4(x)}
    \end{eqnarray*}
%    Note that if $x$ is sufficiently large, then $V^{(3)}(x) \le C/x^3 \ln(x)$.
%    and $V^{(4)}(x) \leq 0$.
    Using $3^\textrm{rd}$ order Taylor expansion of $V$ around $x$ with Cauchy remainder we have for all large enough $x$
    \begin{align*}
        \E[\ln\ln(U)]
            & = \ln\ln(x) + V'(x) \cdot \E[U-x]
                + \frac{1}{2!} V^{(2)}(x) \cdot \E[(U-x)^2] \\
                &\quad  + \frac{1}{3!} \E[V^{(3)}(\xi) \cdot (U-x)^3],
%                + \frac{1}{4!} \E[V^{(4)}(C) \cdot (U-x)^4]
    \end{align*}
    where $\xi$ is some random value between $x$ and $U$. It is easy
    to see that the exponential concentration of $U$ implies that the
    remainder is $O(x^{-3/2}\ln^{-1}(x))$ (regardless of how $V$ is
    defined for small values of $x$).
    Inserting the definitions of $\rho(x)$ and $\nu(x)$ we have
    \[
        \E[\ln\ln(U)]
                = \ln\ln(x) + \frac{\rho(x)}{x \ln(x)}
                - \frac{x \nu(x)}{2 x^2 \ln(x)}
                - \frac{x \nu(x)}{2 x^2 \ln^2(x)} +
                O\left(\frac{x^{-3/2}}{\ln x}\right).
%                &&  + \frac{1}{3!} V^{(3)}(x) \cdot \E[(U-x)^3]
%                + \frac{1}{4!} \E[V^{(4)}(C) \cdot (U-x)^4].
    \]
%    Using the fact that $\E[V^{(4)}(C) \cdot (U-x)^4] \leq 0$
%    in order to prove that $\E[\ln\ln U(x)] \leq \ln\ln x$ it is enough to show that
%    \[
%        \frac{\rho(x)}{x \ln(x)} \leq
%        \frac{\nu(x)}{2 x \ln(x)}
%        + \frac{\nu(x)}{2 x \ln^2(x)}
%        - \frac{1}{3!} V^{(3)}(x) \cdot \E[(U-x)^3].
%    \]
    Multiplying %both sides of the inequality
    by $\smash{\frac{2 x
      \ln(x)}{\nu(x)}}$, and recalling that
    $\smash{\theta(x) = \frac{2\rho(x)}{\nu(x)}}$ we see that it is enough to
    show that %this is equivalent to showing that
    \[
        \theta(x) \leq
            1 + \frac{1}{\ln(x)} + O(x^{-1/2}).
%            - \frac{x \ln(x)}{3\nu(x)} \cdot V^{(3)}(x) \cdot \E[(U-x)^3]
    \]
%    By the concentration of $U$ we can apply
%    Proposition~\ref{prop:centralized moments} with $m=3$ and conclude that
%    $\E[(U-x)^3] = O(x^{\frac{3}{2}})$.
%    Therefore, using the fact that $V^{(3)}(x) = \Theta(\frac{1}{x^3 \ln(x)})$
%    the third term on left hand side of the last inequality is at most $O(x^{-\half})$.
    Since our assumption of $\theta$ was that $\theta(x) < 1 + \frac{1}{\ln(x)} - \alpha(x) x^{-\half}$, and $\alpha(x)\nu(x) \to+\infty$,
    the required inequality holds for all $x$ sufficiently large,
    and therefore for such values of $x$ we have
    $\E[\ln\ln U(x)] \leq \ln\ln x$, as required.
\end{proof}

\subsection*{The case \texorpdfstring{$\boldsymbol{\theta(x) - 1 \gg
      \frac{2}{\ln(x)}}$}{theta bigger than 1}:}
This case is summarized in the following claim.
\begin{claim}
    Suppose that $\theta(x) > 1 + \frac{2}{\ln(x)} + \alpha(x)\cdot x^{-\half}$
    for all sufficiently large $x \in \N_0$, where $\alpha(x)$ is such that $\alpha(x)\nu(x) \to+\infty$.
    Then $\Pr[Z_n > 0 \mbox{ for all } n] > 0$.
\end{claim}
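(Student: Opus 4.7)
The plan is to mirror the recurrence argument of the previous claim, but apply the second part of Theorem~\ref{thm:LyapunovFosterCriterion} with the Lyapunov function $V(x) = 1/\ln(x)$, extended smoothly and positively to the small range where $\ln(x)$ is not positive. Since $V(x) \to 0$ as $x \to \infty$, once we verify the supermartingale inequality $\E[V(U(x))] \leq V(x)$ for all sufficiently large $x$, the theorem gives that $Z$ is transient. Because $Z$ is irreducible on $\N_0$, transience implies $\Pr[Z_n = 0 \text{ for some } n] < 1$, and since $Z_0 = 1$ this is exactly $\Pr[Z_n > 0 \text{ for all } n] > 0$.

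The derivatives of $V$ for large $x$ are
\[
V'(x) = -\frac{1}{x\ln^2(x)}, \qquad V''(x) = \frac{1}{x^2\ln^2(x)} + \frac{2}{x^2\ln^3(x)}, \qquad V'''(x) = O\!\left(\frac{1}{x^3\ln^2(x)}\right).
\]
Using a third-order Taylor expansion around $x$ with Cauchy remainder and splitting the expectation according to whether $|U(x) - x| \leq x/2$ or not, the concentration hypothesis on $U(x)$ shows that the part where $U(x)$ is far from $x$ contributes exponentially little, while on the bulk event $\xi \in [x/2, 3x/2]$ and $\E[|U(x) - x|^3] = O(x^{3/2})$, so the remainder is $O(x^{-3/2}\ln^{-2}(x))$. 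Inserting $\rho(x) = \E[U(x) - x]$ and $x\nu(x) = \E[(U(x)-x)^2]$ (the shift between $\mu x$ and $x$ is absorbed because $\mu = 1$), we obtain
\[
\E[V(U(x))] = \frac{1}{\ln x} - \frac{\rho(x)}{x\ln^2 x} + \frac{\nu(x)}{2x\ln^2 x} + \frac{\nu(x)}{x\ln^3 x} + O\!\left(\frac{1}{x^{3/2}\ln^2 x}\right).
\]

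The supermartingale inequality $\E[V(U(x))] \leq V(x)$ is therefore equivalent, after multiplying both sides by $\tfrac{2x\ln^2(x)}{\nu(x)}$, to
\[
\theta(x) \geq 1 + \frac{2}{\ln x} + O\!\left(\frac{1}{\nu(x)\sqrt{x}}\right).
\]
Since the hypothesis reads $\theta(x) > 1 + \tfrac{2}{\ln x} + \alpha(x) x^{-1/2}$ with $\alpha(x)\nu(x) \to \infty$, we have $\alpha(x) x^{-1/2} = \tfrac{\alpha(x)\nu(x)}{\nu(x)\sqrt{x}}$ eventually dominating any fixed constant multiple of $\tfrac{1}{\nu(x)\sqrt{x}}$, so the required inequality holds for all $x$ sufficiently large. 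This completes the verification.

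The only delicate point — and the reason for the coefficient $2$ appearing in the threshold rather than $1$ as in the recurrence case — is the secondary term $\tfrac{\nu(x)}{x\ln^3 x}$ coming from the $\tfrac{2}{x^2\ln^3 x}$ summand in $V''(x)$. Unlike the recurrence computation with $V(x) = \ln\ln(x)$, here this term has the \emph{same} sign as the $\tfrac{\nu(x)}{2x\ln^2 x}$ diffusion contribution, so it reinforces it and produces a $\tfrac{2}{\ln x}$ correction instead of $\tfrac{1}{\ln x}$. The rest of the proof is bookkeeping: verifying that the Taylor remainder is subdominant to the hypothesized slack $\alpha(x)x^{-1/2}$, which is exactly the role of the factor $\nu(x)$ inside the condition $\alpha(x)\nu(x) \to \infty$.
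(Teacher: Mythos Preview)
Your proof is correct and follows essentially the same route as the paper: the Lyapunov function $V(x)=\ln^{-1}(x)$, third-order Taylor expansion with the concentration bound controlling the remainder, and the same rearrangement into an inequality for $\theta(x)$. Your treatment is in fact slightly more careful than the paper's in making the $1/\nu(x)$ factor in the $O(x^{-1/2})$ error explicit, which is exactly what the condition $\alpha(x)\nu(x)\to\infty$ is designed to absorb.
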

\begin{proof}
    We define our Lyapunov function to be $V(x) = \ln^{-1}(x)$.%
    \footnote{Just like in the previous case $V(x)$ is not defined in $x=1$,
    and it is not positive for $x<1$.
    Again, we overcome this by defining $V$ in the interval $[0,2]$
    arbitrarily, while making sure that $V$ is smooth and positive.}
    We claim that for all $x$ sufficiently large it holds that
    $\E[\ln^{-1}(U(x))] \leq \ln^{-1}(x)$,
    which by Theorem~\ref{thm:LyapunovFosterCriterion} implies the claim.

    We state the first three derivatives of $V$, which hold for all sufficiently large values of $z$.
    \begin{eqnarray*}
        V'(x) &=& - \frac{1}{x \ln^2(x)}        \\
        V^{(2)}(x) &=& \frac{1}{x^2 \ln^2(x)} + \frac{2}{x^2 \ln^3(x)}      \\
        V^{(3)}(x) &=& - \frac{2}{x^3 \ln^2(x)} - \frac{6}{x^3 \ln^3(x)} - \frac{6}{x^3 \ln^4(x)}.
    \end{eqnarray*}
    Using $3^\textrm{rd}$ order Taylor expansion of $V$ around $x$ with Cauchy remainder we have
    \begin{align*}
        \E[\ln^{-1}(U)]
            & = \ln^{-1}(x) + V'(x) \cdot \E[U-x]
                + \frac{1}{2!} V^{(2)}(x) \cdot \E[(U-x)^2]\\
                &\quad + \frac{1}{3!} \E[V^{(3)}(\xi) \cdot (U-x)^3]
    \end{align*}
    for some random $\xi$ between $x$ and $U$. As before the
    exponential concentration of $U$ gives that the error is $O(x^{-3/2}\ln^{-2}(x))$.
    %The following claim easily follows from the concentration of $U$,
    %and its proof is omitted.
    %\begin{claim}
    %    For all $x$ sufficiently large and for any $C$ between $x$ and $U$
    %    it holds that
    %    \[
    %        \E[V^{(3)}(C) \cdot  (U-x)^3] \leq
    %        O \left( \frac{1}{x^{\frac{3}{2}} \ln^2(x)} \right).
    %    \]
    %\end{claim}
    By the definition of $\rho(x)$ and $\nu(x)$ we have
\[
        \E[\ln^{-1}(U)]
          = \ln^{-1}(x) - \frac{\rho(x)}{x \ln^2(x)}
            + \frac{x\nu(x)}{2x^2 \ln^2(x)}
            + \frac{x\nu(x)}{x^2 \ln^3(x)}
            +O\left(\frac{x^{-3/2}}{\ln^2(x)}\right).
\]
%
%    \begin{eqnarray*}
%        \E[\ln^{-1}(U)]
%                & = & \ln^{-1}(x) - \frac{\rho(x)}{x \ln^2(x)} \\
%                && + \frac{x\nu(x)}{2x^2 \ln^2(x)}
%                    + \frac{x\nu(x)}{x^2 \ln^3(x)} \\
%                &&  + \frac{1}{3!} \cdot \E[V^{(3)}(C) \cdot (U-x)^3]
%    \end{eqnarray*}
    Therefore, in order to prove that $\E[\ln^{-1}(U(x))] \leq \ln^{-1}(x)$ it is enough to show that
    \[
        \frac{\rho(x)}{x \ln^2(x)} \geq
        \frac{\nu(x)}{2 x \ln^2(x)}
        + \frac{\nu(x)}{x \ln^3(x)}
          +O\left(\frac{x^{-3/2}}{\ln^2(x)}\right).
          %+ \frac{1}{3!} \E[V^{(3)}(C) \cdot (U-x)^3]
    \]
    Multiplying both sides of the inequality by $\frac{2 x \ln^2(x)}{\nu(x)}$, and substituting
    $\theta(x) = \frac{2\rho(x)}{\nu(x)}$ this is equivalent to showing that
    \[
        \theta(x) \geq
            1 + \frac{2}{\ln(x)}
             + O(x^{-1/2}).%\frac{x \ln^2(x)}{3\nu(x)} \cdot \E[V^{(3)}(C) \cdot (U-x)^3].
    \]
%    By plugging $\E[V^{(3)}(C) \cdot  (U-x)^3] \leq O \left( \frac{1}{x^{\frac{3}{2}} \ln^2(x)} \right)$
%    from the last claim, we get that the third term on the right hand side is at most $O(x^{-\half})$.
    Therefore, if $\theta(x) > 1 + \frac{2}{\ln(x)} + \alpha(x)x^{-\half}$ for some $\alpha(x)$ such that $\alpha(x)\nu(x) \to+\infty$, then
    the above inequality holds for all large enough $x$. The claim,
    and hence also Theorem~\ref{thm:Fundamental}, follow.
\end{proof}

\end{document}